\newtheorem{claim}{Claim}
\newtheorem{theorem}{Theorem}[section]
\newtheorem{corollary}[theorem]{Corollary}
\newtheorem{conjecture}[theorem]{Conjecture}
\newtheorem{problem}[theorem]{Problem}
\newtheorem{question}[theorem]{Question}
\newcommand{\set}[1]{\ensuremath{\left\{#1 \right\}}}
\newcommand{\chis}[1]{\ensuremath{\chi_{s}'(#1)}}
\newcommand{\chils}[1]{\ensuremath{\chi_{s,l}'(#1)}}
\newcommand{\chin}[1]{\ensuremath{\chi_{n}'(#1)}}
\newcommand{\chiln}[1]{\ensuremath{\chi_{n,l}'(#1)}}
\newcommand{\coef}[2]{\ensuremath{\mathrm{coef}(#1; \ #2)}}
\newenvironment{proofclaim}[1][]%
    {\noindent \emph{Proof.} {}{#1}{}}{$~$\hfill $~\blacklozenge$ \vspace{0.2cm}}
\newcommand{\claimqed}{$~$\hfill $~\blacklozenge$ \vspace{0.2cm}}
\definecolor{defblue}{rgb}{0.4,0,0.84}
\definecolor{greyblue}{rgb}{0.23,0.4,0.70}
\definecolor{orange}{rgb}{1.0,0.5,0.2}
\definecolor{violet}{rgb}{0.55,0,0.55}
\g@addto@macro{\UrlBreaks}{\UrlOrds}
\newcolumntype{Y}{>{\centering\arraybackslash}X}
\renewcommand{\arraystretch}{2}
\begin{document}

\title{{\bf List strong and list normal edge-coloring of (sub)cubic graphs}}

\author
{
	Borut Lu\v{z}ar\thanks{Faculty of Information Studies in Novo mesto, Slovenia.} \thanks{Rudolfovo - Science and Technology Centre Novo mesto, Slovenia.} \quad	
	Edita M\'{a}\v{c}ajov\'{a}\thanks{Comenius University, Faculty of Mathematics, Physics and Informatics, Bratislava, Slovakia.} \quad	
	Roman Sot\'{a}k\thanks{Pavol Jozef \v Saf\'{a}rik University, Faculty of Science, Ko\v{s}ice, Slovakia.} \quad
	Diana \v{S}vecov\'{a}\footnotemark[4] \quad
}

\maketitle

{
\begin{abstract}
	A {\em strong edge-coloring} of a graph is a proper edge-coloring, 
	in which the edges of every path of length $3$ receive distinct colors;
	in other words, every pair of edges at distance at most $2$ must be colored differently.
	The least number of colors needed for a strong edge-coloring of a graph is 
	the {\em strong chromatic index}.
	We consider the list version of the coloring and prove 
	that the list strong chromatic index of graphs with maximum degree $3$ is at most $10$.
	This bound is tight and improves the previous bound of $11$ colors.
	
	We also consider the question whether the strong chromatic index and the list strong chromatic index always coincide.
	We answer it in negative by presenting an infinite family of graphs for which the two invariants differ.	
	For the special case of the Petersen graph, we show that its list strong chromatic index equals $7$,
	while its strong chromatic index is $5$.
	Up to our best knowledge, this is the first known edge-coloring for which there are graphs 
	with distinct values of the chromatic index and its list version.
	
	In relation to the above, 
	we also initiate the study of the list version of the normal edge-coloring.
	A {\em normal edge-coloring} of a cubic graph is a proper edge-coloring, 
	in which every edge is adjacent to edges colored with $4$ distinct colors
	or to edges colored with $2$ distinct colors.
	It is conjectured that $5$ colors suffice for a normal edge-coloring of any bridgeless cubic graph
	and this statement is equivalent to the Petersen Coloring Conjecture.	

	It turns out that similarly to strong edge-coloring, 
	list normal edge-coloring is much more restrictive and consequently 
	for many graphs the list normal chromatic index is greater than the normal chromatic index.
	In particular, we show that there are cubic graphs with list normal chromatic index at least $9$,
	there are bridgeless cubic graphs with its value at least $8$,
	and there are cyclically $4$-edge-connected cubic graphs with value at least $7$.
\end{abstract}
}

\medskip
{\noindent\small \textbf{Keywords:} strong edge-coloring, list strong edge-coloring, normal edge-coloring, list normal edge-coloring, 
	Petersen coloring, Petersen coloring conjecture}

\section{Introduction}

A {\em strong edge-coloring} of a graph $G$ is a proper edge-coloring in which the edges at distance at most $2$ receive distinct colors.
Here, we define the {\em distance between two edges} in a graph $G$, 
as the distance between their corresponding vertices in the line graph of $G$;
thus, two adjacent edges are at distance $1$, and two non-adjacent edges, which are adjacent to a common edge, are at distance $2$.
The least number of colors for which $G$ admits a strong edge-coloring is called the {\em strong chromatic index}, 
and is denoted by $\chis{G}$. 

In 1985, Erd\H{o}s and Ne\v{s}et\v{r}il~\cite{Erd88} proposed the following conjecture;
in 1990, it was updated to its current form by Faudree et al.~\cite{FauGyaSchTuz90}, 
in order to fit the graphs with an even or odd maximum degree.

\begin{conjecture}[Erd\H{o}s, Ne\v{s}et\v{r}il~\cite{Erd88}]
	\label{conj:Erdos}
	The strong chromatic index of an arbitrary graph $G$ satisfies
	$$	
			\arraycolsep=1.4pt\def\arraystretch{1.2}
			\chi_s'(G) \le \left \{
	\begin{array}{cl}
			\tfrac{5}{4} \Delta(G)^2\,, &\quad \textrm{if }
			\Delta(G) \textrm{ is even}\\
			\tfrac{1}{4}(5\Delta(G)^2 - 2\Delta(G) +1)\,, &\quad \textrm{if }
			\Delta(G) \textrm{ is odd.}
	\end{array} \right.	
	$$
\end{conjecture}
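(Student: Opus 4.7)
The Erdős--Nešetřil conjecture has been open since 1985, so the outline below describes a program rather than a short proof. The conjecture is equivalent to bounding the chromatic number of the graph $L(G)^2$ on edge set $E(G)$ in which two edges are joined whenever they are at distance at most $2$ in $G$. Since $\Delta(L(G)^2)\le 2\Delta(\Delta-1)$, the greedy bound immediately gives $\chis{G}\le 2\Delta^2-2\Delta+1$, whereas the conjecture asks for roughly $\tfrac{5}{4}\Delta^2$. Closing this gap requires exploiting either the sparsity of $L(G)^2$ or, in the dense regime, its highly restricted structure.

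The plan is a sparse/dense dichotomy on edges. For each edge $e=uv$, let $N_2(e)$ denote the set of edges at distance at most $2$ from $e$. Call $e$ \emph{sparse} if $N_2(e)$ induces in $L(G)^2$ a subgraph with many non-edges, and \emph{dense} otherwise. On the sparse part, one would apply a semi-random (nibble) method or a Johansson-type entropy-compression argument: as long as the local clustering of $L(G)^2$ is bounded away from $1$, one obtains a coloring using $(1-\varepsilon)\cdot 2\Delta^2$ colors. The dense part is more delicate: the key observation is that $N_2(e)$ can be close to a clique only when $e$ sits in a configuration that resembles a blow-up of $C_5$, which is the extremal example witnessing tightness of the conjecture for even $\Delta$. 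The dense edges would then be colored by partitioning them into a bounded number of near-$C_5$-blow-ups, on each of which an explicit strong edge-coloring with $\tfrac{5}{4}\Delta^2$ colors can be written down.

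The hardest step will be merging the two partial colorings without conflicts across the boundary. Current quantitative results of this flavour lose a constant factor precisely here, because the palettes used on the sparse and dense parts must be kept sufficiently disjoint to absorb adjacencies between them, which is why the best known upper bounds have constants near $1.77$ rather than $\tfrac{5}{4}$. Removing this loss seems to require a robust stability theorem of the form: any squared line graph whose $2$-neighborhoods are almost cliques in fact arises from a near-blow-up of $C_5$, and all such near-extremal graphs admit an explicit $\tfrac{5}{4}\Delta^2$-coloring. As intermediate milestones I would settle small fixed values of $\Delta$ via discharging (replacing the probabilistic step), noting that the present paper already realises the tight value $10$ for $\Delta=3$ even in the stronger list setting, giving the first nontrivial case of the conjecture.
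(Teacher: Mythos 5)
This statement is the Erd\H{o}s--Ne\v{s}et\v{r}il conjecture, which the paper records only as a conjecture (attributed to \cite{Erd88}, refined in \cite{FauGyaSchTuz90}); the paper contains no proof of it, and none is known --- the best general bound cited there is $1.772\Delta(G)^2$ for large $\Delta$ \cite{HurJoaKan21}. Your submission is, by your own framing, a research program rather than a proof, and it cannot be accepted as a proof of the statement: none of its three pillars is actually established. The sparse/dense dichotomy is only described qualitatively (``many non-edges'') with no quantitative threshold; the semi-random/nibble step, even in its strongest published forms, yields bounds of the shape $(2-\varepsilon)\Delta^2$ for a small explicit $\varepsilon$, far from $\tfrac54\Delta^2$, so invoking it does not close the gap; the claimed stability statement --- that an edge whose $2$-neighborhood in $L(G)^2$ is nearly a clique must sit inside a near-blow-up of $C_5$, and that all such configurations admit an explicit $\tfrac54\Delta^2$-coloring --- is precisely the hard open structural problem, asserted here without argument; and you yourself concede that the merging of the sparse and dense palettes is where all known arguments lose a constant factor. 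In short, every step that would distinguish a proof from the current state of the art is left as a placeholder.

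Two factual corrections. First, the case $\Delta=3$ (value $10$) of the conjecture is not first realised in the present paper: it was settled by Andersen \cite{And92} and by Hor\'ak, Qing and Trotter \cite{HorQinTro93}; this paper's contribution is the \emph{list} strengthening (Theorem~\ref{thm:lst10}). Second, be careful with the greedy count: each edge has at most $2(\Delta-1)$ neighbours and at most $2(\Delta-1)^2$ vertices at distance exactly $2$ in $L(G)^2$, so greedy gives roughly $2\Delta^2-2\Delta+1$ as you say, but this is an upper bound on the clique-number-style obstruction only in the crude sense; any serious attack must work with the structure of $L(G)^2$, exactly as you indicate, which is why the statement remains open.
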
\noindent
We are still far from resolving the conjecture in general as the best known upper bound is $1.772\Delta(G)^2$ 
(provided that $\Delta(G)$ is large enough) due to Hurley et al.~\cite{HurJoaKan21}.
However, when limited to graphs of small maximum degree, we know a bit more;
e.g., for graphs with maximum degree $3$ (we refer to them as to {\em subcubic graphs}), the tight upper bound is established.
\begin{theorem}[Andersen~\cite{And92}; Hor\'{a}k, Qing, and Trotter~\cite{HorQinTro93}]
	For any subcubic graph $G$, it holds that
	$$
		\chis{G} \le 10\,.
	$$
\end{theorem}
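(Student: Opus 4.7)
The plan is to argue by contradiction via a minimum counterexample. Suppose $G$ is a subcubic graph with $\chis{G} > 10$ having the fewest edges among all such graphs; then every proper subgraph of $G$ admits a strong edge-coloring with $10$ colors. First I would rule out easy structural features: a vertex of degree at most $1$ is immediately removable (color the rest by minimality, then extend), so $\delta(G) \ge 2$. An analogous counting argument, comparing the at most $10$ forbidden colors around any edge incident to a $2$-vertex with the available palette of size $10$, rules out vertices of degree $2$ as well, so $G$ must be cubic.

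Next I would fix an arbitrary edge $e=uv$, delete it, and by minimality obtain a strong edge-coloring $\varphi$ of $G-e$ with colors in $\{1,\dots,10\}$. The only edges whose colors forbid a color for $e$ are those at distance at most $2$ from $e$. Since $u$ has exactly two further neighbors $u_1,u_2$ and $v$ has exactly two further neighbors $v_1,v_2$, the forbidden edges are the four edges $uu_1,uu_2,vv_1,vv_2$ together with the (at most) eight edges incident to $u_1,u_2,v_1,v_2$ different from the four just listed. In total there are at most $12$ such edges, but only $10$ colors are available, so if one could show that these $12$ edges use at most $10$ distinct colors, some color would remain free for $e$, contradicting the choice of $G$. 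Hence one may assume that the colors around $e$ use at least $11$ distinct values, and the goal is to derive a contradiction in every such configuration.

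From here I would proceed by a careful case analysis driven by how the colors $\varphi(uu_1),\varphi(uu_2),\varphi(vv_1),\varphi(vv_2)$ relate to the colors on the (at most eight) edges at distance $2$ from $e$. In order to block all $10$ colors from $e$, the four edges at $u,v$ together with their outer distance-$2$ neighbors must cover $\{1,\dots,10\}$ with very few repetitions, which imposes strong restrictions on how the colors can be distributed. Whenever such a blocking distribution occurs, I would recolor one of the edges $uu_i$ or $vv_j$, using a Kempe-chain style alternating-path argument adapted to the strong-coloring distance condition, to liberate a color for $e$ and yield the desired contradiction.

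The hard part is exactly this recoloring step. Changing the color of, say, $uu_1$ requires that the new color is legal not only for $uu_1$ itself but also for every edge within distance $2$ of $uu_1$, which may involve edges incident to $u_2$, to $v$, and to the other neighbor of $u_1$. In the worst case, alternating chains have to be tracked carefully, and a number of symmetric subcases arise from the configurations of colors on the twelve nearby edges. Handling all these cases uniformly, and ruling out short cycles or multi-edges that would simultaneously obstruct every possible recoloring, is where the bulk of the technical work lies; this is the reason why the original proofs of Andersen and of Hor\'ak, Qing, and Trotter require a lengthy structural analysis beyond the short outline above.
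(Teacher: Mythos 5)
The statement you are asked to prove is quoted in the paper only as a known theorem of Andersen and of Hor\'ak, Qing and Trotter; the paper gives no proof of it, so your argument has to stand on its own, and it does not: what you have written is a strategy, not a proof. After reducing to a cubic minimum counterexample and deleting an edge $e=uv$, you correctly note that at most $12$ edges lie within distance $2$ of $e$, but the pigeonhole bookkeeping then goes astray: with a palette of only $10$ colors the nearby edges can never use ``at least $11$ distinct values''; the bad case is precisely that all $10$ colors appear among those at most $12$ edges, and the theorem amounts to showing this situation can always be avoided or repaired. That repair --- your ``Kempe-chain style'' recoloring together with the case analysis over how colors are distributed on the four edges at $u,v$ and the at most eight edges at distance $2$ --- is exactly the substance of the result, and you explicitly leave it undone. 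Nothing in the outline guarantees that an alternating-path recoloring compatible with the distance-$2$ constraint exists; ordinary Kempe chains do not respect strong-coloring constraints, which is one reason the original proofs require a lengthy structural analysis. So the central step of your argument is a genuine gap, not merely an omitted routine verification.

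The preliminary reductions are fine: an edge incident to a vertex of degree at most $2$ has at most $9$ edges within distance $2$, so by minimality of the counterexample it can always be colored, and hence a minimum counterexample is cubic. From that point on, however, the proposal only names the difficulty rather than resolving it. To turn this into a proof you would need either to reproduce the detailed configuration analysis of Andersen or of Hor\'ak, Qing and Trotter, or to supply a genuinely new argument --- for instance a discharging scheme, or an algebraic argument in the spirit of the Combinatorial Nullstellensatz computations used elsewhere in this paper --- that disposes of the case in which all $10$ colors occur in the $2$-edge-neighborhood of the deleted edge.
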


There are only two known connected bridgeless subcubic graphs that need $10$ colors for a strong edge-coloring:
the Wagner graph (the left graph in Figure~\ref{fig:wagner}) and the complete bipartite graph $K_{3,3}$ with one subdivided edge.
Moreover, there is also no known connected bridgeless subcubic graph on more than $12$ vertices with the strong chromatic index more than $8$,
and based on that also the following, stronger conjecture was proposed.
\begin{conjecture}[Lu\v{z}ar, M\'{a}\v{c}ajov\'{a}, \v{S}koviera, and Sot\'{a}k~\cite{LuzMacSkoSot22}]
	\label{con:str8}
	For any connected bridgeless subcubic graph $G$ on at least $13$ vertices, it holds that
	$$
		\chis{G} \le 8\,.
	$$
\end{conjecture}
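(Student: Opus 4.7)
The plan is a minimum counterexample argument combined with discharging. Assume for contradiction that $G$ is a connected bridgeless subcubic graph on at least $13$ vertices with $\chis{G} \ge 9$, chosen to minimise $|V(G)|$ (and, secondarily, $|E(G)|$). The first step is to reduce to the cubic case: a vertex of degree $1$ would give a bridge, while a vertex of degree $2$ can be suppressed (or its incident edges replaced by a single edge) to produce a smaller bridgeless subcubic graph $G'$, whose strong $8$-edge-colouring, guaranteed by minimality, extends to $G$ because only a bounded number of colours are forbidden at the re-inserted edges.

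Next I would build a library of reducible configurations. Each edge of a cubic graph has at most $12$ other edges at distance at most $2$, so a naive counting bound only gives $\chis{G} \le 13$; to close the gap down to $8$, one must exhibit enough local repetition of colours. Concretely, for configurations such as a triangle, two triangles joined by an edge, two $4$-cycles sharing an edge, or a pair of short cycles connected by a bounded path, I would remove a small subgraph, apply minimality to the resulting bridgeless subcubic quotient, and show that on each uncoloured edge at most $7$ of the $8$ colours are forbidden, so the colouring extends. Together these reductions should force $G$ to have large girth and a sparse distribution of short cycles.

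The third step is a discharging argument on $G$ itself. Since cubic graphs admit no natural Euler-type identity, one would assign initial charges through a global invariant---for instance girth, the number of short cycles through each vertex, or a count relative to a fixed $2$-factor---chosen so that the total initial charge is negative. Redistribution rules then move charge across short distances, and the absence of every reducible configuration must leave each vertex non-negative at the end, yielding a contradiction.

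The main obstacle, and the reason the statement is posed as a conjecture rather than a theorem, is the very specific threshold of $13$ vertices. Since the two known graphs with strong chromatic index $10$ live on $7$ and $8$ vertices, there is no a priori reason to expect the jump down to $8$ colours to occur exactly above $12$ vertices; the bound will almost certainly require a finite, probably computer-assisted, case check of graphs on $9$ to $12$ vertices to rule out sporadic counterexamples, combined with a discharging argument considerably finer than those available in the current literature for strong edge-colouring of cubic graphs. It is quite possible that a genuinely new structural tool, perhaps tied to normal edge-colouring or the Petersen-colouring framework introduced later in the paper, will be needed to complete the argument.
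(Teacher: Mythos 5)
This statement is not a theorem of the paper at all: it is an open conjecture (attributed to Lu\v{z}ar, M\'a\v{c}ajov\'a, \v{S}koviera and Sot\'ak~\cite{LuzMacSkoSot22}), and the paper offers no proof of it. Your text, by your own admission in the last paragraph, is a programme rather than a proof, so there is nothing to compare it against except to note that it does not close the gap either. The concrete missing pieces are exactly where the difficulty lies. First, the reduction to the cubic case is not sound as stated: suppressing a degree-$2$ vertex can create a bridge or a parallel edge, can drop the order below $13$, and the ``only a bounded number of colours are forbidden'' claim is not enough with only $8$ colours available --- an edge incident to a degree-$2$ vertex in a subcubic graph can have up to $9$ edges in its $2$-edge-neighbourhood, so a greedy extension of a strong $8$-edge-colouring of the reduced graph need not exist. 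Second, none of the proposed reducible configurations is actually verified; since a single edge of a cubic graph can see $12$ other edges within distance $2$, every such verification requires exhibiting forced colour repetitions, and this is precisely the unsolved core of the problem (even the best known bound for subcubic graphs is $10$, and no proof of $9$, let alone $8$, is known for large bridgeless subcubic graphs). Third, the discharging step is left entirely unspecified: cubic graphs have constant average degree, so there is no Euler-type surplus to discharge, and the ``global invariant with negative total charge'' you invoke is not defined, let alone shown to interact correctly with the (unverified) reducible configurations. Finally, the threshold of $13$ vertices, which exists to exclude the known extremal examples such as the Wagner graph and the subdivided $K_{3,3}$, would require a separate finite verification that you do not carry out. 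In short, the proposal is a reasonable description of how one might attack the conjecture, but it contains no completed step beyond what is already folklore, and the statement remains open.
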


On the other hand, the lower bound of $5$ colors for the strong chromatic index of cubic graphs (i.e., $3$-regular graphs)
is attained precisely by the covers of the Petersen graph~\cite{LuzMacSkoSot22}.

\subsection{List strong edge-coloring}

Our research reported in this paper revolves about the list version of the strong edge-coloring of subcubic graphs.
We say that $L$ is a {\em list assignment} for a graph $G$ 
if it assigns a list $L(e)$ of possible colors to each edge $e$ of $G$.
If $G$ admits a strong edge-coloring $\varphi$ such that $\varphi(e) \in L(e)$ for all edges in $E(G)$, 
then we say that $G$ is {\em strong $L$-edge-colorable} or that $\varphi$ is a {\em strong $L$-edge-coloring} of $G$.
The graph $G$ is {\em strong $k$-edge-choosable} if it is strong $L$-edge-colorable for every list assignment $L$, 
where $|L(e)| \ge k$ for every $e \in E(G)$. 
The {\em list strong chromatic index} $\chils{G}$ of $G$ is the least $k$ such that $G$ is strong $k$-edge-choosable.

For graphs with small maximum degrees, a number of results are already known.
Hor{\v{n}}{\'a}k and Wo{\'z}niak~\cite{HorWoz12} showed that for any cycle,
its list strong chromatic index and its strong chromatic index coincide.
Dai et al.~\cite{DaiWanYanYu18} proved that the list strong chromatic index of subcubic graphs is at most $11$, 
and at most $10$ in the case of subcubic planar graphs.
The latter result was later extended to toroidal graphs by Pang et al.~\cite{PanHuoCheWan21}.
For graphs of maximum degree $4$ it was shown that the list strong chromatic index is at most $22$~\cite{ZhaChaHuMaYan20},
and at most $19$ in the case of planar graphs~\cite{CheHuYuZho19}.

In this paper, we generalize the results from~\cite{DaiWanYanYu18} and~\cite{PanHuoCheWan21}
by establishing a tight upper bound for subcubic graphs.
\begin{theorem}
	\label{thm:lst10}
	For any subcubic graph $G$, it holds that
	$$
		\chils{G} \le 10\,.
	$$	
\end{theorem}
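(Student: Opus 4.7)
The plan is to argue by contradiction on the number of edges. Let $G$ be a subcubic graph with the minimum number of edges that is not strong $10$-edge-choosable, and fix a ``bad'' list assignment $L$ with $|L(e)| = 10$ for every edge $e$. I would follow the general scheme underlying the proof of the non-list bound $\chi_s'(G) \le 10$ due to Andersen and Hor\'ak--Qing--Trotter: identify a catalogue of reducible configurations, verify that none can appear in $G$, and then show that every subcubic graph must nevertheless contain one of them.

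The first step would be to reduce to the cubic case. If some vertex $v$ of $G$ has degree at most $2$ and $e$ is an edge incident to $v$, a direct count gives that the number of edges at distance at most $2$ from $e$ in $G$ is at most $9$. By the minimality of $G$, the graph $G-e$ admits a strong $L$-edge-coloring $\varphi$; the set of colors forbidden for $e$ under $\varphi$ then has size at most $9 < 10 = |L(e)|$, so $\varphi$ extends to $e$, contradicting the choice of $G$. Hence $G$ is cubic.

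In the cubic case every edge has up to $12$ edges at distance at most $2$, so removing a single edge and extending greedily is no longer sufficient. I would fix a list of \emph{reducible configurations} $H \subseteq G$ — short cycles, pairs of nearby triangles, and specific small substructures on two or three vertices — together with a carefully chosen set $S \subseteq E(H)$ of edges to uncolor. For each such $H$, I would verify two properties: (i) $G \setminus S$ is subcubic and has fewer edges, hence is strong $L$-edge-colorable by minimality; and (ii) every strong $L$-edge-coloring of $G \setminus S$ extends to a strong $L$-edge-coloring of $G$. Property (ii) follows by a direct counting argument when $|S|=1$, because the structure of $H$ forces enough repetitions among the colors appearing on edges at distance at most $2$ from the missing edge so that at most $9$ colors are forbidden. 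When $|S| \ge 2$, I would invoke a system-of-distinct-representatives argument on the residual lists, i.e., verify Hall's condition for the bipartite incidence between $S$ and the colors that remain available. Finally, a structural or discharging argument on the neighborhood of a vertex would show that every cubic graph contains at least one configuration from the catalogue, producing the sought contradiction.

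The main obstacle is the tightness of the extension step: lowering the bound from $11$ to $10$ leaves essentially zero or one free colors in most configurations, so the precise choice of $H$ and of $S$ is delicate and forces a finer reducibility analysis than the one of Dai et al. A further subtlety is that, unlike the non-list setting, Kempe-chain swaps and recolorings with pre-chosen colors are no longer available, so several classical reductions must be replaced by uncoloring additional edges and verifying Hall's condition on the resulting list system. Proving that the SDR condition holds for \emph{every} admissible list assignment, across the whole catalogue of reducible configurations, is where most of the technical effort will lie.
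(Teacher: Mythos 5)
Your first step (reducing to the cubic case by a $\le 9$ count around an edge incident to a vertex of degree at most $2$) is sound and matches the paper, which imports this together with the exclusion of $3$-, $4$- and $5$-cycles from the lemmas of Dai et al., since those lemmas already work with lists of size $10$. The genuine gap is in your core extension mechanism for the remaining configurations. For $|S|=1$ in a cubic graph an uncolored edge sees up to $12$ colored edges in $N_2(e)$, and, as you yourself note, in the list setting you cannot force repetitions among their colors by recoloring, so single-edge reductions give essentially nothing. For $|S|\ge 2$ you propose to verify Hall's condition for an SDR between the uncolored edges and their residual lists, but an SDR assigns pairwise \emph{distinct} colors to all edges of $S$, which is both stronger than needed (edges of $S$ at distance at least $3$ may share a color) and, for exactly the configurations that must be reduced, impossible: in the paper's $6$-cycle reduction there are $11$ uncolored edges, some with only $3$ residual colors, and all original lists may be the same $10$-set, so the union of the residual lists has size at most $10 < 11$ and Hall's condition necessarily fails even though a strong extension exists. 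The same happens for a shortest cycle of length $k\ge 7$ with its $2k$ uncolored edges. So the SDR verification you defer to ``technical effort'' is not merely delicate; it cannot succeed, and some strictly stronger tool is required at precisely this point.

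The paper's actual route supplies that tool: after the borrowed claims it shows that a minimal counterexample contains no $6$-cycle and then no cycle at all, by deleting the vertices of a shortest cycle, coloring the rest by minimality, forming the conflict graph polynomial on the uncolored cycle and pendant edges, and exhibiting a top-degree monomial with nonzero coefficient whose exponents are strictly below the residual list sizes (computed with computer algebra, and for general $k$ by an inductive factor-by-factor decomposition of the polynomial); the Combinatorial Nullstellensatz (Theorem~\ref{thm:null}) then yields the extension, exploiting exactly the permitted repetitions that an SDR forbids. Hall's theorem (Theorem~\ref{thm:hall}) is only adequate when the conflict graph on the uncolored edges is essentially complete, which is not the case here. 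Since a $3$-regular graph with no cycle does not exist, no discharging or unavoidability catalogue is needed either. To repair your plan you would have to replace the Hall step by an argument of Nullstellensatz (or kernel/orientation) strength on the conflict graph of each multi-edge configuration, at which point you would essentially be reconstructing the paper's proof.
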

As in the non-list version, only
the Wagner graph and the complete bipartite graph $K_{3,3}$ with one subdivided edge
are known to attain the upper bound.

\bigskip
The second question regarding the list strong edge-coloring 
is whether the values of the list strong chromatic index and the strong chromatic index of subcubic graphs coincide.
In particular, we are interested in a special case of the question proposed by Dai et al.~\cite{DaiWanYanYu18}.
\begin{question}[Dai, Wang, Yang, and Yu~\cite{DaiWanYanYu18}, Question~4.1]
	\label{que:dai}
	Is it true that for any graph $G$, it holds that
	$$
		\chils{G} = \chis{G}\,?
	$$
\end{question}

The motivation for the question comes from the List (Edge) Coloring Conjecture stating 
that the values of the chromatic index and the list chromatic index of any graph coincide.
The conjecture was stated independently by several researchers (see~\cite[Problem~12.20]{JenTof95} for more details)
and in general it is still widely open; cf., e.g.,~\cite{BonDelLanPos24} for a short survey.

In other words, the List Coloring Conjecture states that 
the chromatic number of any line-graph is equal to its list chromatic number, 
which is not true for graphs in general. 
Therefore, it seems that the structural properties of line-graphs are the ones that guarantee 
the equality of the two invariants.
One can thus ask what are other structural properties of graphs that would also guarantee equality.
In this sense, Kostochka and Woodall~\cite{KosWoo01} conjectured that
the chromatic number and the list chromatic number are equal for every
square graph, where the {\em square graph $G^2$} is obtained from a graph $G$ by connecting all pairs of vertices at distance $2$.
The conjecture was refuted in general by Kim and Park~\cite{KimPar15}, but it is open for specific graph classes;
for example, whether the two chromatic numbers are equal for the squares of line graphs.
Since a (list) strong edge-coloring of a graph $G$ is exactly a (list) coloring of vertices of the square of the line graph of $G$,
the Question~\ref{que:dai} asks exactly that.
We answer it in negative by presenting an infinite family of graphs $G$ for which $\chils{G} > \chis{G}$.
\begin{theorem}
	\label{thm:infi}
	There is an infinite family of connected cubic graphs $G$ with
	$$
		\chis{G} = 5 \quad {\rm~and~} \quad \chils{G} > 5\,.
	$$
\end{theorem}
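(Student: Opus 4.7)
My plan is two-stage. First, I would establish that the Petersen graph $P$ itself satisfies $\chils{P} > 5$, which, together with the fact (from the introduction) that $\chis{P}=5$, provides a single witness. Second, I would use $P$ as a seed to build an infinite family.

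For the first stage, I exploit the rigidity of strong 5-edge-colorings of $P$. Any color class in such a coloring is a strong (induced) matching, and since $|E(P)|=15$ and five colors are used, each class must be a strong matching of size exactly $3$. These subsets of $E(P)$ are easy to enumerate, and consequently the number of strong 5-edge-colorings up to relabeling of the colors and the action of $\mathrm{Aut}(P)$ is small. I would then construct an explicit list assignment $L$ with $|L(e)|=5$ for every edge $e\in E(P)$, chosen to eliminate all of these partitions simultaneously. This is a finite combinatorial search, which I would carry out by case analysis, reducing the workload via the automorphism group of $P$.

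For the second stage, I appeal to the characterization cited in the introduction: the cubic graphs attaining $\chis=5$ are exactly the covers of $P$. For each $n\ge 2$, I pick a connected cubic $n$-fold cover $\pi_n\colon G_n \to P$, for instance a cyclic voltage cover with girth large enough that distances at most $2$ lift faithfully. Then $\chis{G_n}=5$, because any strong 5-edge-coloring of $P$ pulls back along $\pi_n$ to one of $G_n$. To attack $\chils{G_n}>5$, I transfer the bad list via $L_n(e'):=L(\pi_n(e'))$.

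The main obstacle is proving that no strong $L_n$-edge-coloring of $G_n$ exists, since an $L_n$-coloring need not be constant on fibers of $\pi_n$ and hence does not directly descend to an $L$-coloring of $P$. I would handle this in one of two ways: (a) restrict to normal covers carrying a fiber-transitive deck group and use an orbit/averaging argument to produce a genuine $L$-coloring of $P$ from any $L_n$-coloring of $G_n$; or (b) argue locally, showing that on a ball of radius $2$ around a distinguished fiber the coloring is forced to agree on all fiber-mates, thereby inducing a forbidden pattern on $P$. Either route pushes the obstruction down to $P$ and contradicts the first stage, completing the proof.
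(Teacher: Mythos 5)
Your stage~2 is where the proposal breaks down, and the difficulty you yourself flag is not resolved by either of your suggested fixes. Pulling back a bad list $L$ along $\pi_n$ and hoping to push a hypothetical strong $L_n$-coloring of $G_n$ back down to $P$ is exactly the step that fails in general: colorability of a cover from fiber-constant lists does not descend to the base (compare the bipartite double cover of an odd cycle, which is properly $2$-colorable while the base is not), and there is no averaging procedure that turns an $L_n$-coloring into a deck-invariant one --- a group acting on the set of colorings gives you an orbit of colorings, not a coloring constant on fibers. Your alternative (b) is also unsupported and, if anything, points the wrong way: choosing covers of large girth makes the radius-$2$ neighbourhoods tree-like and \emph{loosens} the constraints, whereas the obstructions available in $P$ (e.g.\ the five pairwise disjoint maximum induced matchings and the counting of how often each color class can appear) are global and have no local analogue that would force fiber-mates to agree. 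Indeed, the paper itself exhibits specific covers (the dodecahedron, $GP(10,3)$) that fail for lists of this three-type shape but explicitly omits those proofs, and it does \emph{not} obtain its infinite family by pulling lists back to covers. Stage~1 is also only a plan --- you never exhibit the $5$-lists for $P$ --- but that part is at least recoverable, since the paper proves $\chils{P}=7$, so $P$ is indeed not strongly $5$-edge-choosable.

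For contrast, the paper's construction avoids the descent problem entirely: it takes an arbitrary cover $R$ of the Petersen graph, deletes one edge $uv$, and splices in a copy $P'$ of the Petersen graph with one edge replaced by two pendant edges, identifying the pendant vertices with $u$ and $v$. The resulting graph is still a Petersen cover, so $\chis{G}=5$; for the list lower bound it assigns lists of size $5$ of three types only on the $P'$-part (all other edges get $\set{1,2,3,5,6}$), then uses a global counting argument over color classes of a strong $5$-edge-coloring to force the two interface edges $uv_1$ and $vv_2$ to receive the same color and to force all five colors to appear around $u$ and $v$, and finally kills the remaining possibilities by a short case analysis inside $P'$. If you want to salvage your plan, you would need either to prove the uncolorability of each chosen cover from the pulled-back lists directly (a nontrivial, cover-specific argument), or to switch to a gluing construction of this kind where a small gadget carries the list obstruction and a counting argument controls the interface.
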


Interestingly, there are also some planar graphs (e.g., the dodecahedron) and bipartite graphs (e.g., the generalized Petersen graph $GP(10,3)$)
among the graphs with different values for the two invariants.
Note that the above results are independently obtained also by Hasanvand~\cite{Has22}.

Finally, for the case of the Petersen graph, we prove the exact value of the list strong chromatic index.
\begin{theorem}
	\label{thm:pet7}
	For the Petersen graph $P$, it holds that
	$$
		\chils{P} = 7\,.
	$$
\end{theorem}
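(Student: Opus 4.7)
The plan is to establish $\chils{P} \geq 7$ and $\chils{P} \leq 7$ by separate arguments. For the lower bound I must exhibit a concrete list assignment $L$ with $|L(e)| = 6$ on every edge that admits no strong $L$-edge-coloring, and for the upper bound I must show that every list assignment with $|L(e)| \geq 7$ can be realized.

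For the lower bound, I would exploit the rigidity of strong edge-colorings of the Petersen graph: every color class in a strong edge-coloring of $P$ is an induced matching, the maximum induced matching in $P$ has size exactly $3$, and (up to automorphism) the strong $5$-edge-colorings of $P$ are essentially unique. Using the natural orbit decomposition of $E(P)$ into outer $5$-cycle, inner pentagram, and spokes, I would tailor a $6$-list assignment so that (i) each color is available only on a small set of edges, and (ii) every partition of $E(P)$ into induced matchings compatible with the lists fails Hall's condition or a local incidence constraint. The verification would proceed by enumerating the few equivalence classes of induced-matching covers of $E(P)$ under $\mathrm{Aut}(P)$, and ruling out each one by a direct list obstruction.

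For the upper bound, a pure greedy argument fails, since each edge has degree $12$ in $L(P)^2$ while the lists have size only $7$. I would instead use a radial ordering from a fixed edge $e_0$ (chosen freely using edge-transitivity), coloring edges in order of increasing BFS-distance in $L(P)^2$ and applying a local Kempe-type bichromatic exchange whenever the list of the current edge is exhausted by already-colored neighbors. As an alternative route, one could try the Combinatorial Nullstellensatz on the graph polynomial of $L(P)^2$: exploiting Petersen's large automorphism group, it may be tractable to verify the non-vanishing of the coefficient of a suitable monomial of degree $6$ in each variable, which would yield $\chils{P} \leq 7$ in one stroke.

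I expect the main obstacle to be the lower bound: producing a compact $6$-list assignment that blocks every strong $L$-edge-coloring of $P$ requires a careful case analysis over the several near-symmetric partitions of $E(P)$ into induced matchings, and the argument must rule all of them out simultaneously. The upper bound is also delicate, since the gap between the list size $7$ and the $L(P)^2$-degree $12$ is too narrow to accommodate a standard Brooks-type or greedy bound, so some genuine use of the Petersen graph's structure is required.
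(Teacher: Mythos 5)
There is a genuine gap: what you have written is a plan, and both halves of it stop short of the actual work. For the lower bound, everything hinges on exhibiting a concrete $6$-list assignment, and you never do; saying you ``would tailor'' lists on the three orbits and then enumerate induced-matching covers modulo $\mathrm{Aut}(P)$ rules out nothing. The paper's construction is exactly on the orbits you name (outer cycle, spokes, inner cycle), but its punch line is a counting argument you are missing: the three $6$-lists are drawn from a $9$-color palette so that each color lies in exactly two of the three lists. Since every color class in a strong edge-coloring of $P$ is an induced matching of size at most $3$ consisting of one outer edge, one spoke and one inner edge, and the five maximum induced matchings are pairwise disjoint and partition $E(P)$, every color class then has at most $2$ edges and is confined to one matching; hence each of the five matchings needs at least $2$ colors of its own, i.e.\ at least $10$ colors in total, while the union of the lists has only $9$. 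No enumeration of matching covers is needed, and without a concrete assignment your outline cannot be completed as stated.

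The upper bound is also not proved. The greedy-with-Kempe-exchange route is only a hope---you give no argument that the exchanges terminate or always free a color, and with list size $7$ against degree $12$ in $L(P)^2$ this is precisely where the difficulty lies. Your alternative, one application of the Combinatorial Nullstellensatz to the full graph polynomial of $L(P)^2$, requires the coefficient of $\prod_e x_e^6$ (a degree-$90$ polynomial in $15$ variables; note $L(P)^2\cong K_{3,3,3,3,3}$, the complement of five disjoint triangles) to be nonzero, which you do not verify and which amounts to an unproved claim about the Alon--Tarsi number of $K_{3,3,3,3,3}$. The paper instead fixes the five disjoint maximum induced matchings $M_a,\dots,M_e$ and runs a case analysis on how many colors a single matching can be given: in the ``spread'' cases (lists within each remaining matching pairwise disjoint and large) it applies Hall's theorem directly, and in the tight cases (one or two matchings colored with one or two colors) it applies the Nullstellensatz only to a $9$-variable conflict polynomial for the three uncolored matchings, with explicitly computed nonzero coefficients. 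If you want a one-stroke upper bound, the legitimate shortcut is Kierstead's theorem on the choosability of complete multipartite graphs $K_{3\star k}$, which the paper itself notes implies $\chils{P}\le 7$; but as written your proposal neither invokes that result nor supplies a substitute argument.
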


After publishing the preprint of this paper, we were notified by M. Hasanvand that the result 
of Theorem~\ref{thm:pet7} follows also from the result of Kierstead on complete multipartite graphs~\cite{Kie00}.

\subsection{List normal edge-coloring}

The second part of this paper is dedicated to initiating the study 
of the list version of the normal edge-coloring of cubic graphs.

A {\em normal edge-coloring} of a cubic graph is a proper edge-coloring, 
in which every edge is adjacent to edges colored with four distinct colors 
(such edges are called {\em rich})
or to edges colored with two distinct colors
(such edges are called {\em poor}).
If at most $k$ colors are used, we call the coloring a {\em normal $k$-edge-coloring}.
The smallest $k$, for which a graph $G$ admits a normal $k$-edge-coloring is the {\em normal chromatic index},
denoted by $\chin{G}$.
Clearly, every strong edge-coloring is also a normal edge-coloring, 
since every edge is rich.
On the other hand, if a cubic graph admits a proper edge-coloring with $3$ colors, 
then every edge is poor, and hence the coloring is also normal.

The normal edge-coloring was defined by Jaeger~\cite{Jae85} as an equivalent way of formulating the Petersen Coloring Conjecture~\cite{Jae88},
which asserts that the edges of every bridgeless cubic graph $G$ can be colored by using the edges of the Petersen graph $P$ as colors
in such a way that adjacent edges of $G$ are colored by adjacent edges of $P$;
in particular, a bridgeless cubic graph admits a normal $5$-edge-coloring if and only if it admits a Petersen coloring.
\begin{conjecture}[Jaeger~\cite{Jae85}]
	\label{con:Nor}
	For any bridgeless cubic graph $G$, it holds that
	$$
		\chin{G} \le 5\,.
	$$
\end{conjecture}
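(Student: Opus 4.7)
The final statement is Jaeger's conjecture, known to be equivalent to the Petersen Coloring Conjecture, and it is one of the central open problems about cubic graphs; I cannot honestly propose a proof, only a plan of the lines along which an attack would be organized. The first move is to fix the framework by using the equivalence of normal $5$-edge-colorings with homomorphisms $G \to P$ that preserve edge-adjacency, so that one may switch freely between the combinatorial formulation (every edge is rich or poor) and the more algebraic one (mapping into the Petersen graph). Since any $3$-edge-colorable cubic graph is trivially normally $3$-edge-colored---every edge is poor---the real content of the conjecture concerns snarks, and a hypothetical minimum counterexample may be assumed cyclically $4$-edge-connected and of girth at least $5$.

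The plan inside such a minimum counterexample $G$ is inductive. One would identify a reducible configuration $H \subseteq G$, replace it with a smaller gadget to obtain $G'$, invoke induction to get a normal $5$-edge-coloring $\varphi'$ of $G'$, and then extend the restriction of $\varphi'$ to the boundary of $H$ to all of $G$. The choice of $H$ has to be governed by the matching structure of $P$: one wants every possible induced boundary pattern on $G'$ to lift to at least one normal coloring of $H$. Natural candidates for $H$ are short cycles and $k$-poles with prescribed behavior of poor edges, for which the possible boundary patterns could in principle be enumerated using the symmetries of $P$. A potential function counting how many edges are already ``comfortably'' rich or poor could be used to monitor progress across reductions.

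The main obstacle, and the reason this plan is only a plan, is that normality is a non-local constraint: the rich-or-poor property at an edge $e$ depends on the colors of all five edges in the neighborhood of $e$, so a locally valid recoloring on $H$ can destroy the property at edges far from $H$. This is precisely what has prevented inductive strategies from driving the known upper bounds down to $5$, and it is where I expect any attempt along the above lines to get stuck. A successful proof would presumably require a new structural insight---either a local certificate for normality that is robust under boundary modifications, or an entirely non-inductive approach exploiting flow or homomorphism properties of $P$---and producing such an insight is the hard part.
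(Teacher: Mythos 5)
You are right that there is nothing to prove here in the first place: the statement labelled Conjecture~\ref{con:Nor} is Jaeger's formulation of the Petersen Coloring Conjecture, and the paper states it purely as an open conjecture, offering no proof and only citing its equivalence to Petersen colorings and its implications (Cycle Double Cover, Berge--Fulkerson). Your refusal to claim a proof is therefore the correct assessment, and your framing --- equivalence of normal $5$-edge-colorings with adjacency-preserving maps into the Petersen graph, triviality for class~I cubic graphs since a proper $3$-edge-coloring makes every edge poor, and the reduction of the real difficulty to snarks which may be assumed cyclically $4$-edge-connected of girth at least $5$ --- matches the standard understanding reflected in the paper and the literature it cites (e.g.\ the partial results giving normal $7$-edge-colorings in general and $6$-edge-colorings for special classes).

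Since there is no proof in the paper to compare against, the only substantive remark is that your proposed inductive scheme (reducible configurations, boundary patterns governed by the matching structure of $P$) is indeed the kind of approach that has so far only yielded the weaker bounds of $6$ and $7$ colors, for exactly the non-locality reason you identify: the rich/poor status of an edge depends on its entire $2$-edge-neighborhood, so recolorings inside a configuration can invalidate normality at edges outside it. Your plan is thus an accurate description of the state of the art and of where it gets stuck, not a gap in the usual sense; the statement simply remains open, both in this paper and in general.
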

Resolving Conjecture~\ref{con:Nor} would have a huge impact to the theory as it implies two famous conjectures;
namely, the Cycle Double Cover Conjecture~\cite{Jae85b} and the Berge-Fulkerson Conjecture~\cite{Ful71}; cf.~\cite{KarMacZer23} 
for more details.

In general, it is known that every cubic graph (with the bridgeless condition omitted) admits a normal $7$-edge-coloring~\cite{MazMkr20}, 
and the bound is tight, e.g., by any cubic graph that contains as a subgraph the complete graph $K_4$ with one edge subdivided.
When considering only bridgeless cubic graphs, 
Mazzuoccolo and Mkrtchyan~\cite{MazMkr20b} proved that all claw-free cubic graphs, tree-like snarks, and permutation snarks~\cite{MazMkr20b} 
admit a normal $6$-edge-coloring;
the latter result was generalized to bridgeless cubic graphs of oddness $2$ by Fabrici et al.~\cite{FabLuzSotSve24}.
With at most $5$ colors available, only very particular graphs are known to admit a normal edge-coloring, 
see, e.g.,~\cite{FerMazMkr20,HagSte13,SedSkr24b,SedSkr24}.
Hence, Conjecture~\ref{con:Nor} remains widely open in general.

\bigskip
In this paper, in relation to the list strong edge-colorings, 
we also study the properties of the list version of the normal edge-coloring.
For a cubic graph $G$, list normal edge-coloring and the {\em list normal chromatic index}, $\chiln{G}$,
are defined analogously to the list strong variants.

Clearly, the upper bound for the list normal chromatic index of cubic graphs is implied by Theorem~\ref{thm:lst10}.
\begin{corollary}
	\label{cor:lstnor}
	For any subcubic graph $G$, it holds that
	$$
		\chiln{G} \le 10\,.
	$$		
\end{corollary}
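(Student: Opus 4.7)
The plan is to derive this directly from Theorem~\ref{thm:lst10} by invoking the elementary fact, recalled in the introduction, that every strong edge-coloring of a cubic (or subcubic) graph is automatically a normal edge-coloring. Since in a strong edge-coloring all edges adjacent to a given edge $e$ receive pairwise distinct colors, every edge is rich, and the ``rich or poor'' requirement of a normal edge-coloring is vacuously satisfied.

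Concretely, I would take an arbitrary list assignment $L$ with $|L(e)| \ge 10$ for every edge $e$ of a subcubic graph $G$, and apply Theorem~\ref{thm:lst10} to obtain a strong $L$-edge-coloring $\varphi$ of $G$. By the observation above, $\varphi$ is also a normal $L$-edge-coloring, which by definition witnesses that $G$ is normally $10$-edge-choosable. Hence $\chiln{G} \le 10$.

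There is essentially no obstacle to overcome: the inequality $\chiln{G} \le \chils{G}$ for subcubic graphs holds for exactly the same reason that $\chin{G} \le \chis{G}$ does, and once this monotonicity is recorded the corollary is immediate from Theorem~\ref{thm:lst10}. The only care needed is to note that the implication ``strong coloring $\Rightarrow$ normal coloring'' preserves the list-respecting property, since the coloring $\varphi$ produced by Theorem~\ref{thm:lst10} already assigns to each edge a color from its list, and no further modification is required.
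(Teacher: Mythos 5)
Your proposal is correct and is exactly the paper's argument: the corollary is deduced from Theorem~\ref{thm:lst10} via the observation, already recorded in the introduction, that in a strong edge-coloring every edge is rich, so any strong $L$-edge-coloring is automatically a normal $L$-edge-coloring and hence $\chiln{G} \le \chils{G} \le 10$. Nothing further is needed.
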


We show that, similarly to the list strong edge-coloring, also in the list normal edge-coloring there are graphs $G$
with $\chiln{G} > \chin{G}$.
In particular, 
there is an infinite family of cubic graphs with list normal chromatic index at least $9$,
there are bridgeless cubic graphs with list normal chromatic index at least 8,
and 
there is an infinite family of cyclically 4-edge-connected cubic graphs with list normal chromatic index 
at least~7.
Interestingly, our examples of bridgeless graphs for the above results are all from class I,
and therefore they all have the normal chromatic index equal to $3$.

The paper is structured as follows. 
In Section~\ref{sec:prel}, we introduce notation, terminology, and auxiliary results.
In Sections~\ref{sec:upper} and~\ref{sec:strlow}, we prove results regarding the list strong chromatic index,
and in Section~\ref{sec:norlow}, we present constructions of graphs
with distinct normal and list normal chromatic indices.
We conclude the paper with some open problems in Section~\ref{sec:conc}.

\section{Preliminaries}
\label{sec:prel}

In this section, we introduce the terminology and auxiliary results used in the paper.

As usual, for a sequence of consecutive integers, we use the abbreviation $[i,j] = \set{i,i+1,\dots,j}$.
We call a cycle of length $k$ a {\em $k$-cycle}.
The {\em edge-neighborhood $N(e)$} of an edge $e$ is the set of edges adjacent to $e$,
and the {\em $2$-edge-neighborhood $N_2(e)$} is the set of edges at distance $1$ or $2$ from $e$.
An {\em induced matching} is a set of edges $M$ such that any pair of edges in $M$ is at distance at least $3$;
i.e., the graph induced on the endvertices of the edges of $M$ is a matching.

For a given list assignment $L$, a {\em partial strong $L$-edge-coloring} $\varphi$ of a graph $G$ 
is a strong edge-coloring of a subset of edges of $G$ such that any pair of colored edges $e$ and $f$;
i.e., we have $\varphi(e) \in L(e)$, $\varphi(f) \in L(f)$ and $\varphi(e) \neq \varphi(f)$ 
if $e$ and $f$ are at distance at most $2$ in $G$.

Given a list assignment $L$ and a partial strong $L$-edge-coloring,
we say that a color $c \in L(e)$ is {\em available} for the edge $e$
if no edge in $N_2(e)$ is colored with $c$.
We denote the set of all available colors for an edge $e$ with $A(e)$. Clearly, $A(e) \subseteq L(e)$.

In our proofs, we use the following application of Hall's Marriage Theorem~\cite{Hal35}.
\begin{theorem}
	\label{thm:hall}
	Let $G$ be a graph and $\varphi$ a partial (strong) edge-coloring of $G$.
	Let $X = \set{e_1,\dots,e_k}$ be the set of non-colored edges of $G$. 
	Let $\mathcal{F} = \set{A(e_1),\dots,A(e_k)}$. 
	If for every subset $\mathcal{X} \subseteq \mathcal{F}$ it holds that
	$$
		|\mathcal{X}| \le  \Big | \bigcup_{X \in \mathcal{X}} X \Big |\,,
	$$
	then one can choose an available color for every edge in $X$ such that all the edges receive distinct colors.
\end{theorem}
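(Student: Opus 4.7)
The plan is to derive this statement as a direct packaging of the classical Hall Marriage Theorem in the language of list edge-coloring. First, I would build an auxiliary bipartite graph $H$ whose two sides are the set $X = \{e_1,\dots,e_k\}$ of uncolored edges and the set $U = \bigcup_{i=1}^{k} A(e_i)$ of all currently available colors, joining $e_i \in X$ to $c \in U$ exactly when $c \in A(e_i)$.

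Next, I would verify that the hypothesis translates verbatim into the Hall condition on the $X$-side of $H$. For any $S \subseteq X$, writing $\mathcal{X} = \{A(e_i) : e_i \in S\} \subseteq \mathcal{F}$, the neighborhood $N_H(S)$ is precisely $\bigcup_{X \in \mathcal{X}} X$, so the assumed inequality $|\mathcal{X}| \le \bigl|\bigcup_{X \in \mathcal{X}} X\bigr|$ is the same as $|S| \le |N_H(S)|$. Hall's theorem then supplies a matching in $H$ saturating $X$, which is exactly a system of distinct representatives: a choice of pairwise distinct colors $c_1,\dots,c_k$ with $c_i \in A(e_i)$ for each $i$. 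Extending $\varphi$ by $\varphi(e_i) := c_i$ creates no conflict, because $c_i \in A(e_i)$ guarantees no edge of $N_2(e_i)$ previously carries the color $c_i$, and distinctness of the $c_i$ ensures no conflict arises among the newly colored edges.

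The main (and really only) obstacle is conceptual rather than technical: one must notice that the conclusion enforces distinctness of all $c_i$ simultaneously, which is strictly stronger than what a strong edge-coloring requires (two edges $e_i, e_j \in X$ at mutual distance at least $3$ could in principle share a color). This overshoot is intentional, since it is what allows the hypothesis to be phrased as a single clean Hall-type inequality on $\mathcal{F}$, making the criterion convenient to apply in the reduction arguments of the later sections.
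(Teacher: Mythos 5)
Your proposal is correct and coincides with the paper's intent: the paper states this result without a written proof, simply as an application of Hall's Marriage Theorem, and your bipartite-graph/system-of-distinct-representatives argument is exactly that application, including the correct observation that demanding pairwise distinct colors is a (harmless) strengthening of what a strong edge-coloring needs. Nothing is missing.
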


One of the strongest tools for determining whether colors from the sets of available colors 
can always be found such that the given conditions are satisfied is the following result due to Alon~\cite{Alo99}.
\begin{theorem}[Combinatorial Nullstellensatz~\cite{Alo99}]
	\label{thm:null} 
	Let $\mathbb{F}$ be an arbitrary field, 
	and let $P=P(X_1,\ldots,X_n)$ be a polynomial in $\mathbb{F}[X_1,\ldots,X_n]$.
	Suppose that the coefficient of the monomial $X_1^{k_1}\ldots X_n^{k_n}$, 
	where each $k_i$ is a non-negative integer, 
	is non-zero in $P$ 
	and the degree ${\rm deg}(P)$ of $P$ equals $\sum_{i=1}^n k_i$.
	If moreover $S_1,\ldots,S_n$ are any subsets of $\mathbb{F}$ with $|S_i|>k_i$ for $i=1,\ldots,n$,
	then there are $s_1\in S_1,\ldots,s_n\in S_n$ such that $P(s_1,\ldots,s_n) \neq 0$.
\end{theorem}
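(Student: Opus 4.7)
The plan is to argue by contradiction along the lines of Alon's original polynomial identity technique. Assume toward contradiction that $P$ vanishes on the entire grid $S_1 \times \cdots \times S_n$. My first step would be to isolate an auxiliary lemma, provable by straightforward induction on $n$: if a polynomial $Q(X_1,\ldots,X_n)$ satisfies $\deg_{X_i}(Q) \le t_i$ for every $i$ and vanishes on $T_1 \times \cdots \times T_n$ with $|T_i| = t_i + 1$, then $Q \equiv 0$. The base case $n=1$ is the familiar fact that a univariate polynomial of degree at most $t_1$ with $t_1 + 1$ distinct roots is the zero polynomial; the induction step fixes $X_2,\ldots,X_n$ on the grid, applies the base case in $X_1$, and recurses on the coefficients.

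Next, for each $i$, I would introduce the univariate polynomial $g_i(X_i) = \prod_{s \in S_i}(X_i - s)$, which has degree $|S_i|$, leading coefficient $1$, and vanishes identically on $S_i$. Writing $g_i(X_i) = X_i^{|S_i|} - h_i(X_i)$ with $\deg(h_i) < |S_i|$ yields the replacement rule $X_i^{|S_i|} \equiv h_i(X_i) \pmod{g_i}$. Applying this rule iteratively to $P$ produces a reduced polynomial $\tilde{P}$ with $\deg_{X_i}(\tilde{P}) < |S_i|$ for every $i$. Since each $g_i$ vanishes on $S_i$, the reduced polynomial $\tilde{P}$ agrees with $P$ on the grid and therefore also vanishes there. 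Applying the auxiliary lemma with $t_i = |S_i| - 1$ forces $\tilde{P} \equiv 0$.

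The crux is to trace what happens to the coefficient of the distinguished monomial $X_1^{k_1}\cdots X_n^{k_n}$ throughout the reduction. Each replacement $X_i^{|S_i|} \mapsto h_i(X_i)$ strictly decreases the total degree of whatever monomial it touches, since $\deg(h_i) < |S_i|$. Because every monomial of $P$ has total degree at most $\deg(P) = \sum k_i$, every intermediate polynomial maintains this degree bound, and in particular no monomial of degree greater than $\sum k_i$ is ever created that could later reduce into $X_1^{k_1}\cdots X_n^{k_n}$. Moreover, the monomial $X_1^{k_1}\cdots X_n^{k_n}$ itself has $k_i < |S_i|$ for every $i$, so it is never acted upon by a replacement. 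Hence its coefficient in $\tilde{P}$ equals its coefficient in $P$, which was assumed non-zero, contradicting $\tilde{P} \equiv 0$.

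The main obstacle I anticipate is precisely this coefficient-tracking step: one must verify carefully that the cascaded reductions cannot indirectly create or destroy the monomial $X_1^{k_1}\cdots X_n^{k_n}$, and the strict degree-decrease of every single replacement is what rules this out cleanly. The remainder of the argument is just the clean interplay between the vanishing lemma and polynomial division modulo $g_1,\ldots,g_n$.
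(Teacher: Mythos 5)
Your argument is correct: it is essentially Alon's original proof of the Combinatorial Nullstellensatz, which the paper itself does not prove but simply quotes from~\cite{Alo99}. The two ingredients you use --- the vanishing lemma for polynomials with $\deg_{X_i}\le t_i$ on a grid of size $\prod(t_i+1)$, and reduction of $P$ modulo the polynomials $g_i(X_i)=\prod_{s\in S_i}(X_i-s)$ --- are exactly the standard route, and your coefficient-tracking step (the distinguished monomial is never itself reduced since $k_i<|S_i|$, and nothing can reduce into it because every replacement strictly lowers total degree while no monomial of $P$ exceeds degree $\sum k_i$) correctly handles the only delicate point; note also that your version dispenses with the usual preliminary step of shrinking each $S_i$ to size exactly $k_i+1$, which is harmless since the vanishing lemma applies with $t_i=|S_i|-1$.
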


In short, for $P_G$ being the graph polynomial of a graph $G$, 
if there is a monomial $m$ of $P_G$ with degree $\deg(P_G)$ and a non-zero coefficient, 
and moreover in $m$ the degree of every variable is less than the number of available colors for 
the vertex represented by the variable,
then there exists a coloring of $G$.
For a monomial $m$, we denote the coefficient of $m$ in the polynomial $P_G$ by $\coef{P_G}{m}$.

Usually, we only consider edge-coloring of a subgraph $H$ of a graph $G$, 
with some of the other edges in $G$ already being precolored 
and hence the lists of available colors for edges in $H$ are reduced accordingly.
In order to apply Theorem~\ref{thm:null}, we construct an auxiliary conflict graph $C(H)$,
in which every vertex represents an edge to be colored,
and two vertices are adjacent whenever the corresponding edges need to be colored with distinct colors. 
Clearly, the input to Theorem~\ref{thm:null} is the graph polynomial of $C(G)$, 
but to avoid this step, we simply say that we consider a {\em conflict graph polynomial} for $H$.

Note that in this paper, every conflict graph polynomial is homogeneous,
i.e., it is a sum of monomials of the same degree,
and therefore the degree condition of Theorem~\ref{thm:null} for monomials is always fulfilled.

\section{Upper bound on the list strong chromatic index}
\label{sec:upper}

In this section, we prove the tight upper bound for the list strong chromatic index.

In the first part of our proof, 
we follow the proof of the result of Dai et al.~\cite{DaiWanYanYu18}
that the list strong chromatic index of subcubic graphs is at most $11$.
In particular, they showed that for eliminating cycles of length at most $5$
from the minimal counterexample, one can even assume lists of length $10$.

\begin{proof}[Proof of Theorem~\ref{thm:lst10}.]
	Suppose the contrary and let $G$ be a minimal counterexample to the theorem;
	i.e., a graph with maximum degree $3$, which has the list strong chromatic index greater than~$10$.
	
	Clearly, $G$ is connected. 
	Moreover, from~\cite{DaiWanYanYu18}, 
	we have the following structural properties of $G$ (since lists of size $10$ are assumed in these lemmas).

	\begin{claim}[{\cite[Lemma~2.1]{DaiWanYanYu18}}]
		\label{cl:reg}
		$G$ is $3$-regular. \claimqed
	\end{claim}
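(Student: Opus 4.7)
The plan is to apply a standard deletion-and-extension argument to the minimal counterexample $G$. Assume for contradiction that some vertex $v$ has $d(v) \le 2$, and set $G' = G - v$. Since $G'$ is subcubic with strictly fewer edges than $G$, by the minimality of $G$ it admits a strong $L$-edge-coloring $\varphi'$ for the restriction of $L$ to $E(G')$. The remaining task is to extend $\varphi'$ by assigning colors from lists of size $10$ to the one or two edges incident to $v$, which will contradict $\chils{G} > 10$.

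For $d(v) = 1$, with neighbor $u$, the edge $e = uv$ has at most $2$ edges at distance $1$ in $G$ (the other edges at $u$) and at most $4$ edges at distance $2$ (those reached through the second endpoints of the edges at $u$), so $|N_2(e)| \le 6$ and $|A(e)| \ge 10 - 6 = 4$; the extension is immediate. For $d(v) = 2$, with neighbors $u_1, u_2$ and $e_i = vu_i$, a similar count gives at most $2 + 4 + 2 = 8$ already-colored edges in $N_2(e_i)$ (at most $2$ at $u_i$, at most $4$ reached through $u_i$, and at most $2$ at $u_{3-i}$ reached via $e_{3-i}$), so $|A(e_i)| \ge 2$ for $i \in \{1,2\}$. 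I would then pick distinct colors $c_i \in A(e_i)$: when $A(e_1) \neq A(e_2)$ this is trivial, and when $A(e_1) = A(e_2)$ the size-$2$ lower bound still permits two distinct choices. Theorem~\ref{thm:hall} could also be invoked directly with $\mathcal{F} = \{A(e_1), A(e_2)\}$.

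The only mildly delicate point is the degree-$2$ case, where the two sets of available colors might coincide; however, the guarantee $|A(e_i)| \ge 2$ resolves this without difficulty. In every subcase $\varphi'$ extends to a strong $L$-edge-coloring of $G$, contradicting the assumption. Hence every vertex of $G$ has degree exactly $3$.
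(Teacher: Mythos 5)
Your argument is correct: deleting a low-degree vertex, invoking minimality, and counting at most $6$ (resp.\ $8$) colored edges in the $2$-neighborhood of the pendant edge(s) so that lists of size $10$ leave enough available colors is exactly the standard deletion-and-greedy-extension argument behind this claim, which the paper does not prove itself but imports as Lemma~2.1 of Dai et al.\ (valid already for lists of size $10$). The only cosmetic remark is that the degree-$0$ case is excluded because the minimal counterexample is connected, a fact the paper records just before the claims.
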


	\begin{claim}[{\cite[Lemma~2.2]{DaiWanYanYu18}}]
		\label{cl:no3}
		$G$ does not contain any $3$-cycle. \claimqed
	\end{claim}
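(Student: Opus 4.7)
The plan is to argue by contradiction within the minimum counterexample. Suppose $G$ contains a $3$-cycle $T$ with vertices $v_1, v_2, v_3$ and edges $e_1 = v_1v_2$, $e_2 = v_2v_3$, $e_3 = v_1v_3$, and fix a list assignment $L$ with $|L(e)| \ge 10$ for every $e \in E(G)$ for which $G$ is not strong $L$-edge-colorable. By Claim~\ref{cl:reg} each $v_i$ has a unique third neighbor $u_i$ lying outside $T$.

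My first step is to remove the three triangle edges: let $G' = G - \{e_1,e_2,e_3\}$. Since $G'$ has strictly fewer edges than $G$ and is still subcubic, by the minimality of $G$ there exists a strong $L$-edge-coloring $\varphi$ of $G'$, and it then suffices to extend $\varphi$ to the three triangle edges in order to contradict the choice of $L$. The key ingredient is a count of available colors at each $e_i$: the edges of $N_2(e_i) \setminus E(T)$ are contained among the three pendant edges $v_1u_1, v_2u_2, v_3u_3$ together with the four further edges incident to the two $u_j$'s corresponding to the endvertices of $e_i$. This yields at most $7$ colored edges in $N_2(e_i)$, and therefore $|A(e_i)| \ge 10 - 7 = 3$ for each $i \in \{1,2,3\}$. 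Since $e_1, e_2, e_3$ are pairwise at distance $1$, completing $\varphi$ amounts to choosing pairwise distinct representatives from $A(e_1), A(e_2), A(e_3)$, which is possible by Theorem~\ref{thm:hall}.

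The main issue that needs care is that the bound of seven already-colored edges in $N_2(e_i)$ must remain valid under all possible coincidences: the vertices $u_1, u_2, u_3$ need not be distinct, two of them may be adjacent in $G$, or the neighborhoods of different $u_j$'s may overlap. However, each such coincidence only merges two formally counted edges into a single edge of $G'$ and can therefore only decrease the number of distinct forbidden colors, so the inequality $|A(e_i)| \ge 3$ is preserved throughout. Consequently no case analysis is needed, and the argument yields the desired contradiction, showing that no $3$-cycle can exist in $G$.
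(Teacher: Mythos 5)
There is a genuine gap in your reduction step. You delete only the three triangle edges and take a strong $L$-edge-coloring $\varphi$ of $G' = G - \{e_1,e_2,e_3\}$, but such a coloring only respects distances \emph{in $G'$}, and deleting edges changes the distance-$2$ relation. Concretely, the pendant edges $v_1u_1$ and $v_2u_2$ are at distance $2$ in $G$ (via the deleted edge $v_1v_2$), yet in $G'$ they may be at distance $3$ or more (this happens whenever $u_1\neq u_2$ and $u_1u_2\notin E(G)$, which is the generic situation), so $\varphi$ may legitimately give them the same color. That common color is a conflict of $G$ that no assignment of colors to $e_1,e_2,e_3$ can repair, so the inherited partial coloring need not extend to a strong edge-coloring of $G$ at all; your Hall-type completion argument (which is otherwise fine: at most $7$ colored edges in $N_2(e_i)$, hence $|A(e_i)|\ge 3$, and three pairwise adjacent edges) starts from a possibly invalid partial coloring. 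A secondary point: if minimality of the counterexample is taken with respect to the number of vertices (as the paper's own reductions, which always delete vertices, suggest), then passing to a spanning subgraph with fewer edges does not invoke minimality directly.

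The standard repair --- and what the cited source does (the paper itself does not reprove this claim; it imports it from Lemma~2.2 of~\cite{DaiWanYanYu18}, and its own cycle reductions in Claims~\ref{cl:6cyc} and~\ref{cl:nocyc} follow the same pattern) --- is to delete the \emph{vertices} of the triangle. Then the pendant edges $v_iu_i$ are also uncolored, every pair of colored edges retains its $G$-distance, and one completes the coloring on the six uncolored edges: each pendant edge has at least $10-7=3$ available colors and each triangle edge at least $10-4=6$, after which Hall's condition (Theorem~\ref{thm:hall}) or a short greedy/Nullstellensatz argument finishes the extension. Your observation that vertex coincidences among the $u_j$ only shrink the set of forbidden colors is correct, but it addresses the counting, not the real issue, which is the loss of conflicts between already-colored edges.
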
	
	
	\begin{claim}[{\cite[Lemma~2.3]{DaiWanYanYu18}}]
		\label{cl:no4}
		$G$ does not contain any $4$-cycle. \claimqed
	\end{claim}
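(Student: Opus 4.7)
The plan is a reducible-configuration argument that exploits the minimality of $G$. Suppose for contradiction that $G$ contains a $4$-cycle $C = v_1 v_2 v_3 v_4$. By Claim~\ref{cl:reg}, every $v_i$ has a unique third neighbor $u_i \notin V(C)$, and by Claim~\ref{cl:no3}, any two consecutive $u_i, u_{i+1}$ must be distinct (otherwise a triangle appears); only the opposite pairs $u_1 = u_3$ or $u_2 = u_4$ may coincide.

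My general strategy is: pick a small configuration of edges or vertices near $C$ to delete, producing a strictly smaller subcubic graph $G'$; invoke the minimality of $G$ to obtain a strong list $L$-edge-coloring $\varphi$ of $G'$ (with lists inherited from $L$); then extend $\varphi$ to the deleted edges. For each uncolored edge $e$, I would bound $|\F{e}|$ from below by $10$ minus the number of already-colored edges in $N_2(e)$, and conclude via Hall's theorem (Theorem~\ref{thm:hall}) or, when the conflict graph is not complete, via the Combinatorial Nullstellensatz (Theorem~\ref{thm:null}).

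A natural first attempt is to delete the two diagonally opposite vertices $v_1$ and $v_3$, leaving six uncolored edges: the four edges of $C$ and the two pendants $v_1 u_1, v_3 u_3$. A direct count of $N_2(e)$ yields $|\F{e}| \geq 4$ for each cycle edge and $|\F{e}| \geq 2$ for each pendant; since $v_1 u_1$ and $v_3 u_3$ lie at distance $3$ in $G$, they may share a color, so the conflict graph on the six uncolored edges is $K_6$ minus the edge $\{v_1 u_1, v_3 u_3\}$. These bounds are, however, tight in the worst case — for instance, if all four cycle-edge lists coincide with a $4$-set that contains both pendant lists, the coloring of the cycle $K_4$ will exhaust the pendant lists. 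I would therefore refine the reduction in one of two ways: (i) restrict the lists on a few boundary edges of $G'$ so that the minimality still applies on $G'$, but the extension to pendants is guaranteed; or (ii) branch on whether $u_1 = u_3$ or $u_2 = u_4$, in which case certain edges in $N_2$ collapse, increasing $|\F{e}|$ for the pendants. In the generic case (all $u_i$'s distinct and pairwise non-adjacent) I would carry out a direct polynomial-coefficient computation in the conflict graph polynomial and apply Theorem~\ref{thm:null}.

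The main obstacle, as the preceding discussion suggests, is the sharpness of the list size $10$: the bound $|\F{v_1 u_1}|, |\F{v_3 u_3}| \geq 2$ leaves essentially no slack, so the reduction must be designed to survive a near-worst-case list configuration. Concretely, the technical heart is showing that a suitable maximum-degree monomial in the conflict graph polynomial has a nonzero coefficient with per-variable degrees matching the computed lower bounds on $|\F{e}|$; the residual casework on coincidences and adjacencies among the $u_i$ should then be comparatively routine.
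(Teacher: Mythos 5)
There is a genuine gap. First, note that the paper does not reprove this statement at all: Claim~\ref{cl:no4} is imported verbatim from Lemma~2.3 of Dai, Wang, Yang and Yu~\cite{DaiWanYanYu18}, whose cycle-elimination lemmas are proved there under the assumption of lists of size $10$, so they apply to the minimal counterexample directly; only cycles of length $\ge 6$ are handled anew in this paper. Your blind attempt instead sketches an original reduction, and its concrete core does not work. After deleting only $v_1$ and $v_3$, the six uncolored edges form a conflict graph $K_6$ minus the edge $\{v_1u_1,v_3u_3\}$, with the availability bounds you correctly compute: at least $4$ for each cycle edge and at least $2$ for each pendant. At this level of information (conflict structure plus list sizes) the extension can genuinely fail: if the four cycle edges all have the same available $4$-set containing both pendant $2$-sets, no proper coloring exists, as you yourself observe. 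This also kills your fallback plan of a Nullstellensatz computation ``in the generic case'': the graph polynomial of $K_6-e$ has degree $14$, and the only monomial compatible with the bounds has degrees $3,3,3,3,1,1$, summing to exactly $14$; if its coefficient were nonzero, Theorem~\ref{thm:null} would make the bad list assignment above colorable, so that coefficient is necessarily zero. Hence no choice of monomial can rescue this particular reduction, and Theorem~\ref{thm:hall} fails for the same reason.

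The proposed repairs are not carried out and do not obviously suffice. Branching on $u_1=u_3$ (or $u_2=u_4$) makes the two pendants adjacent, so the conflict graph becomes $K_6$ with all six availability bounds equal to $5$, which is again not enough for a size-only argument ($K_6$ with six identical $5$-lists is not colorable), and ``restricting the lists on a few boundary edges of $G'$'' is left entirely unspecified, even though the minimality of $G$ only gives you colorability of $G'$ for the inherited lists, not for modified ones, so this step would need real justification. To make a self-contained proof in the spirit of the paper, you should mimic Claim~\ref{cl:6cyc}: delete all cycle vertices except one (or all four, as Dai et al.\ do), so that the uncolored pendant edges retain at least $3$--$4$ available colors and the cycle edges $5$--$6$, handle the degeneracies $u_1=u_3$, $u_2=u_4$ and possible edges between pendant neighbours explicitly in the conflict graph, and then exhibit a concrete nonzero coefficient---or simply cite~\cite{DaiWanYanYu18} as the paper does.
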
		
	
	\begin{claim}[{\cite[Lemma~2.4]{DaiWanYanYu18}}]
		\label{cl:no5}
		$G$ does not contain any $5$-cycle. \claimqed
	\end{claim}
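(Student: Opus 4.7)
The plan is as follows. Assume for contradiction that $G$ contains a $5$-cycle $C = v_1v_2v_3v_4v_5$. Let $e_i = v_iv_{i+1}$ and $f_i = v_iu_i$, where $u_i$ is the third neighbor of $v_i$ (indices modulo $5$). By Claims~\ref{cl:reg}, \ref{cl:no3}, and \ref{cl:no4}, $G$ is cubic with girth at least $5$, and a short case analysis shows that $u_1, \ldots, u_5$ are pairwise distinct and pairwise non-adjacent: any coincidence $u_i = u_j$ or edge $u_iu_j$ of $G$ with $|i-j|\le 2$ would, together with a short arc of $C$, create a cycle of length $3$, $4$, or $5$, contradicting the girth or the assumed absence of shorter cycles.

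Next, I would delete the ten edges $\{e_1,\ldots,e_5,f_1,\ldots,f_5\}$ from $G$ to obtain a proper subcubic subgraph $G'$. By the minimality of $G$, $G'$ admits a list strong $L$-edge-coloring $\varphi$. Writing $\F{e}$ for the set of colors in $L(e)$ unused by $\varphi$ on the colored edges of $N_2(e)$, a direct count (enumerating which of the at most $12$ edges of $N_2(e_i)$ and $N_2(f_i)$ survive in $G'$) gives $|\F{e_i}| \ge 10 - 4 = 6$, since only the four edges incident to $u_i$ and $u_{i+1}$ remain in $G'$, and $|\F{f_i}| \ge 10 - 6 = 4$, since only the two edges at $u_i$ and the four at its remaining neighbors remain.

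The main step is extending $\varphi$ to the ten deleted edges. In the associated conflict graph $H$ on those edges, the $e_i$'s induce $K_5$ (every two cycle edges are at distance at most $2$ on $C$), the $f_i$'s induce a $5$-cycle (via the edges $e_{i-1}, e_i$), and each $e_i$ is joined precisely to the four pendants $f_{i-1}, f_i, f_{i+1}, f_{i+2}$, the non-edge $e_if_{i+3}$ corresponding to distance $3$ in $G$; in total $|E(H)| = 10 + 5 + 20 = 35$. My approach is to invoke the Combinatorial Nullstellensatz (Theorem~\ref{thm:null}) with the monomial
$$
m \;=\; \prod_{i=1}^{5} x_{e_i}^{5} \,\prod_{i=1}^{5} x_{f_i}^{2},
$$
whose total degree $25 + 10 = 35$ equals $|E(H)|$ and whose per-variable exponents $5$ and $2$ are strictly smaller than the list sizes $|\F{e_i}| \ge 6$ and $|\F{f_i}| \ge 4$. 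A non-zero value of $\coef{P_H}{m}$ then produces the required extension, contradicting the minimality of $G$.

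The hard part will be verifying that $\coef{P_H}{m} \ne 0$ in this ten-variable graph polynomial. Via the Alon--Tarsi interpretation, the coefficient equals (up to sign) the difference between the numbers of even and odd Eulerian suborientations of $H$ with out-degree $5$ at each $e_i$ and $2$ at each $f_i$; I expect the $\mathbb{Z}_5$-rotational symmetry of $H$ to reduce the enumeration to a tractable orbit count. Should this direct verification prove unwieldy, a natural fallback is to first color the $f_i$'s (using that $C_5$ is $3$-choosable, so lists of size $4$ more than suffice) and then extend to the $e_i$'s via a Hall-type argument on $K_5$ (Theorem~\ref{thm:hall}), exploiting the missing edges $e_if_{i+3}$ to locally reassign the $f$-coloring whenever the induced $e$-lists would otherwise concentrate and violate Hall's condition.
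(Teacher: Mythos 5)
The paper does not prove this claim at all: it is imported verbatim as Lemma~2.4 of Dai, Wang, Yang and Yu~\cite{DaiWanYanYu18}, where it is established for lists of size $10$. So you are reproving an external lemma; your template (delete the cycle and its pendant edges, recolor by minimality, extend via Combinatorial Nullstellensatz) is the same one the paper itself uses for the $6$-cycle in Claim~\ref{cl:6cyc}, which is a sensible choice. However, your write-up has a genuine structural error. You assert that $u_1,\dots,u_5$ are pairwise non-adjacent because any adjacency would close a cycle of length $3$, $4$, or $5$. The distinctness argument is fine, and an edge $u_iu_{i+1}$ does force a $4$-cycle, but an edge $u_iu_{i+2}$ closes the cycle $v_iu_iu_{i+2}v_{i+2}v_{i+1}v_i$ of length exactly $5$ --- which is precisely the thing you have not yet excluded and cannot exclude by minimality of $C$. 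Compare the paper's $6$-cycle reduction, which explicitly keeps the possible conflict edges $y_1y_4$ and $y_2y_5$ for the same reason. Each surviving adjacency $u_iu_{i+2}$ puts $f_i$ and $f_{i+2}$ at distance $2$, adding up to five edges (a pentagram on the $f_i$'s) to your conflict graph $H$. This breaks your bookkeeping: $\deg P_H$ is no longer $35$, your monomial $m$ no longer has full degree, and Theorem~\ref{thm:null} does not apply as stated. You would need to include these potential conflicts (the exponent budget $\prod e_i^5\prod f_i^3$ of degree $40$ still fits under $|\F{e_i}|\ge 6$, $|\F{f_i}|\ge 4$ in the worst case, but that is a different polynomial and a different coefficient), or handle each adjacency pattern separately.

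The second gap is that the decisive step is only announced, not carried out: you never verify $\coef{P_H}{m}\neq 0$, and the whole argument rests on that single number. Your fallback is also not sound as described: after coloring the five $f_i$'s, each $e_i$ conflicts with four of them and retains only $\ge 6-4=2$ available colors, while the $e_i$'s induce a $K_5$; Hall's condition for five pairwise-conflicting edges with lists of size $2$ can certainly fail, and ``locally reassigning the $f$-coloring'' is not an argument. So the proposal is a plausible plan in the spirit of the paper's Claim~\ref{cl:6cyc}, but as it stands it neither accounts for all the conflicts that can actually occur nor completes the computation that would finish the proof.
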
			
	
	Next, we reduce cycles of length at least $6$.
	
	\begin{claim}
		\label{cl:6cyc}
		$G$ does not contain any $6$-cycle.
	\end{claim}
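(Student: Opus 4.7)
The plan is the following. Suppose for contradiction that $G$ contains a $6$-cycle $C = v_1 v_2 \cdots v_6 v_1$, and let $u_i$ denote the third neighbor of $v_i$ (Claim~\ref{cl:reg}). The absence of $3$-, $4$-, and $5$-cycles (Claims~\ref{cl:no3}--\ref{cl:no5}) forces $u_1, \ldots, u_6$ to be six pairwise distinct vertices disjoint from $V(C)$; I focus on the generic case in which no $u_i u_j$ is an edge of $G$, since the remaining subcases (at most three possible edges of the form $u_i u_{i+3}$) are handled by minor adaptations of the same argument. Let $E' = \{e_1, \ldots, e_6\} \cup \{f_1, \ldots, f_6\}$ with $e_i = v_i v_{i+1}$ and $f_i = v_i u_i$ (indices modulo $6$). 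By the minimality of $G$, the graph $G - E'$ admits a strong $L$-edge-coloring $\varphi$, and a direct count of colored edges in the relevant $N_2$-neighborhoods yields $|A(e_i)| \ge 10 - 4 = 6$ and $|A(f_i)| \ge 10 - 6 = 4$.

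It remains to extend $\varphi$ to $E'$, which is equivalent to list-coloring the conflict graph $H$ on $E'$, where $e_i$ is adjacent to $e_{i \pm 1}, e_{i \pm 2}, f_{i-1}, f_i, f_{i+1}, f_{i+2}$ (degree $8$) and $f_i$ is adjacent to $f_{i-1}, f_{i+1}, e_{i-2}, e_{i-1}, e_i, e_{i+1}$ (degree $6$). In particular, $|E(H)| = 42$, and $H$ inherits a dihedral $D_6$-symmetry from $C$. A naive greedy attempt fails: the $f$-vertices have conflict-degree $6$ but only $4$ available colors, and coloring the cycle edges first leaves the $f_i$'s without enough slack; conversely, the $e$-vertices lie on the octahedron $K_{2,2,2}$, which is not list-colorable from arbitrary lists of size $2$.

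My plan is therefore to apply the Combinatorial Nullstellensatz (Theorem~\ref{thm:null}) to the conflict-graph polynomial $P_H$, homogeneous of degree $42$. Guided by the $D_6$-symmetry, the natural candidate is the uniform monomial $\prod_{i=1}^{6} x_{e_i}^{5} \prod_{i=1}^{6} x_{f_i}^{2}$, whose total degree $6 \cdot 5 + 6 \cdot 2 = 42$ matches $\deg(P_H)$ and whose per-variable exponents $5$ and $2$ lie strictly below the list-size bounds $6$ and $4$. The main obstacle is showing $\coef{P_H}{\prod x_{e_i}^{5} \prod x_{f_i}^{2}} \ne 0$. I would attack this via the Alon-Tarsi orientation interpretation --- the coefficient equals, up to sign, the excess of even over odd Eulerian suborientations of $H$ with out-degree $5$ on each $e$-vertex and $2$ on each $f$-vertex --- exploiting the heavy symmetry to prune the enumeration, with a brief symbolic check as a safety net. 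Should the uniform monomial yield a zero coefficient, the fallback is to perturb the exponent vector inside the feasibility region $\{(a,b) : \sum a_i + \sum b_j = 42, \, a_i \le 5, \, b_j \le 3\}$, which comfortably contains many alternative symmetric and near-symmetric candidates to try.
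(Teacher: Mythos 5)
Your overall strategy is the same as the paper's: reduce the $6$-cycle by minimality, count available colors for the uncolored edges (your counts of at least $6$ for cycle edges and at least $4$ for pendant edges are correct in your set-up), and extend via the Combinatorial Nullstellensatz applied to the conflict-graph polynomial. However, the proof has a genuine gap at exactly the point where all the work lies: you never establish that $\coef{P_H}{\prod_i x_{e_i}^{5}\prod_i x_{f_i}^{2}} \neq 0$. Proposing a candidate monomial, sketching an Alon--Tarsi enumeration you have not performed, and reserving a ``fallback'' perturbation of the exponent vector is a plan, not a proof; if the symmetric coefficient happens to vanish, nothing in your argument survives. It is worth noting that the paper's verified certificate is decidedly non-uniform (the coefficient $-2$ is obtained for the monomial $x_0^4x_1^4x_2^5x_3^5x_4^4x_5^4y_1^2y_2^3y_3^2y_4^3y_5^2$ in a related but different polynomial), so symmetry alone is no evidence that your uniform exponent pattern works.

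The second gap is the dismissal of the non-generic configurations. By Claims~\ref{cl:no3}--\ref{cl:no5} the only possible extra adjacencies are the antipodal edges $u_iu_{i+3}$, but these are not ``minor'': each such edge adds a conflict $f_i\sim f_{i+3}$, so the conflict polynomial is no longer your degree-$42$ polynomial and your uniform monomial no longer satisfies the degree-matching hypothesis of Theorem~\ref{thm:null}; a separate nonvanishing coefficient (or a single polynomial that includes all potential antipodal conflicts, with correspondingly adjusted exponents) must be exhibited. The paper avoids this case split by building the possible extra conflicts into the polynomial from the start (imposing constraints that may not be present is harmless), and it also keeps one pendant edge precolored by deleting only five cycle vertices, which is why its exponent pattern differs from yours. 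Until you supply an actual nonzero coefficient for each conflict graph that can arise, the claim is not proved.
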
	
	
	\begin{proofclaim}
		Suppose the contrary and let $C=v_0\dots v_5$ be a $6$-cycle in $G$.
		For every $i \in \set{0,\dots,5}$, call the edge $x_i = v_iv_{i+1}$ (indices modulo $6$) a {\em cycle edge},
		and every non-cycle edge $y_{i}$ incident to $v_i$ a {\em pendant edge} (see Figure~\ref{fig:C6}).
		\begin{figure}[htp!]
			$$
				\includegraphics{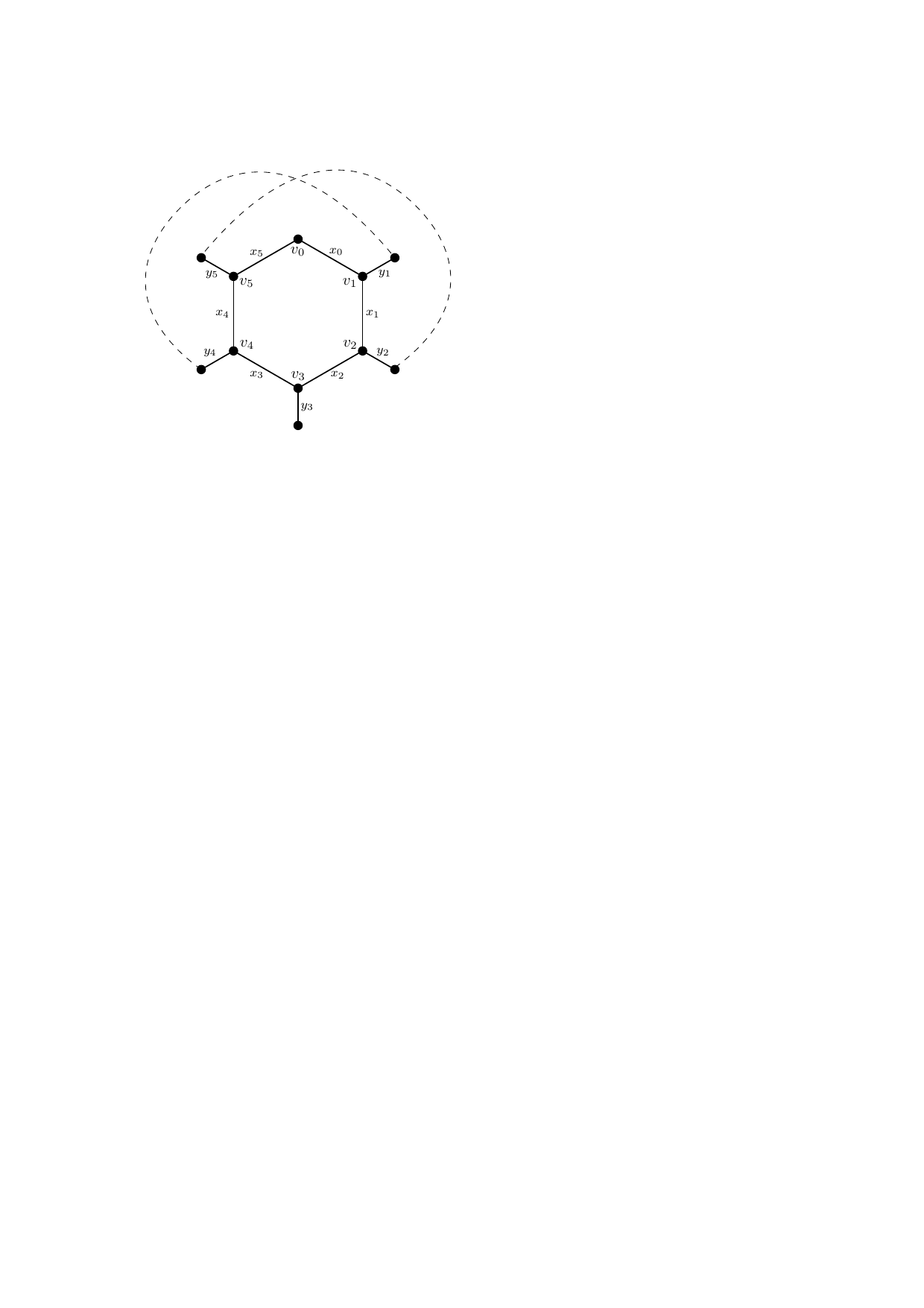}
			$$
			\caption{The hypothetical $6$-cycle $C$ in $G$.
				The edges $y_1$ and $y_4$ (and also $y_2$ and $y_5$) might be connected by an edge (depicted dashed).}
			\label{fig:C6}
		\end{figure}			
		
		By the minimality of $G$, there exists a list strong edge-coloring $\varphi'$ 
		of $G' = G \setminus \set{v_1,v_2,v_3,v_4,v_5}$ for any list assignment $L$ with lists of size at least $10$.
		Let $\varphi$ be the coloring of $G$ induced by $\varphi'$.
		Then, only the edges of $C$ and the pendant edges except $y_0$ are non-colored in $\varphi$.
		The edges $x_0, x_1, x_4$ and $x_5$ have at least $5$ available colors,
		the edges $x_2$ and $x_3$ have at least $6$, 
		$y_1$ and $y_5$ have at least $3$, 
		and $y_2,y_3,y_4$ have at least $4$ available colors.
		
		Claims~\ref{cl:no3}--\ref{cl:no5} imply that no two pendant edges are the same or adjacent;
		it may however happen that the edges $y_1$ and $y_4$ (and similarly, $y_2$ and $y_5$) are connected by an edge;
		we thus assume also these two edges.
		So, the conflict graph polynomial $P_{C_6}$ created on the non-colored edges with conflicts 
		between edges at distance at most $2$ is the following (taking indices modulo $6$):		
		\begin{align*}
			P_{C_6}(x_0,\dots,x_5,y_1,\dots,y_5) =& \Bigg [ \prod_{i=0}^5 (x_i - x_{i+1}) \cdot (x_i - x_{i+2}) \Bigg ] \\
				 & \cdot (x_0 - y_1) \cdot (x_0 - y_2) \cdot (x_0 - y_5) \\
				 & \cdot (x_1 - y_2) \cdot (x_1 - y_3) \cdot (x_1 - y_1) \\
				 & \cdot (x_2 - y_3) \cdot (x_2 - y_4) \cdot (x_2 - y_2) \cdot (x_2 - y_1) \\
				 & \cdot (x_3 - y_4) \cdot (x_3 - y_5) \cdot (x_3 - y_3) \cdot (x_3 - y_2) \\
				 & \cdot (x_4 - y_5) \cdot (x_4 - y_4) \cdot (x_4 - y_3) \\
				 & \cdot (x_5 - y_1) \cdot (x_5 - y_5) \cdot (x_5 - y_4) 	\\
				 &\cdot (y_1 - y_2) \cdot (y_2 - y_3) \cdot (y_3 - y_4) \cdot (y_4 - y_5) \\
				 & \cdot (y_1 - y_4) \cdot (y_2 - y_5)
		\end{align*}
		
		Using the function {\tt Coefficient} in Wolfram Mathematica~\cite{Mat24}, we infer that 
		in $P_{C_6}$, we have the coefficient 
		$$
			\coef{P_{C_6}}{x_0^4 \ x_1^4 \ x_2^5 \ x_3^5 \ x_4^4 \ x_5^4 \ y_1^2 \ y_2^3 \ y_3^2 \ y_4^3 \ y_5^2} = -2\,,
		$$
		which, by Theorem~\ref{thm:null}, 
		means that we can extend the coloring $\varphi$ to all the edges of $G$, a contradiction.
	\end{proofclaim}

	We continue by showing that in $G$ any cycle is reducible.
	\begin{claim}
		\label{cl:nocyc}
		$G$ does not contain any cycle.
	\end{claim}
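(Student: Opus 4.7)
Suppose for contradiction that $G$ contains a cycle; by Claims~\ref{cl:no3}--\ref{cl:6cyc}, such a cycle has length at least~$7$. Let $C = v_0 v_1 \dots v_{k-1}$ be a shortest cycle in $G$, so that $C$ is induced and $k \ge 7$. Using the notation from the proof of \Cref{cl:6cyc} (cycle edges $x_i = v_iv_{i+1}$ with indices modulo~$k$, and pendant edges $y_i$ at $v_i$), the plan is to mimic that reduction: delete the five cycle-vertices $v_1, \dots, v_5$ to form a proper subgraph $G'$, use the minimality of $G$ to obtain a list strong edge-coloring $\varphi'$ of $G'$, and then extend $\varphi'$ to the uncolored edges $x_0, \dots, x_5$ and $y_1, \dots, y_5$ by Combinatorial Nullstellensatz.

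The transition from $k = 6$ to $k \ge 7$ affects the conflict graph in two competing ways. On the simplifying side, the dashed pendant-pendant edges in \Cref{fig:C6}, and more generally any edge or shared endpoint between non-consecutive pendants among $y_1, \dots, y_5$, cannot occur: such a configuration together with the corresponding cycle-path would close a cycle of length at most~$6$, contradicting one of \Cref{cl:no3,cl:no4,cl:no5,cl:6cyc} or the minimality of $|V(C)|$. Hence the factors $(y_1 - y_4)$ and $(y_2 - y_5)$ disappear from the polynomial, and only consecutive pendant-pendant conflicts $y_i y_{i+1}$ remain. On the complicating side, since $v_0$ and $v_6$ are not deleted, each retains two colored incident edges whose further neighbors are also colored, contributing additional distance-$2$ constraints that lower the number of available colors at the boundary edges $x_0, x_5, y_1, y_5$ relative to \Cref{cl:6cyc}. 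Moreover, for $k = 7$ the cycle-closing edge $x_6 = v_6v_0$ is colored, so $(x_0, x_5)$ becomes a new distance-$2$ conflict pair.

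I would then assemble the corresponding conflict graph polynomial $P_C$ and, exactly as in \Cref{cl:6cyc}, use Wolfram Mathematica to exhibit a monomial whose total degree equals the number of conflict pairs, whose individual variable degrees are strictly below the corresponding available-color list sizes, and whose coefficient is non-zero. Combinatorial Nullstellensatz then yields the required extension of $\varphi'$, contradicting the minimality of $G$.

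The main obstacle is that the polynomial arising from the 5-vertex deletion is tight for $k \ge 7$: the reduced availability at the boundary means that the sum of maximum permissible variable degrees only barely meets (or even falls short of) the number of conflict pairs, so a suitable monomial is not automatically present. Should the direct verification fail, the natural remedy is to enlarge the deleted block to $\{v_1, \dots, v_6\}$, which introduces $x_6$ and $y_6$ as new uncolored variables together with the extra slack from their available-color counts; the boundary case $k = 7$ may be handled by an analogous but separate polynomial. Once the claim is established, \Cref{cl:reg} closes the argument, since the minimum counterexample $G$ would then be a finite $3$-regular graph with no cycle, which is impossible; this completes the proof of \Cref{thm:lst10}.
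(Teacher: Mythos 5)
Your structural observations (girth $\ge 7$, hence no chords and no pendant--pendant adjacencies other than the consecutive ones, the boundary conflicts for $k=7$) are fine, but the proof has a genuine gap at its decisive step: you never establish that the conflict polynomial for your fixed window $\{v_1,\dots,v_5\}$ actually contains an admissible monomial with non-zero coefficient, and you explicitly concede that it may not. This is not a harmless omission, because the window deletion is exactly where the argument becomes delicate: with $v_0$ and $v_6$ kept, the edges $y_1$ and $y_5$ retain as few as $2$ available colors and $x_0,x_1,x_4,x_5$ as few as $4$, so (for $k\ge 8$) the sum of the maximal permissible exponents exceeds the degree of the conflict polynomial by only about $2$, and for $k=7$ the extra conflict between $x_0$ and $x_5$ (through the colored edge $v_6v_0$) and the changed availabilities force a separate polynomial. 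A plan of the form ``run Mathematica, and if it fails enlarge the block to $\{v_1,\dots,v_6\}$'' does not prove \Cref{cl:nocyc}; the fallback has the same unverified core and the same boundary tightness.

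The paper avoids this issue by a different reduction: it deletes \emph{all} of $V(C)$, so every cycle edge has at least $6$ and every pendant edge at least $4$ available colors, uniformly around the cycle, with no colored boundary inside the window. The price is that the number of variables grows with $k$, and the paper's key idea — which is exactly what your proposal is missing — is a mechanism handling all $k\ge 7$ at once: the conflict polynomial $P_{C_k}$ is processed iteratively, first extracting the coefficient of $x_2^5y_2^3x_3^5y_3^3x_4^4y_4^3$ from the subpolynomial on indices $2,3,4$, then of $x_5^5y_5^3x_6^4y_6^3$, after which multiplying by a fixed factor $A_i$ and extracting the coefficient of $x_i^4y_i^3$ reproduces, up to sign, the same expression $x_0^2y_1^2(x_{i+1}+y_{i+1})$; closing the cycle with the remaining factors in $x_0,x_1,y_0,y_1$ yields the coefficient $(-1)^k\neq 0$ of a single monomial respecting the availabilities $6$ and $4$, and Theorem~\ref{thm:null} finishes the extension. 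If you want to salvage your route, you would have to actually exhibit (and verify, including the $k=7$ case) a suitable non-vanishing coefficient for the window polynomial; as written, the claim is not proved.
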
				
	
	\begin{proofclaim}
		Let $C = v_0\dots v_{k-1}$ be a shortest cycle in $G$.
		For every $i \in \set{0,\dots,k-1}$, call the edge $x_i = v_iv_{i+1}$ (indices modulo $k$) a {\em cycle edge},
		and every non-cycle edge $y_{i+1}$ incident to $v_i$ a {\em pendant edge} (see Figure~\ref{fig:kcyc}).		
		By Claims~\ref{cl:no3}--\ref{cl:6cyc}, we have that $k \ge 7$.
		Moreover, since there is no $(k-1)$-cycle in $G$, we have that no pair of pendant edges is connected by any edge
		except by a cycle edge.
		\begin{figure}[htp!]
			$$
				\includegraphics{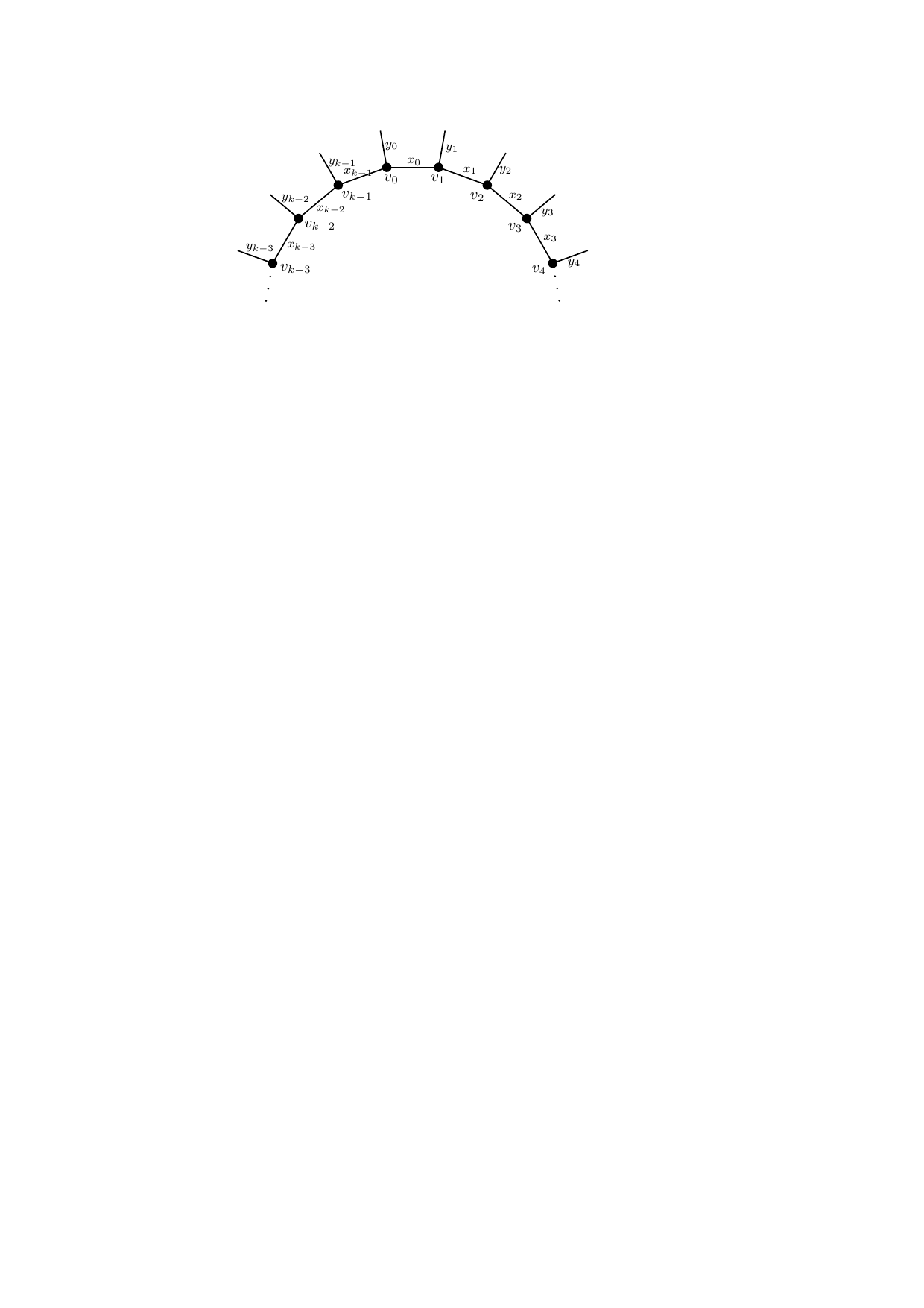}
			$$
			\caption{The hypothetical $k$-cycle $C$ in $G$.}
			\label{fig:kcyc}
		\end{figure}					
		
		Let $L$ be a list assignment for the edges of $G$ with lists of size at least $10$ for which $G$ is not strongly $L$-edge-choosable.
		Let $G'$ be the graph obtained from $G$ by removing the vertices of $C$.
		By the minimality, $G'$ admits a list strong edge-coloring $\varphi'$ with color of every edge $e \in E(G')$ from $L(e)$.
		Let $\varphi$ be the coloring of $G$ induced by $\varphi'$, where only the edges incident to the vertices in $V(C)$
		are non-colored.
		In particular, every cycle edge $x_i$ has at least $6$ available colors, and every pendant edge $y_i$ has at least $4$.
		We will show that we can extend $\varphi$ to all the edges of $G$.
		
		First, let $P_{C_k}$ be the conflict graph polynomial created on the non-colored edges with conflicts 
		between edges at distance at most $2$; taking indices modulo $k$, we have the following:
		\begin{align*}
			&P_{C_k}(x_0,\dots,x_{k-1},y_0,\dots,y_{k-1}) = \\ 
			&\prod_{i=0}^{k-1} (x_i - x_{i+1}) \cdot (x_i - x_{i+2}) \cdot (y_i - y_{i+1}) \cdot (x_i - y_{i-1}) \cdot (x_i - y_{i}) \cdot (x_i - y_{i+1}) \cdot (x_i - y_{i+2})\,.
		\end{align*}		
		Next, we prove that
		$$
			\coef{P_{C_k}}{x_0^4y_0^3 \ x_1^2y_1^2 \ x_2^5y_2^3 \ x_3^5y_3^3 \ x_4^4y_4^3 \ x_5^5y_5^3 
			\cdot \prod_{i=6}^{k-1} x_i^4y_i^3} = (-1)^{k}\,.
		$$		
		
		In order to compute the coefficient in general, we use Wolfram Mathematica~\cite{Mat24}.
		Due to the limitations of the software, we need to split our computations into several steps;
		in particular, we compute coefficients of selected subpolynomials.
		
		We begin by considering the subpolynomial $P_{C_k}^{2,3,4}$ of $P_{C_k}$, comprised of all factors containing
		$x_i$ or $y_i$ for $i \in \set{2,3,4}$.
		The polynomial $P_{C_k}^{2,3,4}$ has degree $29$ and we infer that
		$$
			\coef{P_{C_k}^{2,3,4}}{x_2^5y_2^3x_3^5 y_3^3x_4^4 y_4^3} = x_0^2 x_5^2 y_1^2 + 2 x_0^2 x_5 y_1^2 y_5 - x_0^2 x_5^2 y_5^2 - 2 x_0 x_5^2 y_1 y_5^2 + x_0^2 y_1^2 y_5^2 - x_5^2 y_1^2 y_5^2 \,.
		$$
		Here and in several subsequent cases, we slightly abuse the notation as the value of the coefficient is a polynomial, 
		which appears in the conflict graph polynomial multiplied with the monomial given as an argument.
		Note that in the resulting polynomial, no variable from the monomial appears.
		
		In the second step, we create polynomial $P_{C_k}^{5,6}$, comprised of $\coef{P_{C_k}^{2,3,4}}{x_2^5y_2^3x_3^5 y_3^3x_4^4 y_4^3}$ and multiplied with 
		all factors containing $x_i$ or $y_i$ for $i \in \set{5,6}$,
		which were not yet used in $P_{C_k}^{2,3,4}$.
		We infer that
		$$
			\coef{P_{C_k}^{5,6}}{x_5^5 y_5^3 x_6^4y_6^3} = x_0^2 y_1^2 (x_7 + y_7)\,.
		$$
		Therefore, $x_0^2y_1^2(x_7+x_7)$ is also the coeficient of the monomial $x_2^5y_2^3x_3^5 y_3^3x_4^4 y_4^3 x_5^5 y_5^3 x_6^4y_6^3$ 
		in the subpolynomial of $P_{C_k}$ containing $x_i$ or $y_i$ for all $i \in \set{2,\dots,6}$.

		Now, we define (again, indices modulo $k$) an auxiliary polynomial 
		\begin{align*}
			A_i(x_i,x_{i+1},x_{i+2},y_i,y_{i+1},y_{i+2}) =& (x_i-x_{i+1})(x_i-x_{i+2}) (x_i-y_i) \cdot \\
														 & \cdot (x_i-y_{i+1})(x_i-y_{i+2}) (x_{i+1}-y_i) (y_i-y_{i+1})\,,
		\end{align*}
		used for defining partial polynomials for each of the remaining pairs $x_i$, $y_i$.
		Let
		$$
			P_{C_k}^{7}(x_0,x_7,x_8,x_9,y_1,y_7,y_8,y_9) = \coef{P_{C_k}^{5,6}}{x_5^5 y_5^3 x_6^4y_6^3}\cdot A_7(x_7,x_{8},x_{9},y_7,y_{8},y_{9})\,.			
		$$
		Then,
		$$
			\coef{P_{C_k}^{7}}{x_7^4y_7^3} = -x_0^2 y_1^2 (x_8 + y_8)\,.
		$$
		Finally, for every $i$, $8 \le i \le k-1$, let
		$$
			P_{C_k}^{i}(x_0,x_i,x_{i+1},x_{i+2},y_1,y_i,y_{i+1},y_{i+1}) = (-1)^{i} \cdot x_0^2 y_1^2 (x_{i} + y_{i}) \cdot A_{i}(x_i,x_{i+1},x_{i+2},y_i,y_{i+1},y_{i+1})\,,
		$$
		obtaining
		$$
			\coef{P_{C_k}^{i}}{x_i^4y_i^3} = (-1)^{i} \cdot x_0^2 y_1^2 (x_{i+1} + y_{i+1})\,.
		$$
		
		In the last step, we consider the non-used factors with $x_0$, $x_1$, $y_0$, and $y_1$; we have
		\begin{align*}
			P_{C_k}^{0}(x_0,x_1,y_0,y_1) &= \coef{P_{C_k}^{k-1}}{x_{k-1}^4y_{k-1}^3} \cdot \\
										 &\cdot (x_0-x_1)(x_0-y_0)(x_0-y_1)(x_1-y_0)(x_1-y_1)(y_0-y_1)\,,
		\end{align*}
		giving us
		$$
			\coef{P_{C_k}^{0}}{x_0^4y_0^3} = (-1)^k \cdot (x_1^2 y_1^2-y_1^4) \,.
		$$
		
		This means that
		$$
			\coef{P_{C_k}}{x_0^4y_0^3 \ x_1^2y_1^2 \ x_2^5y_2^3 \ x_3^5y_3^3 \ x_4^4y_4^3 \ x_5^5y_5^3 
			\cdot \prod_{i=6}^{k-1} x_i^4y_i^3} = (-1)^{k}\,,
		$$
		which implies, by Theorem~\ref{thm:hall}, that we can always extend the coloring $\varphi$ to all the edges of $G$, a contradiction.
	\end{proofclaim}	
		
	Since $G$ must be $3$-regular by Claim~\ref{cl:reg}, 
	but it does not contain any cycle by Claims~\ref{cl:no3} to~\ref{cl:nocyc}, we obtain a contradiction establishing the theorem.
\end{proof}


\section{Graphs $G$ with $\chis{G} < \chils{G}$}
\label{sec:strlow}

In this section, we give a negative answer to Question~\ref{que:dai}
by proving Theorem~\ref{thm:infi}.
First, we recall the result about cubic graphs with strong chromatic index equal to $5$.
It uses the notion of covering graphs defined as follows.
A surjective graph homomorphism $f\colon \tilde G\to G$ is
called a \textit{covering projection} if for every vertex
$\tilde v$ of $\tilde G$ the set of edges incident with $\tilde v$ 
is bijectively mapped onto the set of edges incident with $f(\tilde v)$. 
The graph $G$ is usually referred to as the \textit{base graph} and $\tilde G$ as a
\textit{covering graph} or a \textit{lift} of $G$. 
A graph $\tilde G$ \textit{covers} $G$ if there exists such a covering projection.

\begin{theorem}[Lu\v{z}ar, M\'{a}\v{c}ajov\'{a}, \v{S}koviera, and Sot\'{a}k~\cite{LuzMacSkoSot22}]
	\label{thm:martin}
	The strong chromatic index of a cubic graph $G$ equals $5$ 
	if and only if $G$ covers the Petersen graph.
\end{theorem}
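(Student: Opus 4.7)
The plan is to prove both directions by identifying the Petersen graph $P$ with the Kneser graph on $3$-element subsets of $[1,5]$, in which two triples are adjacent if and only if they intersect in exactly one element. I will freely use the elementary lower bound $\chis{G}\ge 5$ for any cubic $G$: one edge $uv$ together with the four remaining edges incident to $u$ or $v$ forms a set of five edges pairwise at distance at most $2$, and therefore requires five distinct colors in any strong edge-coloring.

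For the \emph{if} direction, I start from a strong $5$-edge-coloring $\psi$ of $P$ (well-known; $P$ has $15$ edges that split into five induced matchings of size three) and pull it back via a covering projection $f\colon G\to P$, setting $\varphi(e):=\psi(f(e))$ for each $e\in E(G)$. To see that $\varphi$ is strong, I must check that any two distinct edges $e_1,e_2$ of $G$ at distance at most $2$ satisfy $f(e_1)\ne f(e_2)$ and have their images at distance at most $2$ in $P$. If $e_1,e_2$ share a vertex $v$, local bijectivity of $f$ at $v$ immediately gives $f(e_1)\ne f(e_2)$, with both incident to $f(v)$. If they are at distance $2$ through a common edge $e=vw$, write $e_1=vu_1$ and $e_2=wu_2$; the hypothetical equality $f(e_1)=f(e_2)$ would, together with $f(v)\ne f(w)$, force $f(u_2)=f(v)$, and then the edges $e=vw$ and $e_2=wu_2$ would both map to $\set{f(v),f(w)}$ at $f(w)$, contradicting local bijectivity at $w$.

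For the \emph{only if} direction, given a strong $5$-edge-coloring $\varphi\colon E(G)\to[1,5]$, define the \emph{palette} $f(v):=\set{\varphi(e)\colon e\ni v}\in\binom{[1,5]}{3}$ for each $v\in V(G)$. The observation above on the five edges around any edge $uv$ shows that they receive all five colors, so $f(u)\cup f(v)=[1,5]$ and $|f(u)\cap f(v)|=1$; hence $f(u)f(v)$ is always an edge of $P$ in the Kneser model. Local bijectivity of $f$ on edges is then immediate: at $v$ with incident colors $c_1,c_2,c_3$, the three neighbors $u_1,u_2,u_3$ have palettes $\set{c_i}\cup([1,5]\setminus f(v))$, which are precisely the three neighbors of $f(v)$ in $P$. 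Local bijectivity in turn forces the image of $f$ to be closed under taking neighbors, hence to consist of full connected components of $P$; since $P$ is connected (and we may work componentwise on $G$), $f$ is surjective and is the desired covering projection.

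The step I expect to be the main obstacle is ruling out the collision $f(e_1)=f(e_2)$ at distance $2$ in the \emph{if} direction, since the argument requires careful bookkeeping of the endpoints and two applications of local bijectivity (at $v$ to distinguish $f(e_1)$ from $f(e)$, and at $w$ to derive the final contradiction). Once this no-collision lemma is in hand, both directions reduce to a short translation between strong edge-colorings of cubic graphs and the Kneser combinatorics of vertex palettes.
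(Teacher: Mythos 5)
The paper does not actually prove this statement---it is imported verbatim from \cite{LuzMacSkoSot22}---so there is no internal proof to compare yours against; judged on its own, your argument is correct. The lower bound $\chis{G}\ge 5$ for cubic $G$, the pull-back in the \emph{if} direction, and the palette map in the \emph{only if} direction are all sound. In particular, the no-collision lemma works as you describe: $f(v)\neq f(w)$ because $f(e)$ is an edge of the simple graph $P$, so $f(e_1)=f(e_2)$ would force $f(u_2)=f(v)$, making the distinct edges $e$ and $e_2$ at $w$ have the same image and contradicting local bijectivity at $w$; one should then also record explicitly that $f(e_1)$ and $f(e_2)$ are distinct from $f(e)$ and both adjacent to it, hence at distance at most $2$ in $P$, which is what lets $\psi$ separate them. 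In the \emph{only if} direction, the identification of the three palettes $\set{c_i}\cup([1,5]\setminus f(v))$ with the three Kneser-neighbours of $f(v)$, together with closure of the image under neighbourhoods and connectivity of $P$, indeed yields a covering projection in the sense used in this paper (surjective homomorphism with local bijectivity on edge stars). One suggestion to make the proof fully self-contained: the strong $5$-edge-coloring of $P$ that you invoke as well known drops out of your own model---color each edge of $P$ by the unique element common to its two endpoint triples; the same palette computation you use later shows that two edges of equal color are at distance at least $3$. This palette/Kneser argument is in the spirit of the cited source, which characterizes the $k$-regular graphs attaining the lower bound $2k-1$ as covers of Kneser graphs, so your route is presumably close to the original one rather than genuinely different.
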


Let $P'$ be the graph obtained from the Petersen graph by replacing one edge 
with two pendant edges (see Figure~\ref{fig:peter2cut}).
Consider the labeling of its vertices as given in the figure.
For $1\le i\le 5$, we call the edges $u_iv_i$ the {\em spokes} of $P'$,
the edges $u_iu_{i+2}$ (indices modulo $5$) the {\em inner edges},
and the edges $v_iv_{i+1}$ (indices modulo $5$ and $i=1$ skipped) the {\em outer edges}.
\begin{figure}[htp!]
	$$
		\includegraphics{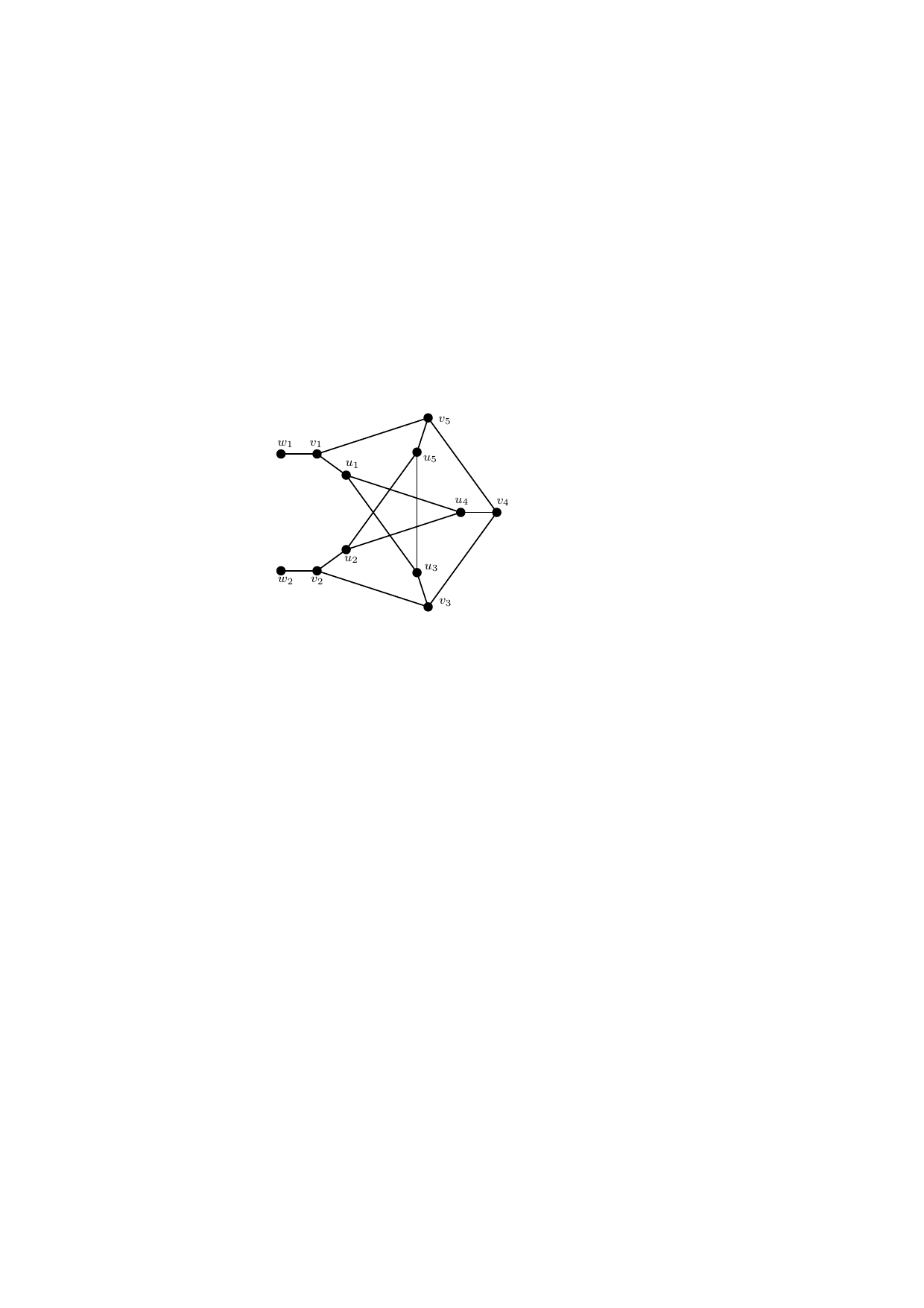}
	$$
	\caption{The graph $P'$ obtained from the Petersen graph by replacing one edge with two pendant edges.}
	\label{fig:peter2cut}
\end{figure}		



We are now ready to prove Theorem~\ref{thm:infi}.

\begin{proof}[Proof of Theorem~\ref{thm:infi}.]
	Let $R$ be a covering graph of the Petersen graph $P$.
	By Theorem~\ref{thm:martin}, we have that $\chis{R} = 5$;
	let $\varphi_R$ be a strong $5$-edge-coloring of $R$.
	
	Consider the graph $G$ obtained from $R - uv$ (for some edge $uv$ of $R$) and $P'$
	by identifying the vertices $u$ and $w_1$, 
	and $v$ and $w_2$.
	
	We first show that $\chis{G} = 5$.
	Let $\pi$ be a strong $5-$edge-coloring of $P'$.
	with the two pendant edges having the same color.
	We obtain a strong $5$-edge-coloring $\varphi$ of $G$
	by keeping the colors from $\varphi_R$ on the edges of $R-uv$,
	setting $\varphi(uv_1) = \varphi(vv_2) = \varphi_R(uv)$,
	permuting the colors of $\pi$ such that $\varphi_R(uv) = \pi(v_1w_1)$ and 
	such that the colors on the edges incident to $u$ ($v$) in $\varphi_R$ 
	are distinct from the colors incident to $v_1$ ($v_2$) in $\pi$ (this can be done, since the same color $c$
	of the two pendant edges guarantees that $c$ is the only color incident to both vertices $v_1$ and $v_2$),
	and finally setting $\varphi'(e') = \pi(e)$ for every edge $e' \in E(G)$ that corresponds to an edge $e \in E(P')$.
	Note that, by Theorem~\ref{thm:martin}, this means that $G$ is also a covering graph of $P$.
	
	\bigskip
	Next, we show that $\chils{G} > 5$.
	Let $L$ be a list assignment for $G$ such that
	$L(e) = \set{1,2,3,4,5}$ for every edge $e$ of $G$ corresponding to an inner edge of $P'$,
	$L(e) = \set{1,2,3,4,6}$ for every edge $e$ of $G$ corresponding to a spoke of $P'$,
	and 
	$L(e) = \set{1,2,3,5,6}$ for the remaining edges of $G$.
	
	Let $G'$ be the graph obtained from $G$ by removing all the edges of $P'$ except $uv_1$ and $vv_2$.
	Clearly, $G'$ is the graph $R$ with one edge removed and replaced with two pendant edges,
	and thus it admits a strong $5$-edge-coloring $\varphi^*$ induced by the coloring $\varphi$ of $R$.
	Note that in $\varphi$, the edges $uv_1$ and $vv_2$ receive the same color (the color $\varphi(uv)$).
	Now, we show that in any strong $5$-edge-coloring of $G'$ these two edges must be colored with the same color.
	Since the edges of $G'$ in $L$ have the same lists of size $5$, 
	this will imply that the two edges must receive the same color in any strong $L$-edge-coloring.
	
	First, observe that in $\varphi^*$ the only common color the vertices $u$ and $v$ are incident with
	is color $a=\varphi(uv)$. Let $b \neq a$ be a color incident with $u$, and $c \neq a$ a color incident with $v$.
	Let $k$ be the number of edges of color $b$ in $\varphi^*$.
	Since every edge of color $b$ is adjacent to edges of all other four colors, 
	the edges of $G'$ colored with $b$ in $\varphi^*$ together with their adjacent edges 
	cover all the edges of $G'$ (every edge exactly once) except $vv_2$; we denote this (almost) covering $C_b$.
	Similarly, the edges of $G'$ colored with $c$ in $\varphi^*$ together with their adjacent edges 
	cover all the edges of $G'$ (every edge exactly once) except $uv_1$; we denote this (almost) covering $C_c$.
	
	Now, let $\sigma$ be a strong $5$-edge-coloring of $G'$.
	On the edges of $C_b$, every color appears $k$ times, so together with the edge $vv_2$, 
	the color $\sigma(vv_2)$ appears $k+1$ times.
	Similarly we deduce that using the covering $C_c$, the color $\sigma(uv_1)$ appears on $k+1$ edges,
	and so $\sigma(uv_1) = \sigma(vv_2)$.
	Hence, in every strong $5$-edge-coloring of $G'$ the edges $uv_1$ and $vv_2$ must be colored with the same color.	
	
	Similarly, we can show that around the vertices $u$ and $v$ in $G$, all five colors appear 
	(i.e., the only common incident color is the color of the edges $uv_1$ and $vv_2$).
	Observe that the edges of $G'$ colored with $a$ in $\varphi^*$ (except the edges $uv_1$ and $vv_2$)
	together with their adjacent edges 
	cover all the edges of $G'$ (every edge exactly once) 
	except the edges incident with $u$ and $v$; we denote this covering $C_a$.
	Again, in every strong $5$-edge-coloring of $G'$, on the edges of $C_a$
	every color appears $k-1$ times, while in the whole graph every color appears on $k$ edges, 
	except the color of $uv_1$ and $vv_2$, which appears $k+1$ times.
	This means, that $u$ and $v$ together are incident with edges of all five colors.
	
	Now consider the coloring of the edges of $P'$.
	Clearly, in any strong $L$-edge-coloring,
	all the five colors from $\set{1,2,3,4,5}$ appear on the inner edges of $P'$.
	Similarly, since every spoke edge of $P'$ sees $4$ distinct colors on the inner edges of $P'$,
	every spoke edge can be colored with precisely one of the colors 
	from $\set{1,2,3,4}$ or color $6$, 
	except for the spoke edge that does not have color $5$ in its $2$-edge-neighborhood---that edge must be colored with $6$.
	Moreover, the edges $v_1w_1$ and $v_2w_2$ must be colored with the same color, so that we can combine the colorings of $G'$ and $P'$.
	
	There are three non-isomorphic possibilities on which inner edge color $5$ appears.
	First, suppose that $u_1u_4$ is colored with $5$.
	Then, $u_5v_5$ must be colored with $6$ and therefore $v_2v_3$ is the only outer edge of $P'$
	which can be colored with $5$ or $6$. 
	Therefore the colors $1$, $2$, and $3$ must be used on the remaining outer edges, 
	and consequently $v_1w_1$ and $v_2w_2$ must also both be colored with either $5$ or $6$.
	This is not possible, since $v_1w_1$ has both colors in it<s $2$-edge-neighborhood.	
		
	Second, suppose that $u_3u_5$ is colored with $5$.
	Then, $u_4v_4$ must be colored with $6$ and thus no outer edge of $P'$ can have color $6$.
	Since every outer edge of $P'$ also has color $5$ in the $2$-edge-neighborhood, 
	it follows that the remaining four outer edges must be colored with colors $1$, $2$, and $3$.
	This means that $v_1v_5$ and $v_2v_3$ receive the same color, say $1$.
	But then, color $1$ cannot be incident with $u$ and $v$,
	and consequently, $u$ and $v$ together will not be incident with all five colors, which is not possible by the argument above.
	
	So, we may assume that $u_1u_3$ is colored with $5$.
	Then, $u_2v_2$ must be colored with $6$, and $v_1w_1$ and  $v_2w_2$ must both be colored with the same color as $u_3u_5$, which cannot be $4$---say it is $1$.
	Then, the outer edges of $P'$ must be colored with colors from $\set{2,3,5,6}$.
	Since only $v_4v_5$ can be colored with $5$, it follows that $v_1v_5$ must have color $6$.	
	Therefore, $u_2v_2$ and $v_1v_5$ both have color $6$, which means that, 
	since $uv_1$ and $vv_2$ both have color $1$, 
	some color, different $1$, must be incident with $u$ and $v$.
	As we showed above, this is not possible, 
	and therefore a strong $L$-edge-coloring of $G$ does not exist.
\end{proof}

As already mentioned, there are planar graphs and bipartite graphs with different values of the strong chromatic index and the list strong chromatic index.
Two representative examples are the dodecahedron and the generalized Petersen graph $GP(10,3)$ (see Figure~\ref{fig:dodeca}).
\begin{figure}[htp!]
	$$
		\includegraphics{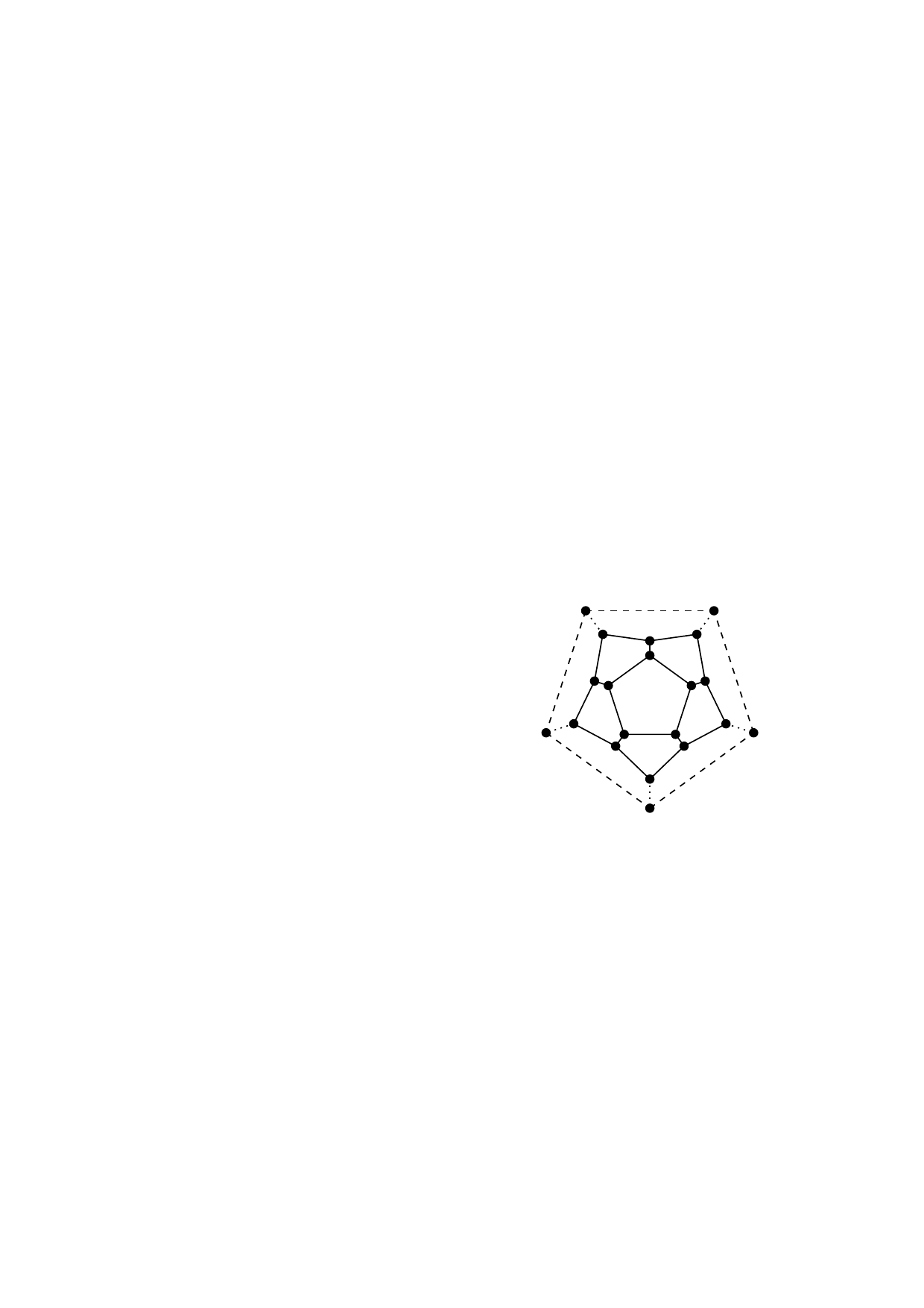}\qquad
		\includegraphics{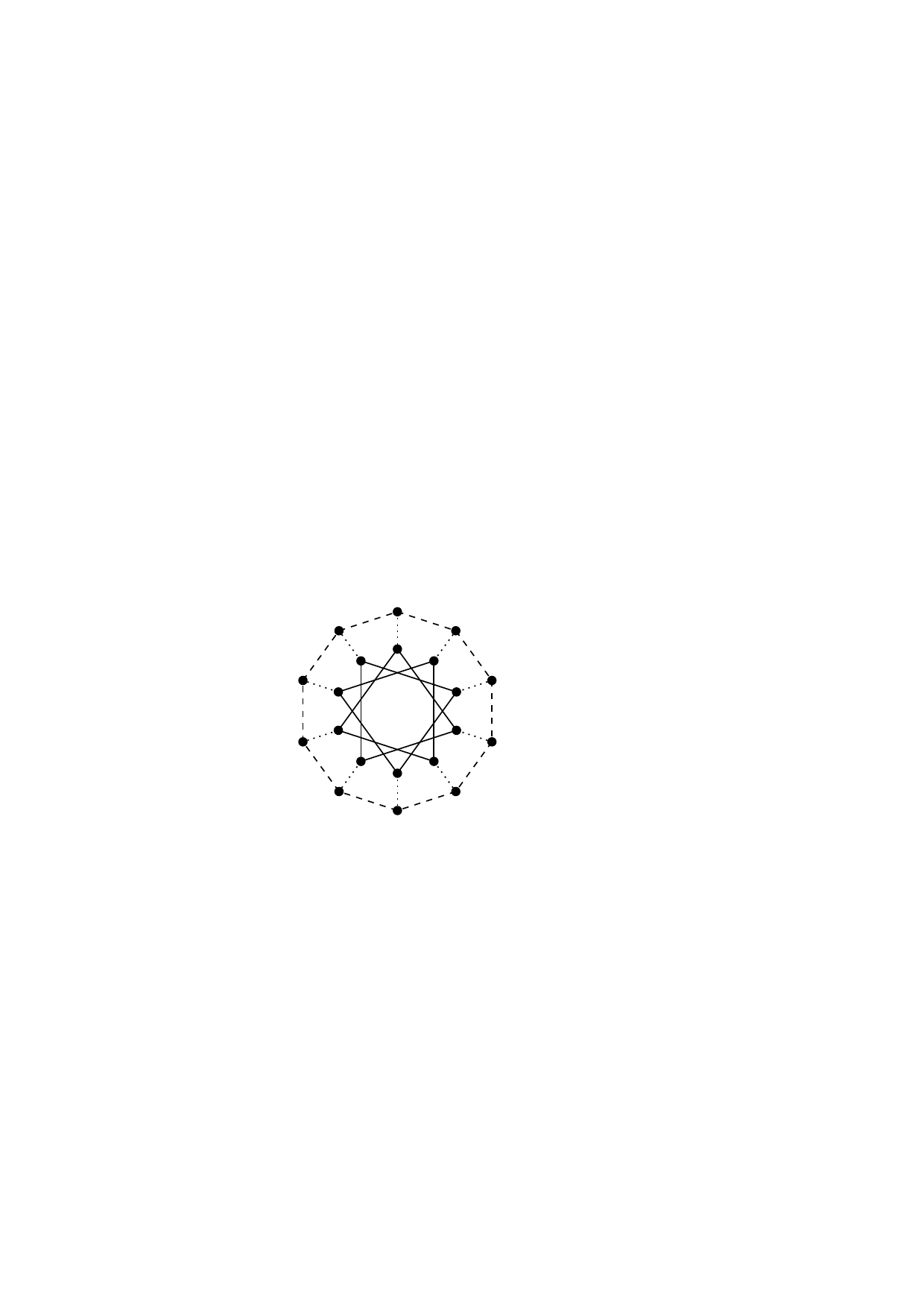}
	$$
	\caption{The dodecahedron (left) and the generalized Petersen graph $GP(10,3)$ (right).}
	\label{fig:dodeca}
\end{figure}
Both graphs cover the Petersen graph and thus their strong chromatic indices are $5$,
while neither of them is colorable from the list assignment assigning 
the list $\set{1,2,3,4,5}$ to the solid edges,
the list $\set{1,2,3,4,6}$ to the dotted edges,
and the list $\set{1,2,3,5,6}$ to the dashed edges, as they are depicted in the figure.
We omit the proof.

Theorem~\ref{thm:infi} guarantees the difference between the strong chromatic index and its list version,
but it is not clear what is the exact value of the latter.
For the special case of the Petersen graph, we are able to prove the exact bound.
\begin{proof}[Proof of Theorem~\ref{thm:pet7}]
	We first prove that the list strong chromatic index of the Petersen graph $P$ is at least $7$.
	Consider the drawing of $P$ in Figure~\ref{fig:peter}.
	Let $L$ be the list assignment assigning the list $\set{1,2,4,5,7,8}$ to the outer cycle (the dashed edges),
	the list $\set{1,3,4,6,7,9}$ to the spokes (the dotted edges),
	and the list $\set{2,3,5,6,8,9}$ to the inner cycle (the solid edges).	
	\begin{figure}[htp!]
		$$
			\includegraphics{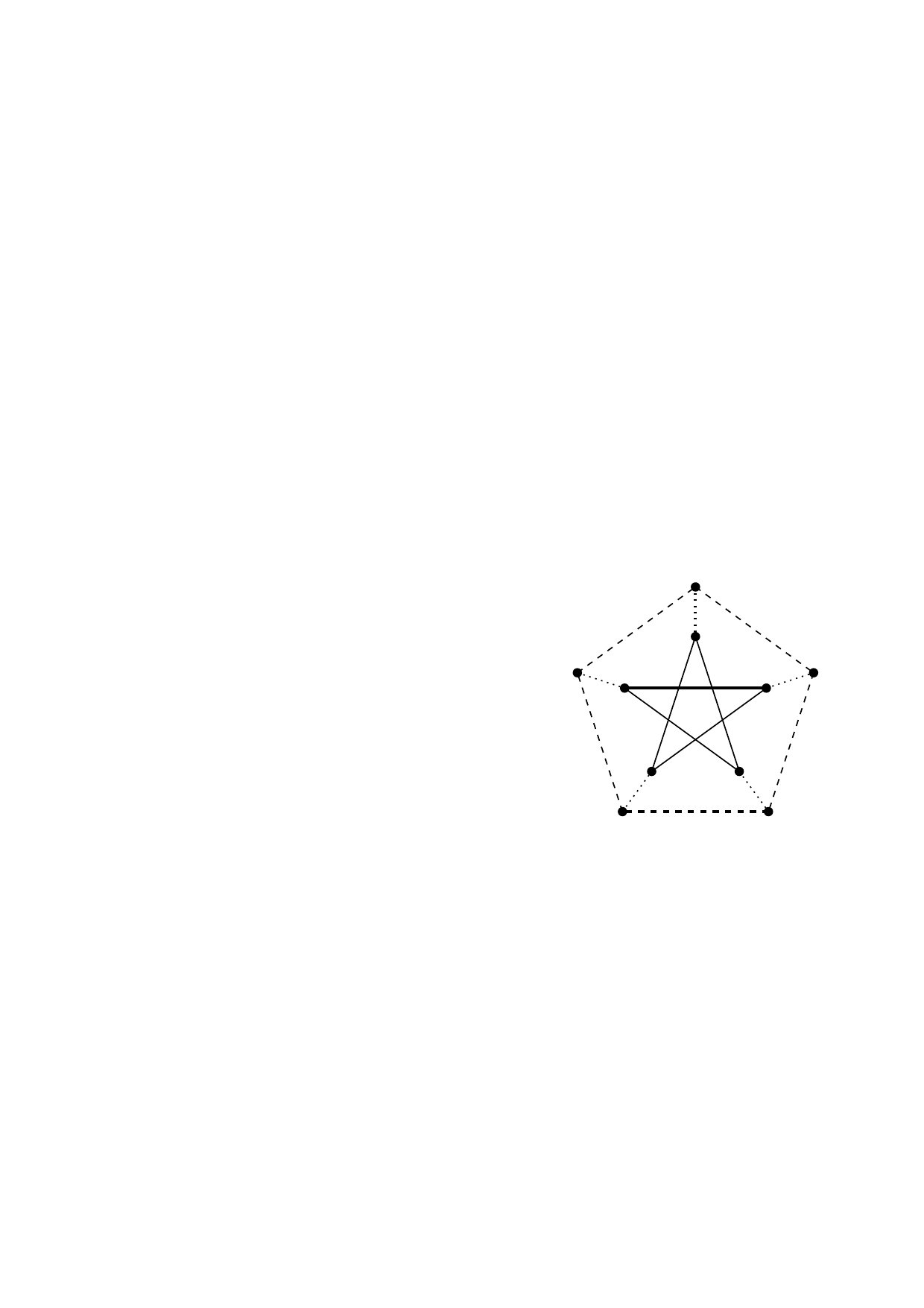}
		$$
		\caption{The Petersen graph $P$.}
		\label{fig:peter}
	\end{figure}
	Recall that every maximum induced matching in $P$ is of size $3$ and it contains precisely one edge of the outer cycle,
	one spoke, and one edge of the inner cycle (in Figure~\ref{fig:peter}, we depict one with bolder edges).
	Moreover, any pair of edges at distance $3$ belongs to exactly one maximum induced matching.
	Since there are five disjoint maximum induced matchings in $P$,
	one color can appear only on the edges of the same matching, 
	but on at most two of its edges.
	Hence, we need at least $5$ colors to color at most $10$ edges, and at least $5$ other colors to color 
	the remaining $5$ edges.
	However, we only have $9$ distinct colors in the union of lists of $L$, thus we cannot color the edges of $P$ from $L$.


	\medskip
	Now, we show that the list strong chromatic index of the Petersen graph $P$ is at most~$7$.
	Let $M_k$ denote the five disjoint maximum induced matchings in $P$ induced by the 
	edges $k_1,k_2,k_3$, for $k\in \{a, b, c, d, e\}$,
	with the labeling of the edges as shown in Figure~\ref{fig:peter1}.
	\begin{figure}[htp!]
		$$
			\includegraphics{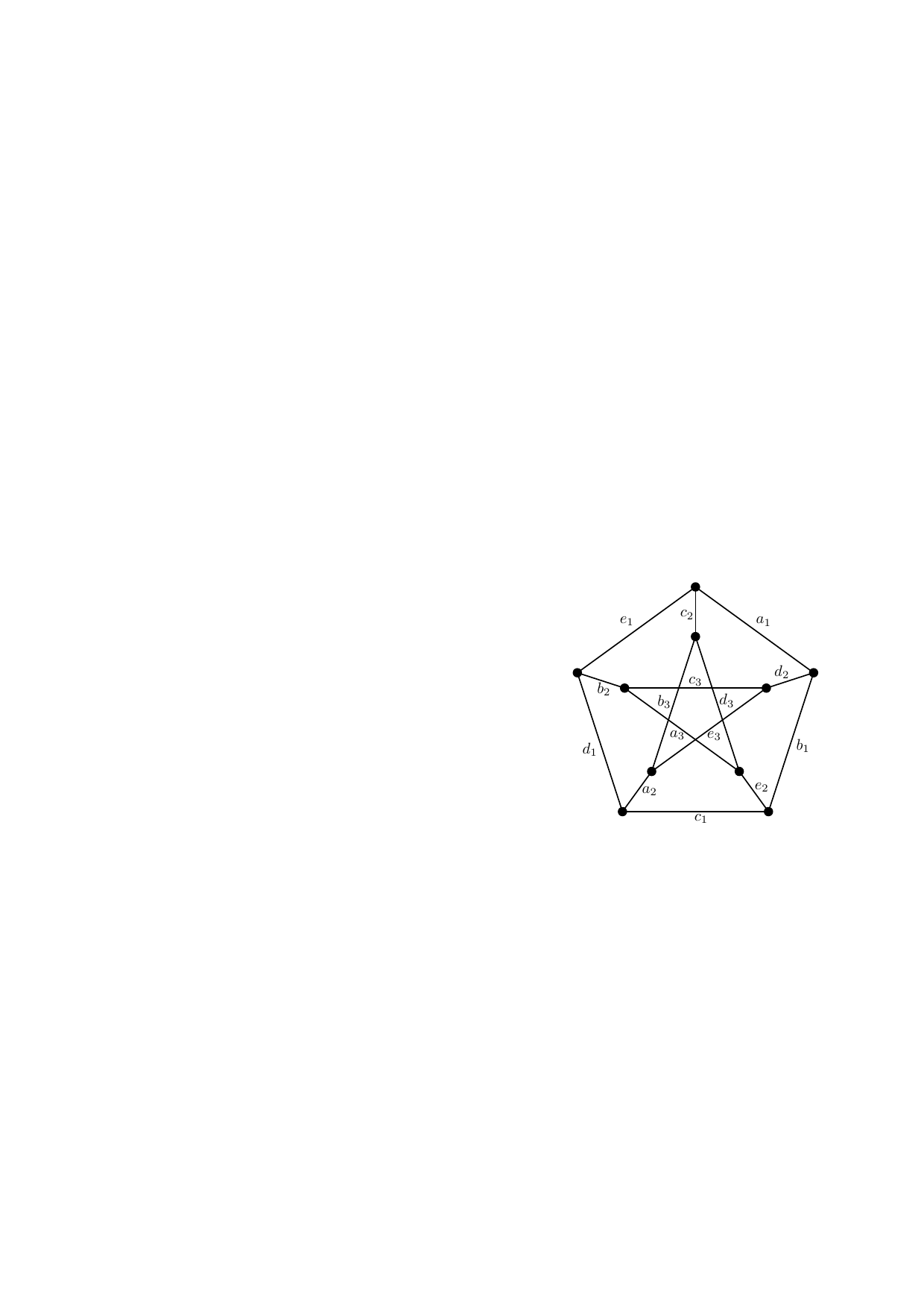}
		$$
		\caption{Five maximum induced matchings of the Petersen graph $P$.}
		\label{fig:peter1}
	\end{figure}

	In what follows, we will analyze the conflict graph polynomial $P_P$ of the Petersen graph.
	We first define an auxilliary polynomial (representing a conflict graph polynomial of two maximum induced matchings)
	\begin{align*}
		C(x_1,x_2,x_3,y_1,y_2,y_3) &= \prod_{i=1}^3 \prod_{j=1}^3 (x_i - y_j)\,.
	\end{align*}				
	Next, observe that only the edges of a particular matching can be colored by the same color, 
	and therefore each edge needs to receive a color distinct from colors of all other edges (from the other matchings).
	Hence, we have that
	\begin{align*}
		P_P(a_1, a_2, a_3, b_1, \dots, e_2, e_3) =& ~ C(a_1,a_2,a_3,b_1,b_2,b_3) \cdot C(a_1,a_2,a_3,c_1,c_2,c_3) \\
										    	  & \cdot C(a_1,a_2,a_3,d_1,d_2,d_3) \cdot C(a_1,a_2,a_3,e_1,e_2,e_3) \\
										    	  & \cdot C(b_1,b_2,b_3,c_1,c_2,c_3) \cdot C(b_1,b_2,b_3,d_1,d_2,d_3) \\
										    	  & \cdot C(b_1,b_2,b_3,e_1,e_2,e_3) \cdot C(c_1,c_2,c_3,d_1,d_2,d_3) \\
										    	  & \cdot C(c_1,c_2,c_3,e_1,e_2,e_3) \cdot C(d_1,d_2,d_3,e_1,e_2,e_3)\,.
	\end{align*}

	Now, we consider several cases regarding the possible colorings of the maximum induced matchings.
	Note that throughout the process of coloring the edges, as soon as some color is picked for an edge $e\in M_i$, this color is removed from the lists of the edges which are in conflict with $e$, 
	i.e., the edges of the maximum induced matchings different from $M_i$.

	\medskip
	\noindent {\bf Case~1.} \quad
	{\em Suppose that one maximum induced matching can be colored monochromatically.} 
		
		First, we color the edges of $M_a$, say by color $1$. 
		We distinguish three possible subcases regarding the coloring of the remaining maximum induced matchings.

		\medskip
	\noindent {\bf Case~1.1.} \quad
	{\em Suppose that one another maximum induced matching, say $M_b$, can be colored monochromatically (by color different from $1$).}

	Without loss of generality, we color $M_b$ by $2$.
	The remaining nine edges of $P$ have each at least $5$ colors available.
	Now, for the non-colored edges, we have the following conflict graph polynomial: 
	\begin{align*}
			P_{P^{-2}}(c_1, c_2, \dots, e_2, e_3) = & ~ C(c_1,c_2,c_3,d_1,d_2,d_3)  \cdot C(c_1,c_2,c_3,e_1,e_2,e_3)\\
														& \cdot C(d_1,d_2,d_3,e_1,e_2,e_3)\,.
		\end{align*}
	In $P_{P^{-2}}$, we have coefficient 
	$$
		\coef{P_{P^{-2}}}{c_1^4 c_2^3 c_3^3 d_1^3 d_2^3 d_3^3  e_1^3 e_2^3 e_3^2} = 94\,,
	$$ 
	which means, by Theorem~\ref{thm:null}, that it is possible to color the edges $c_1, c_2, c_3, d_1, d_2, d_3, e_1, e_2, e_3$ using the remaining colors of their lists.

	\medskip
	\noindent {\bf Case~1.2.} \quad
	{\em Suppose that no maximum induced matching except $M_a$ can be colored monochromatically, and at least one maximum induced matching, say 
	$M_b$, can be colored using exactly two colors.}		

Without loss of generality, we may assume that $M_b$ is colored by colors $2$ and $3$.
Since it is not possible to color any other maximum induced matching except $M_a$ with just one color (different from $1$), 
among the remaining edges, there is at least one such edge whose list does not contain both colors $2$ and $3$.
It follows that all edges have lists of size at least $4$, and at least one edge, say $c_1$, has list of size at least $5$.
For the non-colored edges, we again use the conflict graph polynomial as in Case~1, 
containing the coefficient 
$$
	\coef{P_{P^{-2}}}{c_1^4 c_2^3 c_3^3 d_1^3 d_2^3 d_3^3  e_1^3 e_2^3 e_3^2} = 94\,,
$$ 
which means that it is possible to color the non-colored edges using the colors from their lists.

	\medskip
	\noindent {\bf Case~1.3.} \quad
	{\em Suppose that no maximum induced matching except $M_a$ can be colored monochromatically, and no other maximum induced matching is colorable 
	by $2$ colors.}		

From now on we consider only the edges not included in $M_a$.
Each of the remaining maximum induced matchings consists of three edges with pairwise disjoint lists of colors of size at least $6$.
This means that the union of lists of any two edges of a maximum induced matching is of size at least $12$, and the union of lists of three edges of a 
maximum induced matching is of size at least $18$.
Consequently, for every set of at most $4$ edges it holds that the union of their color lists is of size at least $6$; 
for every set of $5$ to $8$ edges it holds that the union of their color lists is of size at least $12$; 
and for every set of $9$ to $12$ edges it holds that the union of their color lists is of size at least $18$.
Hence, we can apply Theorem \ref{thm:hall}, according to which it is possible to color all the remaining edges.

	\medskip
	\noindent {\bf Case~2.} \quad
	{\em Suppose that none of the maximum induced matchings can be colored by either $1$ or $2$ colors.}		

Thus, each of the maximum induced matchings consists of three edges with pairwise disjoint lists of colors of size $7$.
This means that the union of lists of any two edges of a maximum induced matching is of size $14$, 
and the union of lists of three edges of a maximum induced matching is of size $21$.
Therefore, for every set of at most five edges 
it holds that the union of their color lists is of size at least $7$; 
for every set of $6$ to $10$ edges it holds that the union of their color lists is of size at least $14$; 
and for every set of $11$ to $15$ edges it holds that the union of their color lists is of size at least $21$.
It follows that we can apply Theorem \ref{thm:hall}, and hence color all the edges by different colors.

\medskip
	\noindent {\bf Case~3.} \quad
	{\em Suppose that one maximum induced matching can be colored using $2$ colors (and none of them can be colored monochromatically).}	
	
	We color $M_a$, say by colors $1$ and $2$. 
	We consider two possible subcases regarding the coloring of the maximum induced matchings different from $M_a$.

	\medskip
	\noindent {\bf Case~3.1.} \quad
	{\em Suppose that none of the remaining maximum induced matchings can be colored using $2$ colors.}		

From now on, we only consider the edges not included in $M_a$.
Regarding any two (three) edges of any other maximum induced matching, we infer that the union of their color lists contains at least $10$ ($15$) colors.

Therefore, for every set of at most $4$ edges it holds that the union of their color lists is of size at least $5$; 
for every set of $5$ to $8$ edges it holds that the union of their color lists is of size at least $10$; 
and for every set of $9$ to $12$ edges it holds that the union of their color lists is of size at least $15$.
It follows that we can apply Theorem \ref{thm:hall}, and hence color all the remaining edges by different colors.

	\medskip
	\noindent {\bf Case~3.2.} \quad
	{\em Suppose that we can color at least one other maximum induced matching using $2$ colors.}

Without loss of generality, we may assume that $M_b$ is colored by colors $3$ and $4$.
Note that all these four colors can occur in the lists of other edges, but at most twice per maximum induced matching.
Regarding the setup of these colors, it follows that lists of edges of any particular maximum induced matching are of size: 
at least $3$, at least $3$, and at least $7$; 
or at least $3$, at least $4$, and at least $6$; 
or at least $3$, at least $5$, and at least $5$; 
or at least $4$, at least $4$, and at least $5$.
Note that these color lists may not be disjoint.

As above, for the non-colored edges we have the following conflict graph polynomial: 
\begin{align*}
		P_{P^{-2}}(c_1, c_2, \dots, e_2, e_3) = & ~ C(c_1,c_2,c_3,d_1,d_2,d_3)  \cdot C(c_1,c_2,c_3,e_1,e_2,e_3)\\
													& \cdot C(d_1,d_2,d_3,e_1,e_2,e_3)\,.
	\end{align*}
We consider the four cases regarding the sizes of the lists of the edges of maximum induced matchings as listed in the previous paragraph.

First, suppose there exists a maximum induced matching with lists of colors of the edges of sizes at least $3$, at least $3$, and at least $7$.
Then, in $P_{P^{-2}}$, we have the coefficients 
$$
	\coef{P_{P^{-2}}}{c_1^2 c_2^2 c_3^6 d_1^2 d_2^2 d_3^4  e_1^2 e_2^3 e_3^4} = -14\,,
$$
and
$$
	\coef{P_{P^{-2}}}{c_1^2 c_2^2 c_3^6 d_1^2 d_2^2 d_3^4  e_1^2 e_2^2 e_3^5} = -6\,,
$$ 
regarding the monomials fitting the possible sizes of the lists of remaining maximum induced matchings.
Therefore, by Theorem~\ref{thm:null}, it is possible to color the remaining edges.

If there is no maximum induced matching with the properties as in the previous case, 
then suppose that there is one with lists of colors of sizes at least $3$, at least $5$, and at least $5$.
Then, in $P_{P^{-2}}$, we have the coefficient 
$$
	\coef{P_{P^{-2}}}{c_1^2 c_2^4 c_3^4 d_1^2 d_2^3 d_3^4  e_1^2 e_2^2 e_3^4} = 60\,,
$$ 
which means that it is possible to color the remaining edges.

Now, we may assume that there exists no maximum induced matching which satisfies properties of previous cases.
Suppose that there is one maximum induced matching with lists of colors of sizes at least $3$, at least $4$, and at least $6$.
Then, in $P_{P^{-2}}$, there is the coefficient 
$$
	\coef{P_{P^{-2}}}{c_1^2 c_2^3 c_3^5 d_1^2 d_2^3 d_3^4  e_1^2 e_2^2 e_3^4} = 33\,,
$$  
which means that it is possible to color the remaining edges.

Lastly, suppose that all of the remaining maximum induced matchings have lists of colors of the edges of sizes at least $4$, at least $4$, and at least $5$.
Then, in $P_{P^{-2}}$, we have the coefficient 
$$
	\coef{P_{P^{-2}}}{c_1^3 c_2^3 c_3^4 d_1^2 d_2^3 d_3^4  e_1^1 e_2^3 e_3^4} = 36\,,
$$ 
which again means that it is possible to color the remaining edges.

Thus, the list strong chromatic index of $P$ is $7$.
\end{proof}

\section{Graphs $G$ with $\chin{G} < \chiln{G}$}
\label{sec:norlow}

In this section, we consider the results on list normal edge-coloring.
As already mentioned, lists of size at least $10$ are always enough 
to find a normal list edge-coloring of a cubic graph.
We do not know whether this bound is tight;
currently, there are only examples of graphs with list normal chromatic index equal to $9$.

\begin{theorem}
	\label{thm:nor9}	
	There is an infinite family of cubic graphs with list normal chromatic index at least $9$.
\end{theorem}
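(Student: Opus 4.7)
The plan is to exhibit a single cubic graph $G_0$ with $\chiln{G_0}\ge 9$ and obtain the required infinite family by taking $n$ vertex-disjoint copies of $G_0$, since the list normal chromatic index of a disjoint union of cubic graphs equals the maximum over its components. A natural candidate for $G_0$ is the cubic graph built from two copies of the ``subdivided-$K_4$'' gadget---the structure witnessing tightness of $\chin{G}\le 7$ recalled in the introduction---joined by a bridge $ww'$ between their two subdivision vertices.

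The first step is a rigidity lemma for a subdivided $K_4$ with the usual labeling $a,b,c,d,w$, where $w$ is the subdivision vertex on the edge $ab$, appearing as a subgraph of a cubic graph. I would prove that in any normal edge-coloring the middle edge $cd$ must be rich: if $cd$ were poor, the two color-pairs at $c$ and $d$ would have to coincide, and a short case analysis then forces $aw$ and $bw$ to share a color at $w$, violating properness. Once $cd$ is rich, the edges $ac,ad,bc,bd,cd$ carry five pairwise distinct ``inner'' colors; the induced richness of $ac$ and $ad$ in turn excludes every inner color from $aw$ and symmetrically from $bw$, so these two ``outside'' colors lie outside the inner quintet. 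Applied to both gadgets of $G_0$, the four neighbors of the bridge $ww'$ are the four subdivision edges, whose colors form two $2$-sets $O_1,O_2$, each disjoint from the respective inner $5$-set $I_1,I_2$. Normality of $ww'$ then enforces a clean dichotomy: either $O_1=O_2$ (bridge poor, only two outside colors) or $O_1\cap O_2=\emptyset$ (bridge rich, four outside colors), since $|O_1\cap O_2|=1$ would leave three distinct neighbor colors.

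The second step is to design a list assignment $L$ with $|L(e)|=8$ for every edge, drawn from a pool of only nine colors, that blocks both alternatives. Because the rich case requires $|I_1\cup I_2|+4\le 9$, hence $I_1=I_2$, it suffices to restrict the lists on the inner edges of the two gadgets so that the two inner $5$-sets cannot coincide; and to restrict the lists on the subdivision edges so that $O_1$ and $O_2$ cannot coincide either. The main obstacle is that the inner colors inside each gadget are rigid only up to the symmetries of $K_4$, so the lists must simultaneously rule out every compatible pairing of the two gadgets' inner colorings and of their outside pairs---including the crossover sub-cases in which the bridge color equals the color of $cd$ or of $c'd'$, and the sub-case where $I_1$ and $I_2$ overlap on exactly four elements within the permitted lists. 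I would handle this by treating the forbidden color of each edge's list as a design variable and running a finite case analysis over the essentially distinct configurations, invoking Theorem~\ref{thm:hall} in reverse to certify that the residual systems of available colors admit no system of distinct representatives.
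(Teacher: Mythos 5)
Your rigidity analysis of the subdivided-$K_4$ gadget is correct (if $cd$ is poor one indeed forces $\varphi(aw)=\varphi(bw)$, so $cd$ is rich, the five inner edges get distinct colors, the two subdivision edges get two further colors, and the bridge forces $O_1=O_2$ or $O_1\cap O_2=\emptyset$). The gap is in the second step: the blocking list assignment, which is the entire content of the theorem, is never exhibited, and the design space you commit to is provably empty. First, the deduction ``the rich case requires $|I_1\cup I_2|+4\le 9$, hence $I_1=I_2$'' is invalid: the spoke edges of one gadget are at distance $3$ from the inner edges of the other, so neither properness nor any normality condition prevents $O_1$ from meeting $I_2$ (e.g.\ $I_1=[1,5]$, $O_1=\{6,7\}$, $I_2=\{1,2,3,6,7\}$, $O_2=\{8,9\}$, bridge colored $5$ is a legitimate rich-bridge pattern). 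Second, and decisively, no assignment of $8$-lists drawn from a pool $P$ of nine colors can block all normal colorings of your $G_0$: each edge then misses exactly one color $f(e)$, and one can always realize the poor-bridge pattern $I_1=I_2=T\cup\{z\}$, $\varphi(cd)=\varphi(c'd')=\varphi(ww')=z$, $O_1=O_2=\{x,y\}$. Indeed, choose $z\in P\setminus\{f(ww'),f(cd),f(c'd')\}$ (at least six choices); choose a $4$-set $T\subseteq P\setminus\{z\}$ avoiding the common forbidden color of $\{ac,ad,bc,bd\}$ and of $\{a'c',a'd',b'c',b'd'\}$ in case either quadruple forbids one and the same color (at most two colors to avoid among eight); then choose $\{x,y\}\subseteq P\setminus(\{z\}\cup T)$ avoiding the common forbidden color of $\{aw,bw\}$ and of $\{a'w',b'w'\}$ if such exists (at most two bad colors among the four remaining). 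With these choices Hall's condition holds for assigning the colors of $T$ bijectively to each gadget's four inner edges and $\{x,y\}$ to each spoke pair (it could only fail if all edges of a group forbade one common color of the target set, which was excluded), and the resulting coloring is normal -- every gadget edge is rich and the bridge is poor -- and respects every list. So $\chiln{G_0}\le 8$ holds for every nine-color pool assignment, and your case analysis has nothing to certify. Even allowing arbitrary $8$-lists it is doubtful that $G_0$ works: inside a gadget only its seven edges pairwise conflict, so $8$-lists always color each gadget greedily, and the only coupling left is the weak ``$O_1=O_2$ or disjoint'' condition plus the avoidance set of the bridge; you give no argument that this can be obstructed.

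For comparison, the paper's proof uses a different configuration and a different mechanism: $H_I$ is $K_{3,3}$ with one edge subdivided, with a pendant edge attached at the subdivision vertex, and the list assignment gives the two chosen edges \emph{private} lists ($[9,16]$ and $[17,24]$) disjoint from the common list $[1,8]$ of the other nine edges. The private colors force all edges adjacent to those two edges to be rich, and a short case analysis then shows the nine remaining edges cannot be accommodated with eight colors. The crucial point your proposal misses is exactly this disjoint-list trick combined with a configuration rigid enough that forced richness squeezes nine mutually interacting edges against eight colors; your two-gadget graph is too loose for that, since all its gadget edges are forced rich already and no nine of its edges are tied together strongly enough.
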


\begin{proof}
	In order to prove the theorem, we will show that if a cubic graph $G$ contains the configuration $H_I$ depicted in Figure~\ref{fig:9colors},
	then there is a list assignment for the edges of $H_I$, for which $G$ does not admit a list normal edge-coloring.
	We use the labeling of the vertices as given in the figure.
	\begin{figure}[htp!]
		$$
			\includegraphics{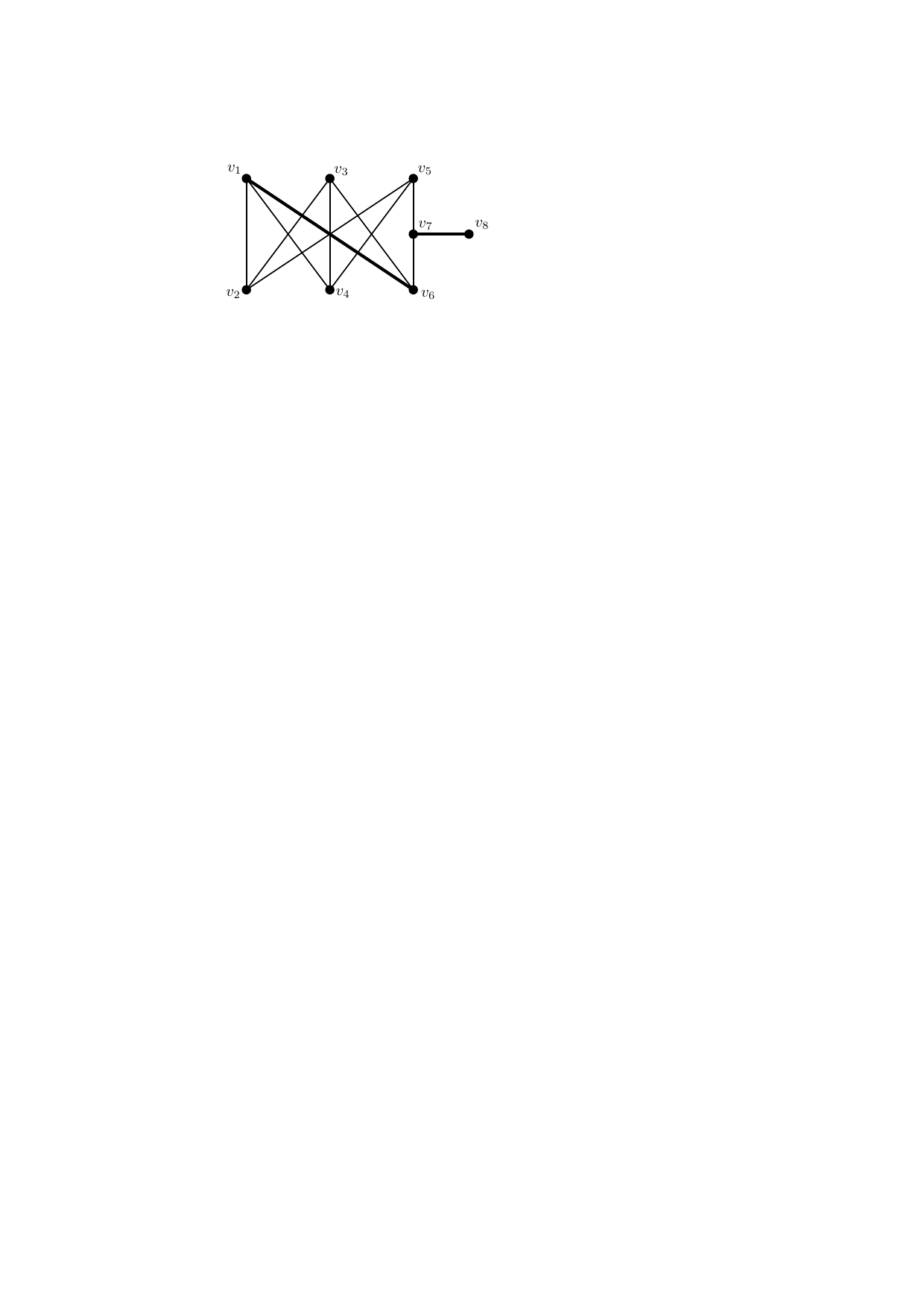}
		$$
		\caption{The configuration $H_I$ which is not list normal $8$-colorable if the two bold edges each receive a list disjoint 
			from all other lists in a given list assignment.}
		\label{fig:9colors}
	\end{figure}		

	Let $L$ be a list assignment for $G$ such that for every edge $e$ of $H_I$, except $v_1v_6$ and $v_7v_8$,
	we have $L(e) = [1,8]$. For the two special edges, we use $L(v_1v_6) = [9,16]$ and $L(v_7v_8) = [17,24]$.
	Without loss of generality, we may assume that $v_1v_6$ is colored by $9$, and $v_7v_8$ with $17$.
	There are nine remaining (thin) edges, which can altogether receive the eight distinct colors from $[1,8]$,
	so at least one pair must receive the same color. We will show that this is not possible.
	Note that the edges adjacent to the edges $v_1v_6$ and $v_7v_8$ must all be rich, 
	since they are adjacent to an edge with a unique color.

	First, without loss of generality, we color the edge $v_1v_2$ by $1$. 
	Since $v_1v_4$ must be rich, the edges $v_3v_4$ and $v_4v_5$ must be colored differently, with colors distinct from $1$,
	say with $2$ and $3$, respectively.

	Suppose now that the edge $v_3v_6$ also receives color 1. 
	Then, the edge $v_1v_6$ must be poor, meaning that the edges $v_1v_4$ and $v_6v_7$ receive the same color, say $4$. 			
	Similarly, the edge $v_2v_3$ must be poor, and so the edges $v_2v_5$ and $v_3v_4$ must be colored the same,
	which means that $v_2v_5$ also receives color $2$.			
	But now $v_4v_5$ must be poor, and thus $v_5v_7$ must be colored with $4$, which is not possible.

	Next, suppose that $v_3v_6$ is colored with $3$. 
	Then $v_3v_4$ must be poor and so the edges $v_1v_4$ and $v_2v_3$ must receive the same color,
	which is not possible, since $v_1v_2$ is rich.

	So, we may assume that $v_3v_6$ is colored with, say, $4$.
	Suppose first that $v_1v_4$ is colored with $4$. 
	Then, $v_3v_4$ is poor and $v_2v_3$ is colored with $3$.
	But now both $v_2v_5$ and $v_1v_6$ must be poor, 
	meaning that both $v_5v_7$ and $v_6v_7$ must be colored with $1$, a contradiction.	
	Therefore, we may assume that $v_1v_4$ is colored with a new color, say $5$,
	and consequently that $v_3v_4$ is rich, giving 
	that $v_2v_3$ receives a new color, say $6$.
	Since $v_1v_2$ and $v_2v_3$ are rich,
	a new color is given also to $v_2v_5$, say $7$.
	Finally, note that the edges adjacent to $v_5v_7$ are all rich and consequently it must be colored with $8$,
	which means that there is no available color for $v_6v_7$.
	Thus, for the given $L$, the graph $G$ does not admit a list normal edge-coloring, 
	and therefore $\chiln{G} > 8$.		
\end{proof}

In the above described family, every graph contains a bridge. 
As Conjecture~\ref{con:Nor} considers bridgeless cubic graphs only, it is natural to ask whether there are graphs
with the list normal chromatic index greater than $5$. 
The next example shows that even lists of size $7$ in the list assignment are sometimes not sufficient.
\begin{theorem}
	\label{thm:nor8}	
	There are bridgeless cubic graphs with list normal chromatic index at least $8$.
\end{theorem}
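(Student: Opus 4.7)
The plan is to exhibit a specific bridgeless cubic graph $G$ together with a list assignment $L$ satisfying $|L(e)| = 7$ for every edge $e$, and to show that $G$ admits no normal $L$-edge-coloring. The strategy will closely parallel the proof of Theorem~\ref{thm:nor9}: design a local configuration in which a few selected edges receive lists completely disjoint from all other lists, so that their colors become unique in the whole graph; every neighbour of such an edge is then forced to be rich, and the remaining edges are restricted to a small common palette on which a short case analysis is performed.

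First I would describe a small bridgeless cubic graph $G$ containing an analogue $H$ of the configuration $H_I$, where the implicit bridges in the use of $H_I$ inside a larger host graph are replaced by a bridgeless connector, for example a second copy of $H$ attached through a short matching, or a small cycle glued in to restore 2-edge-connectivity. Next, I would define $L$ by assigning to each distinguished ''boundary'' edge its own palette of seven colors, disjoint from every other list, and by assigning the common palette $[1,7]$ to every interior edge. Because the distinguished edges are colored from pairwise disjoint color sets, their colors appear nowhere else; consequently every edge adjacent to a distinguished edge must be rich. From this point on, all remaining freedom lies in the interior of $H$, where only seven colors are available.

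The core of the argument then becomes a case analysis on the colors of the interior edges, organized by the choice of color on a single ''root'' edge and by the subsequent rich/poor propagation triggered by the forced richness of the boundary edges, exactly as in the proof of Theorem~\ref{thm:nor9}. Each case should end with an edge that has no admissible color: either no color in its list keeps the coloring proper, or every remaining color makes it neither rich (four distinct colors on its neighbors) nor poor (two distinct colors on its neighbors).

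The main obstacle will be designing the bridgeless connector so that the rigidity provided by the bridges of $H_I$ is preserved purely through list disjointness. In the bridged construction, the fact that a bridge has a globally unique color automatically forces its neighbors to be rich; after the bridges are removed this automatic forcing must be reproduced by the palette structure alone, and without over-inflating the number of subcases. I expect to resolve this by choosing the connector to be symmetric and small, so that the branching of the case analysis reduces to essentially the same tree as in Theorem~\ref{thm:nor9} but one level shallower, reflecting the reduction from nine to eight colors.
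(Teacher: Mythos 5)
Your overall strategy is the same as the paper's: pick a specific bridgeless cubic graph, give a few designated edges pairwise disjoint $7$-element lists (so their colors are globally unique and every adjacent edge is forced to be rich), give every other edge the common palette $[1,7]$, and derive a contradiction by forcing. However, as written the proposal does not prove the statement. The theorem is an existence claim whose entire content is an explicit example together with a verified non-colorability argument, and you supply neither: you do not specify the graph, you do not say which edges carry the disjoint lists, and you do not carry out (or even outline the shape of) the case analysis. The step you yourself flag as ``the main obstacle'' --- reproducing, in a bridgeless graph and with only seven colors, the rigidity that the bridged configuration $H_I$ enjoys with eight --- is exactly where all the work lies, and deferring it with ``I expect to resolve this'' leaves the proof with a genuine gap.

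Moreover, your guess that the analysis will be ``essentially the same tree as in Theorem~\ref{thm:nor9} but one level shallower'' does not match how the argument actually closes. In the paper's example (a $12$-vertex graph consisting of two $6$-vertex halves joined by a $2$-edge-cut $\set{v_5u_5,\,v_6u_6}$), \emph{three} special edges are needed, not two, and one of them ($v_5u_5$) lies on the $2$-edge-cut itself. The contradiction is not that some edge runs out of colors, as in Theorem~\ref{thm:nor9}; rather, the forcing in the left half restricts the second cut edge $v_6u_6$ to colors already used on edges adjacent to $v_5u_5$, while the forcing in the right half pins $v_6u_6$ to the color of $u_2u_5$, so in every case two neighbours of $v_5u_5$ coincide in color, contradicting the forced richness of $v_5u_5$. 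This interplay across the cut is a different mechanism from the one you propose to transplant, so even granting your connector construction, the case analysis you anticipate would not obviously terminate in a contradiction; you would need to design the placement of the special edges with this cut-based collision in mind and then verify it explicitly.
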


\begin{proof}
	As an example of a bridgeless cubic graph with the list normal chromatic index at least $8$, 
	we use the graph $G$ depicted in Figure~\ref{fig:8colors}.
	We will present a list assignment for the edges of $G$, for which $G$ does not admit a list normal $7$-edge-coloring.
	We use the labeling of the vertices as given in the figure.
	\begin{figure}[htp!]
		$$
			\includegraphics{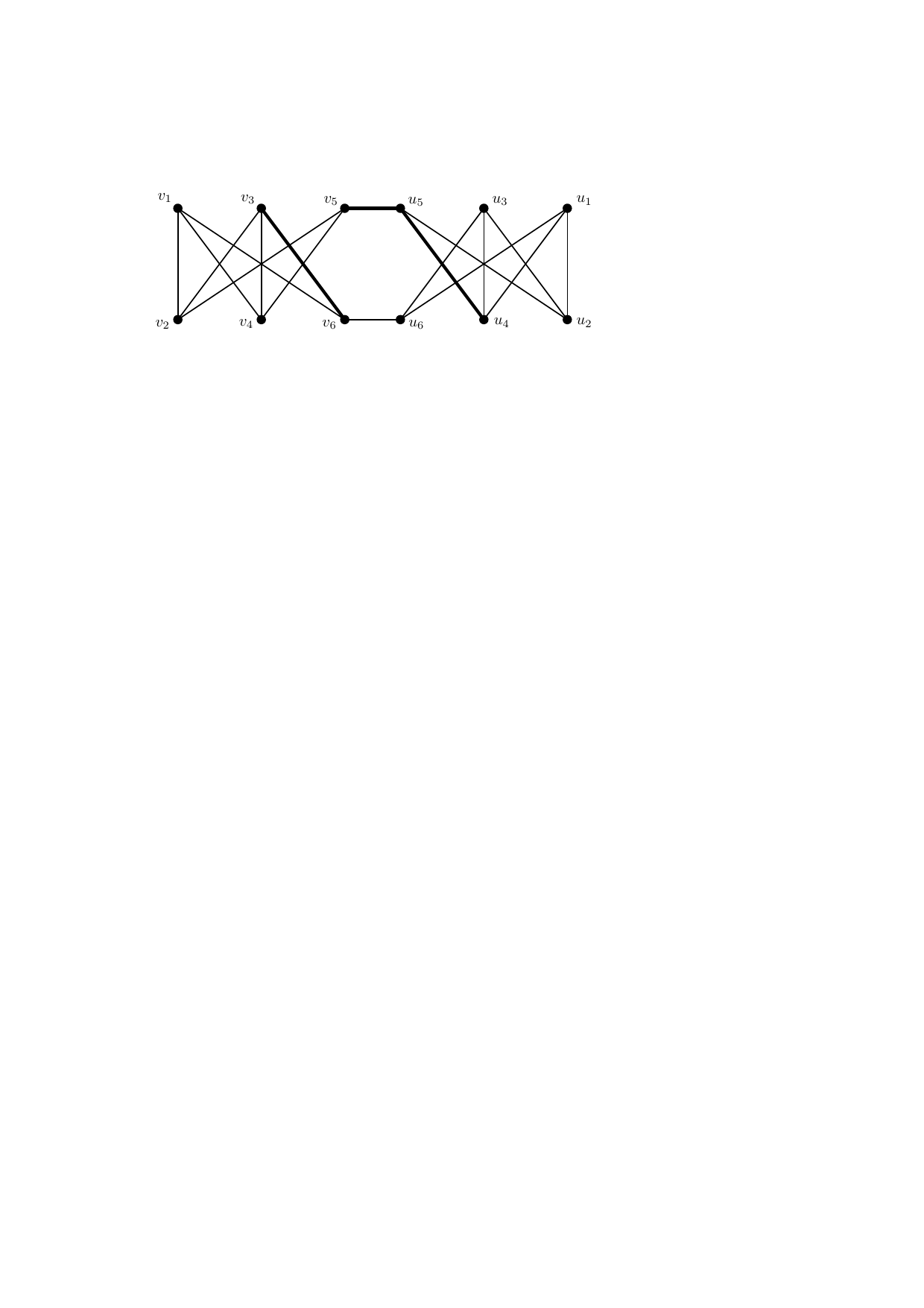}
		$$
		\caption{A bridgeless cubic graph which is not list normal $7$-colorable if the three bold edges each receive lists disjoint 
			from all other lists.}
		\label{fig:8colors}
	\end{figure}		
	
	Let $L$ be a list assignment for $G$ such that every edge $e$ of $G$, except $v_3v_6$, $v_5u_5$, and $u_4u_5$,
	has $L(e) = [1,7]$. 
	The three special edges have $L(v_3v_6) = [8,14]$, $L(v_5u_5) = [15,21]$, and $L(u_4u_5) = [22,28]$.
	Note that this setting implies that all the edges adjacent to these three edges must be rich.
	
	First, we color the three special edges; without loss of generality, 
	we color $v_3v_6$ with $8$, $v_5u_5$ with $15$, and $u_4u_5$ with $22$. 
	Next, we color the edges $v_1v_2$, $v_1v_4$, and $v_1v_6$ with, say, $1$, $2$, and $3$, respectively. 
	
	Now, the edge $v_4v_5$ cannot receive color $1$, since the edge $v_2v_5$ must be rich.
	If we color $v_4v_5$ with $3$, then $v_1v_4$ must be poor and $v_3v_4$ colored with $1$,
	which is not possible, since $v_2v_3$ must be rich.
	So, we assign to $v_4v_5$ color $4$.		
	Next, the edge $v_3v_4$ cannot receive any color from $\set{1, 2, 3, 4}$ (since $v_1v_4$ must be rich), 
	and thus we color it with $5$. 
	Since $v_3v_4$ must be rich, $v_2v_3$ cannot be colored with $2$,
	and since $v_4v_5$ must be rich, $v_2v_5$ cannot be colored with $2$.
	This means that $v_1v_2$ must be rich and consequently,
	$v_2v_3$ and $v_2v_5$ cannot receive any color from the set $\set{1, 2, 3, 4, 5}$.
	Therefore, we assign, say, color $6$ to $v_2v_3$ and color $7$ to $v_2v_5$. 
	
	At this point, the only possible colors the edge $v_6u_6$ can receive are $4$ and $7$. 	
	Note that these are exactly the colors already assigned to the edges adjacent to $v_5u_5$.
	
	\medskip
	Now consider the other non-colored edges of the graph. 
	Let $\set{a,b,c,d,e,f,g} = [1,7]$.	
	We can color the edges $u_1u_2$, $u_1u_4$, and $u_1u_6$ by three distinct colors, say $a$, $b$, and $c$, respectively. 
	Since $u_1u_4$ must be rich, $u_3u_4$ must be colored with a color distinct from the previous three, say with $d$. 	
	Since the edges $u_3u_4$ and $u_4u_5$ are rich, none of the edges $u_2u_3$ and $u_2u_5$ can be colored with $b$, 
	and therefore the edge $u_1u_2$ must be rich. 
	Moreover, $u_2u_5$ cannot receive color $d$, since $u_4u_5$ must be rich.
	Therefore, we color $u_2u_3$ by $e$ and $u_2u_5$ by $f$. 	
	The edge $u_3u_6$ cannot receive any color from the set $\{a, b, c, d, e, f\}$ 
	(since $u_2u_3$ and $u_3u_4$ must be rich), 
	so we must color it with the only remaining color $g$.
	Finally observe that the only possible color the edge $v_6u_6$ can receive is $f$,
	which is the same color as the color of $u_2u_5$. 
	This means that two of the edges adjacent to $v_5u_5$ (which must be rich) 
	must receive the same color, 
	this contradicts the fact that $v_5u_5$ must be rich.
\end{proof}

The graph in the proof of Theorem~\ref{thm:nor8} has a $2$-edge-cut.
So, again a question arises whether there are cubic graphs with high list normal chromatic index 
and high connectivity. 
We focused on cyclically $4$-edge-connected cubic graphs and surprisingly
there are such graphs with list normal chromatic index at least $7$.

\begin{theorem}
	\label{thm:nor7}	
	There is an infinite family of cyclically $4$-edge-connected cubic graphs with list normal chromatic index at least $7$.
\end{theorem}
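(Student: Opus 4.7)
The plan is to continue the strategy of Theorems~\ref{thm:nor9} and~\ref{thm:nor8}: design a local gadget $H_{III}$, embed it into a cyclically $4$-edge-connected cubic graph, and use a list assignment in which one or two \emph{anchor} edges of $H_{III}$ receive lists disjoint from the common palette $[1,6]$. Since every color on an anchor edge is unique in the coloring, each edge adjacent to an anchor must be rich, and a careful local case analysis should then force more distinct colors from $[1,6]$ than the palette allows. The key design challenge is that the gadget must now be compatible with cyclic $4$-edge-connectivity, so we cannot afford bridges or $2$-edge-cuts at its boundary; the anchors must therefore be interior edges of a suitably dense subgraph, and the gadget must attach to the remainder of the graph via at least four disjoint cycle-crossing edges.

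First, I would build $H_{III}$ by taking a small, highly symmetric cubic piece (for instance, a subdivided portion of the Petersen graph, or a chain of $K_4$-minus-an-edge blocks glued along short cycles) containing a single anchor edge $ab$; this edge would receive $L(ab) = [7,12]$ while every other edge of the gadget receives $L(e) = [1,6]$. Without loss of generality the anchor is colored with~$7$, forcing each of the four edges adjacent to $ab$ to be rich. I would then enumerate the possible color patterns around $ab$ up to relabeling: richness along one of these four edges determines the multiset of colors at the opposite endpoint, and after two or three steps of propagation the richness constraints together with the requirement that every neighbor of the second boundary of the gadget uses only colors from $[1,6]$ should force the existence of two edges that must simultaneously share a color (making an adjacent edge poor) and be distinct (because a neighboring edge must be rich).

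Second, to obtain an infinite family, I would insert $H_{III}$ into any fixed cyclically $4$-edge-connected cubic host and then iterate a connectivity-preserving expansion. A convenient expansion is to replace an arbitrary edge $xy$ of the host, disjoint from $H_{III}$, by the Petersen graph with one edge removed (joining the two pendants to $x$ and $y$); this operation preserves cyclic $4$-edge-connectivity and yields cubic graphs of unbounded order, each still containing $H_{III}$ with the same forced coloring obstruction. Verifying that no new $2$- or $3$-edge cut arises at the attachment requires only checking that both the host and the inserted piece are themselves cyclically $4$-edge-connected and that the attaching edges cross a cycle on each side.

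The principal obstacle is the local case analysis inside $H_{III}$: unlike in Theorem~\ref{thm:nor9}, where the bridge pinned down a rigid local pattern, here the absence of small edge cuts means that many more color patterns around the anchor are \emph{a priori} consistent, and richness only propagates two edges deep before the gadget boundary is reached. I therefore expect that $H_{III}$ will need to contain a second anchor edge (with list $[13,18]$) placed so that the two rich-neighborhoods overlap on a short path, enabling a clean pigeonhole contradiction on $[1,6]$; finding such a gadget that simultaneously admits cyclic $4$-edge-connectivity, yields a complete obstruction, and extends to an infinite family will be the crux of the proof, and the case analysis should be organized by the number of distinct colors appearing on the edges in the intersection of the two rich neighborhoods.
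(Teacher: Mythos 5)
Your proposal is a strategy outline rather than a proof, and the two places where it would have to do real work are exactly the places it leaves open. First, the gadget $H_{III}$ is never constructed and the local case analysis is never carried out; you yourself flag that ``finding such a gadget \dots will be the crux of the proof.'' This is not a small omission: as you correctly observe, once the anchor edges force their neighbors to be rich, richness constraints only propagate a bounded distance, and in a cyclically $4$-edge-connected graph (no bridges, no $2$-cuts to pin the pattern down) it is far from clear that any bounded local configuration yields a contradiction with lists of size $6$. The paper sidesteps this exact obstacle by \emph{not} using a local gadget at all: it takes an explicit infinite family $L_{2k}$ ($k\ge 5$) of cyclically $4$-edge-connected cubic graphs, puts disjoint lists $[7,12]$ and $[13,18]$ on two anchor edges $v_1v_3$, $v_2v_4$ and $[1,6]$ everywhere else, and after a short forced start the argument splits on the color of one edge: in one branch an edge is forced to be poor, and poorness then propagates \emph{globally} around the whole ladder-like structure back to the anchors, forcing $u_1v_1$ and $u_2v_2$ to share a color and contradicting the richness of $v_1v_2$; the other branch dies locally. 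The infinite family comes from the length parameter $k$, and the obstruction genuinely uses the global structure, which is precisely what your bounded-depth local analysis cannot access.

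Second, your mechanism for generating the infinite family is flawed as stated. Replacing an edge $xy$ of the host by the Petersen graph minus an edge, attaching its two degree-$2$ vertices to $x$ and $y$, creates a $2$-edge-cut consisting of the two attaching edges, and both sides of this cut contain cycles; hence the resulting graph has a cyclic $2$-edge-cut and is not cyclically $4$-edge-connected. So even granting a working gadget, this expansion step would destroy exactly the connectivity property the theorem requires; you would need a different, genuinely connectivity-preserving operation (or, as in the paper, a family in which the parameter grows inside the structure carrying the coloring obstruction itself).
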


\begin{proof}
	We will show that for a cubic graph $L_{2k}$ depicted in Figure~\ref{fig:9colors} and any $k \ge 5$,
	there is a list assignment for the edges of $L_{2k}$, for which $L_{2k}$ does not admit a list normal edge-coloring.
	Clearly, $L_{2k}$ is cyclically $4$-edge-connected.
	We use the labeling of the vertices as given in the figure.	
	\begin{figure}[htp!]		
		$$
			\includegraphics{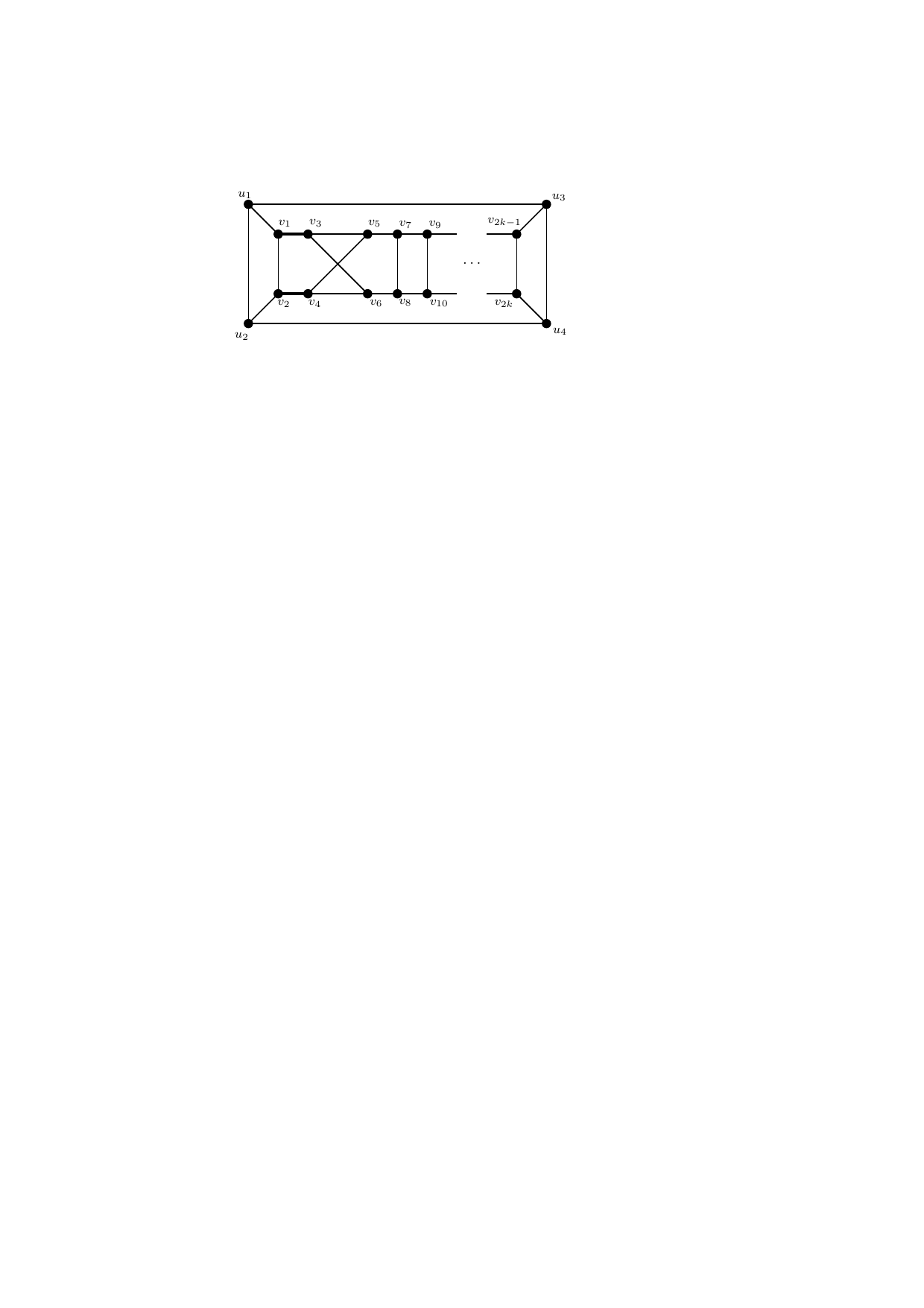}
		$$
		\caption{A cyclically $4$-edge-connected cubic graph $L_{2k}$ which is not list normal $6$-colorable 
			if the two bold edges receive lists disjoint from all other lists.}
		\label{fig:7colors-gadget}
	\end{figure}			

	Let $L$ be a list assignment for $L_{2k}$ such that for its every edge $e$, except $v_1v_3$ and $v_2v_4$,
	we have $L(e) = [1,6]$. For the two special edges, we use $L(v_1v_3) = [7,12]$ and $L(v_2v_4) = [13,18]$.
	Without loss of generality, we may assume that $v_1v_3$ is colored by $7$, and $v_2v_4$ with $13$.	
	
	Without loss of generality, we can assign colors $1$, $2$, and $3$ to the edges $v_3v_5$, $v_4v_5$, and $v_5v_7$, respectively. 
	Since the edges adjacent to the edges $v_1v_3$ and $v_2v_4$ must all be rich,	
	the edges $v_4v_6$ and $v_3v_6$ must obtain colors that were not used yet, say $4$ and $5$, respectively. 	
	Now, we consider two cases regarding the color of $v_6v_8$. 
	
	Suppose first that $v_6v_8$ is colored with $3$. 
	Then the edge $v_7v_8$ must be poor, 
	and consequently $v_7v_9$ and $v_8v_{10}$ must receive the same color.
	Now, following an analogous argument for coloring the remaining edges $v_iv_{i+1}$, $v_{i}v_{i+2}$, and $v_{i+1}v_{i+3}$, 
	for $i \in \set{9,11,\dots,2k-1}$, we infer that also $u_3u_4$ and $u_1u_2$ must be poor 
	and thus $u_1v_1$ and $u_2v_2$ must receive the same color.
	This is not possible, since $v_1v_2$ must be rich.
	
	So, $v_6v_8$ must be colored with color $6$ (the only color not used yet).
	Then, $v_7v_8$ must be rich and by the symmetry, we may assume that $v_7v_8$ is colored with $1$.
	It follows that $v_7v_9$ is colored with $2$,
	and $v_8v_{10}$ must be colored with $4$ or $5$.
	But this is not possible, since $v_6v_8$ must be rich.
	This completes the proof.
\end{proof}

\section{Conclusion}
\label{sec:conc}

One of the main results of this paper is the tight upper bound of $10$ colors for the strong chromatic index of subcubic graphs.
However, since this bound is only known to be attained by the Wagner graph and graph containing the $K_{3,3}$ with a subdivided edge as a subgraph,
one may ask whether there are other examples of such graphs, perhaps with smaller chromatic index.
\begin{question}
	Is it true that for any subcubic graph $G$ with $\chils{G} = 10$, 
	it holds that $\chis{G} = 10$?
\end{question}

As the second main result, we proved that the strong chromatic index and the list strong chromatic index differ for some graphs;
we provided an infinite family of such graphs, 
but the family only contains graphs with the minimum possible value of the strong chromatic index,
and it does not seem likely that for graphs with strong chromatic index closer to the general upper bound of $10$ colors,
their list strong chromatic index will be different.
Therefore, we propose a rather bold statement, which is in line with Conjecture~\ref{con:str8}.
\begin{conjecture}
	\label{con:liststr8}
	For any connected bridgeless subcubic graph $G$ on at least $13$ vertices, it holds that
	$$
		\chils{G} \le 8\,.
	$$	
\end{conjecture}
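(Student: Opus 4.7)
The plan is to mirror the minimum counterexample strategy used in the proof of Theorem~\ref{thm:lst10}, but working with lists of size $8$ instead of $10$. I would assume a minimum counterexample $G$---a connected bridgeless subcubic graph on at least $13$ vertices admitting a list assignment $L$ with $|L(e)| = 8$ for every edge, for which no strong $L$-edge-coloring exists. The program is to derive contradictions by sequentially excluding local configurations, as in the proof of Theorem~\ref{thm:lst10}, while respecting the extra global constraint of bridgelessness (since deleting vertices to invoke minimality may destroy this property, each reduction must either preserve bridgelessness or yield a reducible configuration by an intrinsic polynomial argument).

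First, I would re-establish the basic structural lemmas of~\cite{DaiWanYanYu18} in the list-8 regime: that $G$ is $3$-regular and contains no triangles. These should still follow from straightforward counting of forbidden colors plus Theorem~\ref{thm:hall}, although the margins are now tight. Second, I would attempt to eliminate cycles of length $4$, $5$, and $6$ by conflict graph polynomial computations analogous to Claim~\ref{cl:6cyc}; each requires locating a suitable monomial with $\coef{P_C}{m} \neq 0$ and with per-variable degree less than $8$, which I would verify using Wolfram Mathematica. I would then attempt the long-cycle reduction analogous to Claim~\ref{cl:nocyc}, using auxiliary polynomials $A_i$ stitched together along the cycle. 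If the analogous coefficient formula $(-1)^k$ persists for lists of size $8$, cycles of every length are excluded and $G$ must be acyclic, contradicting $3$-regularity.

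The hard part---and almost certainly where a naive lift of the argument will break---is precisely this long-cycle step: with only $8$ colors available, the per-variable degree bounds required by Theorem~\ref{thm:null} are much more demanding, and the natural monomials that witness reducibility for lists of size $10$ have several variables with degree $\ge 8$. So I expect the pure cycle-deletion strategy to fail, and new reducible configurations will be needed. Candidates to test include short cycles sharing an edge or a vertex (theta-subgraphs, bowties), pairs of short cycles joined by a short path, and the local neighborhoods of the two known extremal graphs (the Wagner graph and subdivided $K_{3,3}$). A discharging argument, using that $G$ has at least $13$ vertices and is bridgeless cubic, would then be needed to force the presence of at least one such reducible configuration.

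A serious caveat is that the non-list Conjecture~\ref{con:str8} is itself unresolved, and since $\chis{G} \le \chils{G}$, any proof of Conjecture~\ref{con:liststr8} automatically resolves it. The realistic route is therefore a two-stage attack: first isolate, via discharging, a finite list of reducible configurations sufficient for Conjecture~\ref{con:str8}, and then verify that each configuration remains reducible in the list setting by a Combinatorial Nullstellensatz computation. The main obstacle is that the current methods do not seem to give enough structural information even in the non-list case to carry out such a discharging argument.
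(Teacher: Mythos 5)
This statement is not proved in the paper: Conjecture~\ref{con:liststr8} is posed in the concluding section as an open problem, explicitly motivated by the (also open) non-list Conjecture~\ref{con:str8}. So there is no proof of the paper's to compare yours against, and, as you yourself concede in your final paragraph, what you have written is a research programme rather than a proof. Since $\chis{G}\le\chils{G}$, any proof along your lines would in particular settle Conjecture~\ref{con:str8}, for which, as you note, even the structural/discharging machinery is currently missing; none of the reducible configurations or discharging rules you would need are actually exhibited or verified.

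Beyond that, the specific steps you propose to recycle do break down, and in a way worth making precise. In the proof of Theorem~\ref{thm:lst10}, after deleting the cycle and extending the coloring of $G'$, a cycle edge retains at least $10-4=6$ available colors and a pendant edge at least $4$; the monomials used in Claims~\ref{cl:6cyc} and~\ref{cl:nocyc} have per-variable degrees up to $5$ on cycle edges and $3$ on pendant edges, so the hypothesis $|S_i|>k_i$ of Theorem~\ref{thm:null} is met exactly because of those counts. With lists of size $8$ the same deletions leave only at least $4$ and at least $2$ available colors respectively, so those monomials (and any monomial of the same total degree) violate the degree condition; the failure is not that some variable has degree $\ge 8$, but that the available-color counts shrink by two while the conflict degrees do not. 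Moreover, the minimum-counterexample framework itself does not transfer: the class of connected bridgeless subcubic graphs on at least $13$ vertices is not closed under the vertex deletions used in Theorem~\ref{thm:lst10}, and the statement is genuinely false for smaller bridgeless graphs (the Wagner graph and the subdivided $K_{3,3}$ have list strong chromatic index $10$, and the Petersen graph has $7$), so ``apply minimality to $G$ minus a cycle'' has no valid base to stand on without an entirely new induction scheme that controls both order and bridgelessness. In short, the proposal correctly diagnoses the obstacles but does not overcome any of them; the conjecture remains open.
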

The first step towards proving this conjecture would be proving that the list strong chromatic index
of any connected bridgeless subcubic graph, not isomorphic to the Wagner graph, is at most $9$.
Or even more specifically, finding the exact upper bounds for the list strong chromatic indices of special graph families 
such as planar graphs and bipartite graphs would also give a relevant insight into the topic.

On the other hand, we do believe that there are graphs with strong chromatic index $6$ 
and greater list strong chromatic index.
\begin{problem}
	Find an infinite family of graphs $G$ with $\chis{G} = 6$ and 
	$$
		\chils{G} > \chis{G}\,.
	$$
\end{problem}

Also, we are confident that Theorem~\ref{thm:infi} can be extended to all cubic graphs
with strong chromatic index equal to $5$.
\begin{conjecture}
	For every cubic graph $G$ with $\chis{G} = 5$ we have that $\chils{G} > 5$.
\end{conjecture}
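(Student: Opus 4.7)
The natural strategy is to combine Theorem~\ref{thm:martin} with the bad list assignment used in the proof of Theorem~\ref{thm:infi}. Let $f \colon G \to P$ be a covering projection; since $P$ has girth $5$, the restriction of $f$ to any $2$-ball is an isomorphism, so the $2$-edge-neighborhood of each edge $e$ of $G$ is mapped bijectively to that of $f(e)$ in $P$. I would pull back the Petersen-style lists: set $L(e) = \{1,2,3,4,5\}$ if $f(e)$ is an inner edge, $L(e) = \{1,2,3,4,6\}$ if $f(e)$ is a spoke, and $L(e) = \{1,2,3,5,6\}$ if $f(e)$ is an outer edge. The goal is then to show that $G$ admits no strong $L$-edge-coloring; since each list has size $5$, this yields $\chils{G} > 5$.

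Suppose for contradiction that $\varphi$ is such a coloring. For each spoke $e$ of $G$, the four inner edges in its $2$-edge-neighborhood are pairwise at distance at most $2$ and therefore carry four distinct colors from $\{1,2,3,4,5\}$; let $c_e^{\mathrm{in}}$ denote the missing one, and define $c_e^{\mathrm{out}} \in \{1,2,3,5,6\}$ and $c_e^{\mathrm{sp}} \in \{1,2,3,4,6\}$ analogously. Then $\varphi(e) = c_e^{\mathrm{sp}}$, and intersecting availability with the list $\{1,2,3,4,6\}$ forces one of the three alternatives at $e$: either $c_e^{\mathrm{sp}} = 6$ with $c_e^{\mathrm{out}} = 6$, or $c_e^{\mathrm{sp}} = 4$ with $c_e^{\mathrm{in}} = 4$, or $c_e^{\mathrm{sp}} = c_e^{\mathrm{in}} = c_e^{\mathrm{out}} \in \{1,2,3\}$. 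For the base case $G = P$ itself, all five spokes lie in a single $2$-edge-neighborhood and so use every color of $\{1,2,3,4,6\}$; exactly one spoke falls into each of the first two alternatives and the remaining three into the last. A short calculation using the ``opposite-edge'' bijections in the inner and outer $5$-cycles shows that the unique leftover inner edge and the unique leftover outer edge---both forced to be colored $5$---always sit at distance exactly $2$ in $P$, contradicting the strong $L$-edge-coloring property.

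For a general cover $G$, the local analysis above still applies at every spoke of $G$. The main obstacle is to propagate the Petersen-level contradiction to all of $G$: one has to argue that any strong $L$-edge-coloring of $G$ descends, up to a permutation of colors, to a strong $L_0$-edge-coloring of $P$, where $L_0$ is the original list assignment on $P$. The expected structural ingredient is a rigidity statement for strong $5$-edge-colorings of covers of $P$, namely that every color class is the preimage under $f$ of one of the five maximum induced matchings of $P$; this should follow from the uniqueness of the partition of $E(P)$ into those five matchings together with the ``parallel class'' property, already used implicitly in the proof of Theorem~\ref{thm:infi}, that every edge is adjacent to exactly one edge of each of the other four colors. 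Once such fiber-consistency is established, the Petersen-level obstruction transfers to $G$ and the desired contradiction follows. The remaining case of colorings that use all six colors from the union of the lists can be handled by a similar but separate analysis, exploiting that color $5$ is absent from every spoke list and color $6$ from every inner list.
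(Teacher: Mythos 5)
First, note that the statement you are proving is stated in the paper only as a conjecture: the authors prove the weaker Theorem~\ref{thm:infi} (an infinite family, built by replacing an edge of a cover by the gadget $P'$ and exploiting the counting argument with the near-coverings $C_a,C_b,C_c$) and explicitly leave the general statement for all covers of the Petersen graph open. So there is no paper proof to compare against, and your proposal would have to stand on its own. It does not, for two concrete reasons. The first is the local step: it is not true that the covering projection restricted to a $2$-edge-neighborhood preserves the ``distance at most $2$'' relation. In $P$, the $2$-edge-neighborhood of a spoke, say $u_1v_1$, contains the four inner edges $u_1u_3,u_1u_4,u_3u_5,u_2u_4$, but the pair $u_3u_5,u_2u_4$ is at distance $2$ only through the edge $u_2u_5$, which lies \emph{outside} that neighborhood; in a cover, the corresponding lifts are at distance $2$ only if the inner pentagon $u_1u_3u_5u_2u_4$ lifts to a closed $5$-cycle. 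In the Desargues graph $GP(10,3)$ (girth $6$), no pentagon lifts closed, so for every spoke-lift these two inner edges are at distance at least $3$ and may share a color. Hence your quantity $c_e^{\mathrm{in}}$ (and likewise $c_e^{\mathrm{out}}$) is not well defined in a general cover, and the trichotomy at a spoke collapses. (The claim that $f$ is an isomorphism on $2$-balls is also false as stated: the edge-$2$-ball of an edge of $P$ already spans all ten vertices and $13$ of the $15$ edges, so a proper cover cannot restrict isomorphically; only a bijection of edge sets holds.)

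The second, and more serious, gap is the ``propagation'' step, which you yourself flag as an expectation rather than a proof. A strong $L$-edge-coloring of the cover uses colors from $\{1,\dots,6\}$, so it is not a strong $5$-edge-coloring, and the rigidity you invoke (color classes being preimages of the five maximum induced matchings, as in Theorem~\ref{thm:martin} and in the proof of Theorem~\ref{thm:infi}) simply does not apply; moreover, different lifts of the same edge of $P$ may legitimately receive different colors, so the coloring need not descend to $P$ at all, with or without a permutation of colors. Dismissing ``colorings that use all six colors'' as a similar but separate analysis puts aside exactly the case that constitutes the whole difficulty of the conjecture: for the base graph $P$ one can indeed rule out such colorings (e.g.\ by the induced-matching counting used for Theorem~\ref{thm:pet7}, since any color class of size $3$ must be one of the five matchings and hence colored from $\{1,2,3\}$, while classes of size $2$ lie inside a single matching, which has odd size), but this counting does not transfer to covers, where induced matchings need not project to induced matchings. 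Until you can control the six-color colorings of an arbitrary cover directly, the argument establishes nothing beyond the cases already covered by Theorem~\ref{thm:infi}.
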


The strong edge-coloring is an important concept; the study of (sub)cubic graphs is popular as these graphs
are somewhat easier to handle than general graphs.
The properties of list strong edge-coloring for general graphs are thus also of interest.
In~\cite{LuzMacSkoSot22}, it was shown that $k$-regular graphs attaining the lowest possible value $2k-1$ of the strong chromatic index
are precisely the covers of the Kneser graphs $K(2k-1,k)$.
It seems that Theorem~\ref{thm:infi}, with some additional effort, could be extended to regular graphs of greater degree.
Along these lines, we suggest the following question.
\begin{question}
	Is it true that for a given integer $k \ge 4$, there is an infinite family of graphs $G$ of maximum degree $k$
	such that 
	$$
		\chils{G} > \chis{G}\,?
	$$
\end{question}

\bigskip
In the case of list normal edge-coloring, we have the upper bound given by Theorem~\ref{thm:lst10},
but we do not have an example of a graph attaining the bound;
in fact, we do not believe one exists.
\begin{conjecture}
	For any cubic graph $G$, it holds that 
	$$
		\chiln{G} \le 9\,.
	$$
\end{conjecture}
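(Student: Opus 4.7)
The plan is to mirror the minimal counterexample strategy used to prove Theorem~\ref{thm:lst10}, but to exploit the weaker nature of the normal constraint in order to save one color. Suppose for contradiction that there is a cubic graph $G$ together with a list assignment $L$ with $|L(e)|\ge 9$ for every $e\in E(G)$ such that $G$ admits no normal $L$-edge-coloring, and let $G$ be minimal with this property. First I would establish connectedness and $3$-regularity: a vertex of degree at most $2$ can be deleted and the coloring of the smaller graph extended, because each missing edge has at most six colors forbidden on it by its distance-$2$ neighborhood, leaving several colors with which to satisfy either the rich or the poor condition.

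Next, I would rule out short cycles of lengths $3$ through $6$, following the templates of Claims~\ref{cl:no3}--\ref{cl:6cyc}. In each reduction we delete a small vertex set from the purported counterexample, invoke minimality on the remainder, and then try to extend the resulting partial coloring. The crucial new ingredient is that a non-colored edge $e$ need not be distinct from all its distance-$2$ neighbors: it suffices that the four edges adjacent to $e$ realize either four distinct colors (rich) or exactly two (poor). To organize the extension, for each reducible configuration $H$ I would branch over all labelings $\tau \colon E(H) \to \{\mathrm{rich},\mathrm{poor}\}$ of the uncolored edges. Each fixed $\tau$ converts the normal condition into a set of pairwise equality/inequality constraints, which can be attacked either by a conflict graph polynomial via Combinatorial Nullstellensatz (Theorem~\ref{thm:null}) or by Hall's Marriage Theorem (Theorem~\ref{thm:hall}). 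The guiding heuristic is to insist on rich labels almost everywhere, which mimics a strong coloring, so that the analogous Nullstellensatz computation from the proof of Theorem~\ref{thm:lst10} applies with one fewer color; the poor label is reserved for those few boundary edges where rich cannot be achieved, providing exactly the missing slack because poor edges enforce color repetitions and therefore lower the effective number of colors required.

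The principal obstacle is the analogue of Claim~\ref{cl:nocyc}, ruling out long cycles. In the strong setting this was closed by a recursive monomial coefficient computation along the cycle. For normal coloring the rich-or-poor disjunction breaks the clean polynomial structure, and the recursive argument must be carried out separately for each rich/poor labeling of the pendant and cycle edges, with a combinatorial check that the chosen labeling is globally realizable. I expect this step to be the technical bottleneck: one must exhibit, for every long cycle, a labeling $\tau$ for which some monomial of the corresponding conflict polynomial has nonzero coefficient at degrees matching $|L(e)|-1=8$, and show that the chosen rich/poor pattern is compatible with the already-colored exterior. Once every cycle is excluded, a $3$-regular graph cannot exist, contradicting the choice of $G$ and establishing the bound $\chiln{G}\le 9$.
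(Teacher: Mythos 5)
This statement is posed in the paper as an open conjecture; the paper offers no proof of it, and the best bound it actually establishes is $\chiln{G}\le 10$ (Corollary~\ref{cor:lstnor}, which follows from Theorem~\ref{thm:lst10} simply because a strong edge-coloring makes every edge rich). So your task was to prove something the authors themselves could not, and your proposal does not close the gap: it is a plan with the hardest steps explicitly deferred (``I expect this step to be the technical bottleneck''), not a proof.

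Beyond being incomplete, the plan has two concrete flaws. First, the minimal-counterexample deletion scheme does not transfer from strong to normal coloring, because normality is not preserved when edges are added back: after coloring $G'=G\setminus S$, the already-colored edges incident to the boundary acquire new neighbors once $S$ is restored, and the colors you place on the new edges must also keep those \emph{old} edges rich or poor. Your reductions only track constraints on the uncolored edges, so the extension step is unsound; moreover, in $G'$ some vertices have degree less than $3$, so boundary edges cannot even be rich there and the induction hypothesis does not apply as stated. Second, the heuristic that the poor option ``saves one color'' is exactly what an adversarial list assignment destroys: poor forces an \emph{equality} between colors of two edges, which is impossible when their lists are disjoint, and neither Hall's theorem nor the Combinatorial Nullstellensatz in the form used in the paper handles equality constraints. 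The paper's own Theorem~\ref{thm:nor9} builds cubic graphs with $\chiln{G}\ge 9$ precisely by giving certain edges pairwise disjoint lists so that poorness is unattainable and every nearby edge is forced to be rich --- i.e., forced back into the strong-coloring regime where $9$ colors need not suffice locally. Any correct proof of the conjecture must engage with this phenomenon; your proposal does not.
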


Conjecture~\ref{con:Nor} assumes only bridgeless cubic graphs. 
We showed in Theorem~\ref{thm:nor8} that in the list version, 
there are bridgeless cubic graphs with list normal chromatic index at least $8$.
In the proof of the theorem, we only provided one graph of order $12$. 
However, we are only aware of two other graphs with list normal chromatic index at least~$8$; namely the Wagner graph 
and the graph obtained from $K_{3,3}$ in which one vertex is truncated (see Figure~\ref{fig:wagner}).
We also remark here without a proof that with some additional effort, 
one can show that the list normal chromatic index of the Wagner graph is equal to $8$.
\begin{figure}[htp!]		
	$$
		\includegraphics{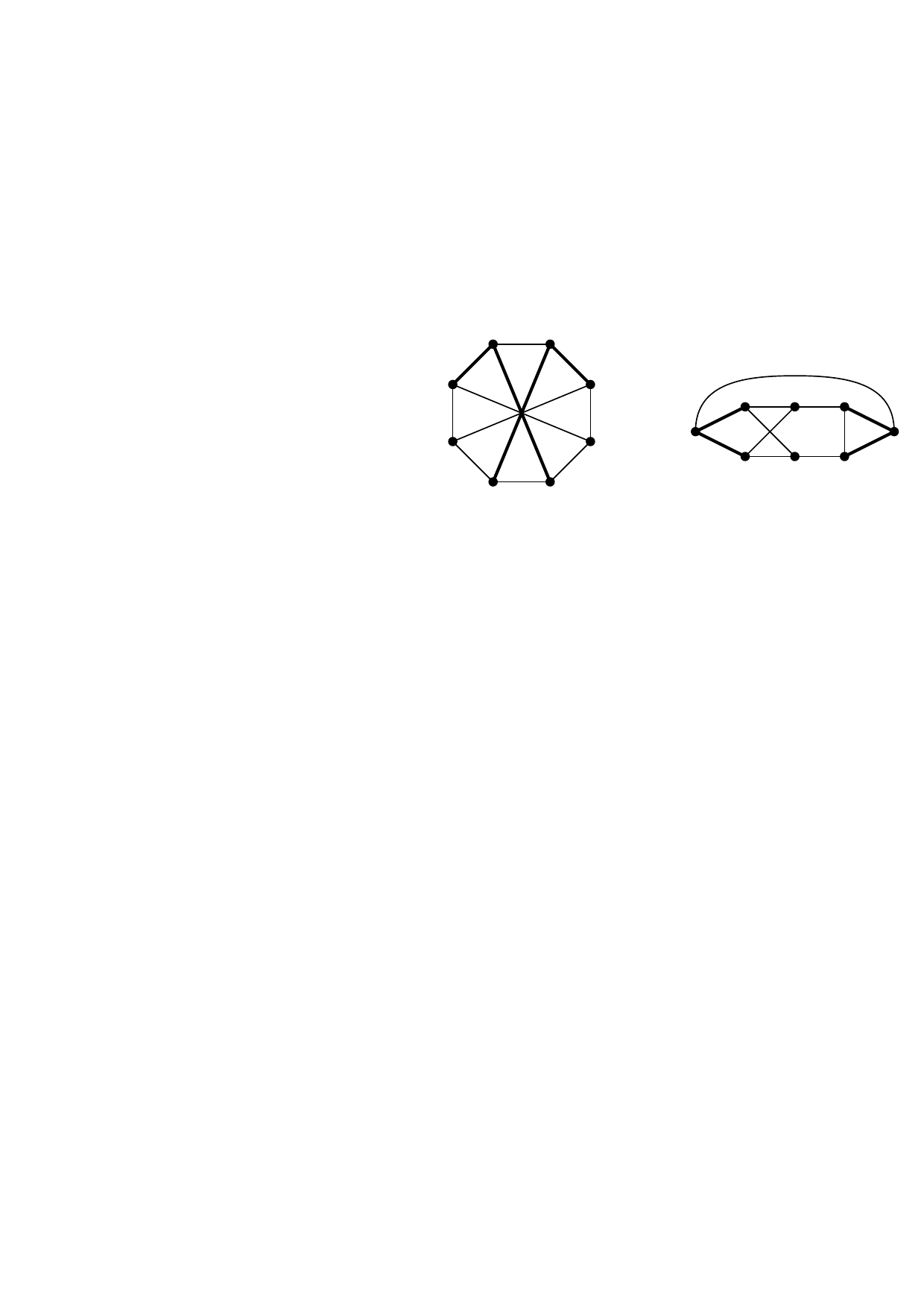}
	$$
	\caption{Two bridgeless cubic graphs with list strong chromatic index at least $8$.
		If the bold edges receive unique lists of seven colors and all the other edges the same lists of seven colors,
		then one can not realize a list normal edge-coloring.}
	\label{fig:wagner}
\end{figure}

Based on our results and additional computer tests on small graphs, 
we confidently propose also the following.
\begin{conjecture}
	\label{con:listnor7}
	For any connected bridgeless cubic graph $G$ on at least $14$ vertices, it holds that
	$$
		\chiln{G} \le 7\,.
	$$	
\end{conjecture}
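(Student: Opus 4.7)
The plan is a minimum-counterexample attack in the spirit of the proof of Theorem~\ref{thm:lst10}. Assume $G$ is a connected bridgeless cubic graph on at least $14$ vertices with $\chiln{G} \geq 8$, chosen with $|V(G)|$ minimum, and fix an adversarial list assignment $L$ with $|L(e)| = 7$ for every edge witnessing this. The first task is to establish a substantial girth lower bound on $G$. For each short cycle $C$ of length up to some threshold, one removes $V(C)$, normally list-colours the remainder by minimality, and attempts to extend the colouring. Unlike the strong edge-colouring setting, however, each uncoloured edge $e$ must satisfy a disjunction (rich or poor), so a single conflict-graph polynomial cannot be written down directly.

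My proposed workaround is to pre-label each uncoloured edge with a tentative type (rich or poor), producing a purely conjunctive system: distance-$1$ properness, plus for each edge of type rich the requirement that its four neighbouring edges receive four distinct colours, and for each edge of type poor the requirement that pairs of neighbouring edges coincide in the appropriate pattern. For every fixed type assignment this yields an explicit conflict polynomial to which Combinatorial Nullstellensatz (Theorem~\ref{thm:null}) or Hall's criterion (Theorem~\ref{thm:hall}) can be applied. One then has to show that, regardless of the precolouring of $G \setminus V(C)$, at least one type assignment admits a valid extension. With some luck, this reduction handles all cycles of length up to $5$ (and perhaps $6$), forcing $G$ to have girth at least $6$.

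Once a girth lower bound of $6$ is established, the Moore-type bound for cubic graphs takes over: a bridgeless cubic graph of girth at least $6$ has at least $14$ vertices, with equality attained only by the Heawood graph. Combined with the hypothesis $|V(G)| \geq 14$, the counterexample $G$ would have to be the Heawood graph or a bridgeless cubic graph of girth at least $6$ on more than $14$ vertices. The Heawood graph can be handled directly by explicit case analysis or computer verification of $\chiln \leq 7$, and any larger graph of girth at least $6$ should be amenable to a discharging argument that locates a further reducible configuration, closing the induction.

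The hard part will be the reducibility step. Encoding the rich-or-poor disjunction forces an exponential case split over type assignments on the uncoloured edges, and within each case one must still control how the precolouring of the boundary interacts with both the rich and the poor constraints. The small exceptions exhibited in the paper, namely the Wagner graph, the truncated $K_{3,3}$, and the $12$-vertex graph from Theorem~\ref{thm:nor8}, show that $7$ colours genuinely fail below the $14$-vertex threshold; a successful reducibility analysis must therefore be sharp enough to \emph{exclude precisely} these configurations while admitting every bridgeless cubic graph on at least $14$ vertices. This tightness, together with the disjunctive nature of the normal condition, is what makes the conjecture delicate.
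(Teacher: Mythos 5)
The statement you are trying to prove is presented in the paper as Conjecture~\ref{con:listnor7}: the authors give no proof of it (it is supported only by the lower-bound constructions of Section~\ref{sec:norlow} and by computer tests on small graphs), so there is no argument of theirs to compare against --- and your proposal does not supply one either. As written it is a research programme, not a proof: every load-bearing step is left open. No cycle length is actually shown reducible, no conflict polynomial is written down, no coefficient is computed, and the phrase ``with some luck, this reduction handles all cycles of length up to $5$'' is an aspiration rather than an argument. Worse, the central technical obstacle you identify is not resolved by your workaround. Fixing a rich/poor type for each uncoloured edge does not produce a system of purely ``distinct colours'' constraints: a poor edge forces certain pairs of neighbouring edges to receive \emph{equal} colours, and equality constraints are not expressible as the non-vanishing of a product of differences, so neither Theorem~\ref{thm:null} in the form the paper uses it nor Theorem~\ref{thm:hall} applies off the shelf. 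One would have to identify variables and track how the (possibly different) lists intersect, and you give no argument that any of the exponentially many type assignments survives every precolouring of the boundary --- which is exactly the hard disjunctive structure that distinguishes normal colouring from strong colouring.

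The minimum-counterexample framework itself is also unsound as stated. Deleting the vertices of a short cycle from a cubic graph leaves vertices of degree~$2$, so the reduced graph is not cubic, and $\chiln{\cdot}$ is defined in the paper only for cubic graphs; the deletion can moreover create bridges and can drop the order below $14$, and the paper's own examples (the Wagner graph, the truncated $K_{3,3}$, and the $12$-vertex graph of Theorem~\ref{thm:nor8}) show that the conclusion genuinely fails for some smaller bridgeless cubic graphs, so you cannot invoke minimality on the reduced graph unless you first show it stays inside the conjectured class --- your induction hypothesis does not transfer. Finally, even granting a girth-$6$ lower bound, the endgame ``any larger graph of girth at least $6$ should be amenable to a discharging argument that locates a further reducible configuration'' is simply the open conjecture restated: no reducible configuration is exhibited and no discharging rules are given. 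The Moore-bound observation (girth~$6$ forces at least $14$ vertices, with the Heawood graph as the unique extremal case) is correct, but it carries none of the weight; the substance of the conjecture remains untouched.
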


As opposed to the normal edge-coloring, in its list version, 
the property of being a class I graph does not resolve the problem trivially. 
In fact, it seems that the following are highly non-trivial questions.
\begin{question}
	What is the tight upper bound for the list normal chromatic index of a cubic graph, which is:
	\begin{enumerate}[(a)]
		\item{} (bridgeless) planar;
		\item{} class I;
		\item{} bipartite;
		\item{} with girth at least $C$, for some large enough constant $C$;
		\item{} cyclically $k$-edge-connected, for some integer $k$?
	\end{enumerate}
\end{question}

\paragraph{Acknowledgement.} 
B.~Lu\v{z}ar was supported by the Slovenian Research and Innovation Agency Program P1--0383 and the projects J1--3002 and J1--4008.
E. M\'{a}\v{c}ajov\'{a} was partially supported by the Slovak Research and Development Agency under the contract APVV--23--0076.
R. Sot\'{a}k and D. \v{S}vecov\'{a} were supported by the Slovak Research and Development Agency under the contracts No. APVV--19--0153 and APVV--23--0191, 
and the VEGA Research Grant 1/0574/21.
	
\bibliographystyle{plain}
{
	\bibliography{References}

\begin{thebibliography}{10}

\bibitem{Alo99}
N.~Alon.
\newblock {Combinatorial Nullstellensatz}.
\newblock {\em Combin. Probab. Comput.}, 8(1--2):7--29, 1999.

\bibitem{And92}
L.~D. Andersen.
\newblock The strong chromatic index of a cubic graph is at most 10.
\newblock {\em Discrete Math.}, 108:231--252, 1992.

\bibitem{BonDelLanPos24}
M.~Bonamy, M.~Delcourt, R.~Lang, and L.~Postle.
\newblock {Edge-colouring graphs with local list sizes}.
\newblock {\em J. Combin. Theory Ser. B}, 165:68--96, 2024.

\bibitem{CheHuYuZho19}
M.~Chen, J.~Hu, X.~Yu, and S.~Zhou.
\newblock {List strong edge coloring of planar graphs with maximum degree 4}.
\newblock {\em Discrete Math.}, 342:1471--1480, 2019.

\bibitem{DaiWanYanYu18}
T.~Dai, G.~Wang, D.~Yang, and G.~Yu.
\newblock {Strong list-chromatic index of subcubic graphs}.
\newblock {\em Discrete Math.}, 341:3434--3440, 2018.

\bibitem{Erd88}
P.~Erd{\H{o}}s.
\newblock {Problems and Results in Combinatorial Analysis and Graph Theory}.
\newblock In J.~Akiyama, Y.~Egawa, and H.~Enomoto, editors, {\em Graph Theory
  and Applications}, volume~38 of {\em Annals of Discrete Mathematics}, pages
  81--92. Elsevier, 1988.

\bibitem{FabLuzSotSve24}
I.~Fabrici, B.~Lu\v{z}ar, R.~Sot\'{a}k, and D.~\v{S}vecov\'{a}.
\newblock {Normal $6$-edge-colorings of cubic graphs with oddness $2$}, 2024.
\newblock Manuscript.

\bibitem{FauGyaSchTuz90}
R.~J. Faudree, A.~Gy\'{a}rf\'{a}s, R.~H. Schelp, and Z.~Tuza.
\newblock The strong chromatic index of graphs.
\newblock {\em Ars Combin.}, 29B:205--211, 1990.

\bibitem{FerMazMkr20}
L.~Ferrarini, G.~Mazzuoccolo, and V.~Mkrtchyan.
\newblock {Normal 5-edge-colorings of a family of Loupekhine snarks}.
\newblock {\em AKCE Int. J. Graphs Comb.}, 17:720--724, 2020.

\bibitem{Ful71}
D.~R. Fulkerson.
\newblock {Blocking and anti-blocking pairs of polyhedra}.
\newblock {\em Math. Program.}, 1(1):168--194, 1971.

\bibitem{HagSte13}
J.~H\"{a}gglund and E.~Steffen.
\newblock {Petersen-colorings and some families of snarks}.
\newblock {\em Ars Math. Contemp.}, 7:161--173, 2013.

\bibitem{Hal35}
P.~Hall.
\newblock {On Representatives of Subsets}.
\newblock {\em J. Lond. Math. Soc.}, s1--10(1):26--30, 1935.

\bibitem{Has22}
M.~Hasanvand.
\newblock {The List Square Coloring Conjecture fails for bipartite planar
  graphs and their line graphs}.
\newblock 2025.
\newblock ArXiv Preprint no. 2211.00622.

\bibitem{HorQinTro93}
P.~Hor{\'a}k, H.~Qing, and W.~T. Trotter.
\newblock Induced matchings in cubic graphs.
\newblock {\em J.~Graph Theory}, 17:151--160, 1993.

\bibitem{HorWoz12}
M.~Hor{\v{n}}{\'a}k and M.~Wo{\'z}niak.
\newblock On neighbour-distinguishing colourings from lists.
\newblock {\em Discrete Math. Theor. Comput. Sci.}, 14:21--28, 2012.

\bibitem{HurJoaKan21}
E.~Hurley, R.~de~Joannis~de Verclos, and R.~J. Kang.
\newblock {An Improved Procedure for Colouring Graphs of Bounded Local
  Density}.
\newblock In {\em Proceedings of the Thirty-Second Annual ACM-SIAM Symposium on
  Discrete Algorithms}, SODA '21, pages 135--148, USA, 2021. Society for
  Industrial and Applied Mathematics.

\bibitem{Mat24}
Wolfram~Research{,} Inc.
\newblock {Mathematica, {V}ersion 14.0}.
\newblock Champaign, IL, 2024.

\bibitem{Jae85b}
F.~Jaeger.
\newblock {A Survey of the Cycle Double Cover Conjecture}.
\newblock In B.R. Alspach and C.D. Godsil, editors, {\em Annals of Discrete
  Mathematics (27): Cycles in Graphs}, volume 115 of {\em North-Holland
  Mathematics Studies}, pages 1--12. North-Holland, 1985.

\bibitem{Jae85}
F.~Jaeger.
\newblock On five-edge-colorings of cubic graphs and nowhere-zero flow
  problems.
\newblock {\em Ars Combin.}, 20B:229--244, 1985.

\bibitem{Jae88}
F.~Jaeger.
\newblock Nowhere-zero flow problems.
\newblock In {\em Selected topics in graph theory}, pages 71--95. Academic
  Press, 1988.

\bibitem{JenTof95}
T.R. Jensen and B.~Toft.
\newblock {\em {Graph Coloring Problems}}.
\newblock Wiley Series in Discrete Mathematics and Optimization. Wiley, New
  York, 1995.

\bibitem{KarMacZer23}
F.~Kardo\v{s}, E.~M\'{a}\v{c}ajov\'{a}, and J.~P. Zerafa.
\newblock {Disjoint odd circuits in a bridgeless cubic graph can be quelled by
  a single perfect matching}.
\newblock {\em J. Combin. Theory Ser. B}, 160:1--14, 2023.

\bibitem{Kie00}
H.A. Kierstead.
\newblock On the choosability of complete multipartite graphs with part size
  three.
\newblock {\em Discrete Math.}, 211:255--259, 1 2000.

\bibitem{KimPar15}
S.~J. Kim and B.~Park.
\newblock {Counterexamples to the list square coloring conjecture}.
\newblock {\em J. Graph Theory}, 78:239--247, 2015.

\bibitem{KosWoo01}
A.~V. Kostochka and D.~R. Woodall.
\newblock {Choosability conjectures and multicircuits}.
\newblock {\em Discrete Math.}, 240(1):123--143, 2001.

\bibitem{LuzMacSkoSot22}
B.~Lu\v{z}ar, E.~M\'{a}\v{c}ajov\'{a}, M.~\v{S}koviera, and R.~Sot\'{a}k.
\newblock {Strong edge colorings of graphs and the covers of Kneser graphs}.
\newblock {\em J. Graph Theory}, 100:686--697, 2022.

\bibitem{MazMkr20b}
G.~Mazzuoccolo and V.~Mkrtchyan.
\newblock {Normal 6-edge-colorings of some bridgeless cubic graphs}.
\newblock {\em Discrete Appl. Math.}, 277:252--262, 2020.

\bibitem{MazMkr20}
G.~Mazzuoccolo and V.~Mkrtchyan.
\newblock {Normal edge-colorings of cubic graphs}.
\newblock {\em J. Graph Theory}, 94:75--91, 5 2020.

\bibitem{PanHuoCheWan21}
X.~Pang, J.~Huo, M.~Chen, and W.~Wang.
\newblock {A note on strong edge choosability of toroidal subcubic graphs}.
\newblock {\em Australas. J. Combin.}, 79:284--294, 2021.

\bibitem{SedSkr24b}
J.~Sedlar and R.~\v{S}krekovski.
\newblock {Normal 5-edge-coloring of some snarks superpositioned by Flower
  snarks}.
\newblock {\em European J. Combin.}, 122:104038, 2024.

\bibitem{SedSkr24}
J.~Sedlar and R.~\v{S}krekovski.
\newblock {Normal 5-edge-coloring of some snarks superpositioned by the
  Petersen graph}.
\newblock {\em Appl. Math. Comput.}, 467:128493, 2024.

\bibitem{ZhaChaHuMaYan20}
B.~Zhang, Y.~Chang, J.~Hu, M.~Ma, and D.~Yang.
\newblock {List strong edge-coloring of graphs with maximum degree 4}.
\newblock {\em Discrete Math.}, 343:111854, 2020.

\end{thebibliography}
}

\end{document}